\numberwithin{equation}{section}
\newtheorem{Th}{Theorem}
\newtheorem{coro}{Corollary}
\newtheorem{Prop}{Proposition}
\newtheorem{lem}{Lemma}
\theoremstyle{remark}
\newtheorem{Remark}{Remark}
\newcommand{\N}{\mathbb{N}}
\newcommand{\Z}{\mathbb{Z}}
\newcommand{\Q}{\mathbb{Q}}
\newcommand{\R}{\mathbb{R}}
\renewcommand{\C}{\mathbb{C}}
\newcommand{\K}{\mathbb{K}}
\renewcommand{\L}{\mathbb{L}}
\newcommand{\Qbar}{\overline{\mathbb Q}}
\newcommand{\etoile}{^\star}
\newcommand{\Card}{{\rm Card}}
\newcommand{\unp}{\{1,\ldots,p\}}
\newcommand{\unq}{\{1,\ldots,q\}}
\newcommand{\unmu}{\{1,\ldots,\mu\}}
\newcommand{\unrhoplusell}{\{1,\ldots,\roro+\ell\}}
\newcommand{\eps}{\varepsilon}
\newcommand{\Li}{{\rm Li}}
\newcommand{\dd}{{\rm d}}
\newcommand{\tra}{ ^t \!}
\newcommand{\calY}{{\mathcal Y}}
\newcommand{\calN}{{\mathcal N}}
\newcommand{\OK}{{\mathcal O}_{\K}}
\newcommand{\ppP}{\kappa} 
\newcommand{\qqQ}{K}
\newcommand*{\house}[1]{%
 \mathord{%
 \mathpalette\@house{#1}%
 }%
}
\newcommand*{\@house}[2]{%
 \dimen@=\fontdimen8 %
 \ifx#1\scriptscriptstyle\scriptscriptfont
 \else\ifx#1\scriptstyle\scriptfont
 \else\textfont\fi\fi
 3 %
 \sbox0{%
 $#1%
 \vrule width\dimen@\relax
 \overline{%
 \kern2\dimen@
 \begingroup % to keep changes of \dimen@ in #2 local
 #2%
 \endgroup
 \kern2\dimen@
 }%
 \vrule width\dimen@\relax
 \mathsurround=1.5\dimen@ % outside margin
 $%
 }%
 \ht0=\dimexpr\ht0-\dimen@\relax
 \dp0=\dimexpr\dp0+2\dimen@\relax
 \vbox{%
 \kern\dimen@ % reinsert previously removed space
 \copy0 %
 }%
}
\newcommand{\lun}{\ell_1}
\renewcommand{\Re}{{\rm Re}\, }
\renewcommand{\Im}{{\rm Im}\, }
\newcommand{\nun}{m}
\newcommand{\ev}{{\rm ev}}
\newcommand{\ord}{{\rm ord}}
\newcommand{\hz}{{\mathcal H}_0}
\newcommand{\hal}{{\mathcal H}_\alpha}
\newcommand{\calW}{{\cal W}}
\newcommand{\bfP}{{\bf P}}
\newcommand{\Pti}{\widetilde P}
\newcommand{\yti}{\widetilde y}
\newcommand{\unS}{\{1,\ldots,S\}}
\newcommand{\Yun}{Y^{\{2,u_0,s_0\}}}
\newcommand{\Yde}{Y^{\{ 1, \ppp \}}}
\newcommand{\Ydeun}{Y^{\{ 1,1 \}}}
\newcommand{\Yderoro}{Y^{\{ 1,\roro \}}}
\newcommand{\petityde}{y^{\{ 1, \ppp \}}}
\newcommand{\ppp}{p}
\newcommand{\lamun}{\lambda_{2,u_0,s_0}}
\newcommand{\lamunun}{\lambda_{2,u_0,1}}
\newcommand{\lamde}{\lambda_{1,\ppp}}
\newcommand{\taual}{\tau_\alpha}
\newcommand{\omal}{\vari_\alpha}
 \newcommand{\omun}{\vari_1}
 \newcommand{\omz}{\vari_0}
 \newcommand{\vari}{{\rm var}}
\newcommand{\roro}{\varrho}
\newcommand{\cstunnv}{c'_{1}}
\newcommand{\eneq}{\end{equation}}
\newcommand{\cstun}{c_1}
\newcommand{\cstnum}{c_2}
\newcommand{\csttr}{c_3}
\newcommand{\rk}{{\rm rk}}
\newcommand{\hol}{{\mathcal H}}
\newcommand{\calB}{{\mathcal{B}}}
\newcommand{\loga}{{\mathcal L}_\alpha}
\newcommand{\otr}{\varpi_1}
\newcommand{\AAA}{E}
\newcommand{\unqmro}{\{1,\ldots,q-\roro\}}
\newcommand{\Deltati}{\widetilde \Delta}
\newcommand{\cale}{\mathcal{E}}
\newcommand{\evalpha}{{\rm ev}_\alpha}
\newcommand{\calYti}{\widetilde{\mathcal Y}}
\newcommand{\Mti}{\widetilde{M}}
\newcommand{\cz}{\C[z]}
\newcommand{\capade}[1]{\kappa(#1)}
\newcommand{\capaf}{\capade{f}}
\newcommand{\capazero}{\kappa_0}
\newcommand{\dpr}{{\cal D}}
\newcommand{\Deltapr}{\Delta}
\begin{document}

\title{Linear independence of values of $G$-functions, II. Outside the disk of convergence}
\author{S. Fischler and T. Rivoal}
\date\today

 \selectlanguage{english}

\maketitle

\begin{abstract} Given any non-polynomial $G$-function $F(z)=\sum_{k=0}^\infty A_kz^k$ of radius of convergence $R$ and in the kernel a $G$-operator $L_F$, we consider the $G$-functions 
$F_n^{[s]}(z)=\sum_{k=0}^\infty \frac{A_k}{(k+n)^s}z^k$ for every integers $s\ge 0$ and $n\ge 1$. These functions can be analytically continued to a domain $\mathcal{D}_F$ star-shaped at $0$ and containing the disk $\{\vert z\vert <R\}$.

Fix any $\alpha \in\mathcal D_F \cap \Qbar^*$, not a singularity of $L_F$, and any number field $\K$ containing $\alpha$ and the $A_k$'s. Let $\Phi_{\alpha, S}$ be the $\K$-vector space spanned by the values $F_n^{[s]}(\alpha)$, $n\ge 1$ and $0\le s\le S$. We prove that $u_{\K,F}\log(S)\le \dim_\K(\Phi_{\alpha, S })\le v_FS$ for any $S$, for some constants $u_{\K,F}>0$ and $v_F>0$. 
This appears to be the first Diophantine result for values of $G$-functions evaluated outside their disk of convergence.

This theorem encompasses a previous result of the authors in [{\em Linear independence of values of G-functions}, 46 pages, J. Europ. Math. Soc., to appear], where $\alpha\in \Qbar^*$ was assumed to be such that $\vert \alpha\vert <R$. 
Its proof relies on an explicit construction of a Pad\'e approximation problem adapted to certain non-holomorphic functions associated to $F$, and it is quite different of that in the above mentioned paper. It makes use of results of Andr\'e, Chudnovsky and Katz on $G$-operators, of a   linear independence criterion \`a la Siegel over number fields, and of a far reaching generalization of Shidlovsky's lemma 
built upon the approach of Bertrand-Beukers  and Bertrand.
\end{abstract}

\section{Introduction}

Siegel~\cite{siegel} defined the class of $G$-functions to generalize the Diophantine properties of the logarithmic function, by opposition 
to the exponential function which he generalized with the class of $E$-functions. 
A series $F(z)=\sum_{k=0}^\infty A_k z^k \in \Qbar[[z]]$ is a $G$-function if the following three conditions are met (we fix an embedding of $\Qbar$ into $\mathbb C$):

1. There exists $C>0$ such that for any $\sigma \in \textup{Gal}(\Qbar/\mathbb Q)$ and any $k\ge 0$, $\vert \sigma(A_k)\vert \le C^{k+1}$.

2. Define $D_n$ as the smallest positive integer such that $D_nA_k$ is an algebraic integer for any $k\le n$. There exists 
$D>0$ such that for any $n\ge 0$, $D_n\le D^{n+1}$.

3. $F(z)$ is a solution of a linear differential equation with coefficients in $\Qbar(z)$ (holonomicity or $D$-finiteness).

\noindent A power series $\sum_{k=0}^\infty \frac{A_k}{k!} z^k$ is an $E$-function if, and only if, $\sum_{k=0}^\infty A_k z^k$ is a $G$-function. Algebraic functions over $\Qbar(z)$ and holomorphic at $z=0$ are $G$-functions. Transcendental $G$-functions include the polylogarithms 
$\Li_s(z)=\sum_{k=1}^\infty \frac{z^k}{k^s}$ for every integer $s\geq 1$, multiple polylogarithms, and 
the generalized hypergeometric series ${}_{p+1}F_p$ with rational parameters. The exponential function, Bessel's functions and 
the generalized hypergeometric series ${}_{p}F_p$ with rational parameters are examples of $E$-functions. 

The Diophantine theory of $E$-functions is well developped and complete in some sense; we refer the reader to \cite{Shidlovski} and the introduction of \cite{ar} for statements of classical transcendence and algebraic independence theorems for values of $E$-functions at algebraic points. A complete picture is not yet known for $G$-functions, for which transcendence results are very sparse, and in fact it is in general not even known whether interesting $G$-values like $\Li_5(1)=\zeta(5)$ are irrational or not. We refer the reader to the introduction of \cite{fns} for statements of classical Diophantine results on values of $G$-functions at algebraic points. The most general of them, due to Chudnovsky, proves in particular the irrationality of the value $F(\alpha)$ of any given $G$-function $F$ evaluated at an algebraic point $\alpha$ sufficiently close to the origin (and this depends on $F)$, and in particular very far from the circle of convergence of $F$. 

In \cite{fns}, we adopted a dual point of view: for any given $G$-function $F$ and any algebraic point $\alpha$ in the disk of convergence, we obtained a non-trivial estimate for the dimension of a certain vector space spanned by the values at $\alpha$ of a family of $G$-functions naturally associated to $F$. The goal of the present paper is to generalize the main result of \cite{fns} to values of $F$ {\em outside} the disk of convergence.

To begin with, we recall that any $G$-function is solution of a minimal non-zero differential equation over $\Qbar(z)$, of order $\mu$ say. A result due to Andr\'e, Chudnovsky and Katz (see \cite[p. 719]{andre}) asserts that this minimal equation has very specific properties, which will be used in the sequel: it is Fuchsian with rational exponents, and at any point $\alpha\in \Qbar \cup \{\infty\}$ it has a basis of solutions of the form $\big(f_1(z-\alpha), \ldots, f_\mu(z-\alpha)\big)\cdot (z-\alpha)^A$, where $A\in M_{\mu\times \mu}(\mathbb Q)$ is triangular superior, and the $f_j(z)\in \Qbar[z]$ are $G$-functions (if $\alpha=\infty$, $z-\alpha$ must be replaced by $1/z$). Such a non-zero minimal equation is called a $G$-operator.

\bigskip

To state our result, we introduce some notations. Starting from a $G$-function $F(z)=\sum_{k=0}^\infty A_k z^k$ with radius of convergence $R$, we define for any integers $n\ge 1$ and $s\ge 0$ the $G$-functions 
\begin{equation*}
F_n^{[s]}(z)=\sum_{k=0}^\infty \frac{A_k}{(k+n)^s}z^{k+n}
\end{equation*}
which all have $R$ as radius of convergence. Being an holonomic function, $F$ can be continued to a suitable cut plane. More precisely, let $\Sigma_F$ denote the set of finite singularities of $F$, which are either poles or branch points. For $\alpha\in \Sigma_F$, we define $\Deltapr_\alpha:=\alpha +e^{i\arg(\alpha)} \mathbb R^+$, the straight line ``from $\alpha$ to $\infty$'' whose direction goes through 0 but $0\notin \Deltapr_{\alpha}$. Then, $F$ and the $F_n^{[s]}$ can all be analytically continued to the domain $\mathcal{D}_F:=\mathbb C \setminus (\cup_{\alpha \in \Sigma_F} \Deltapr_\alpha)$, which is star-shaped at $0$ and contains the disk $\{\vert z\vert<R\}$.

\begin{Th}\label{thintroun}
Let $\K$ be a number field, and $F$ be a $G$-function with Taylor coefficients in $\K$, such that $F(z)\not\in\C[z]$. Let $L_F\in\K[z,\frac{\dd }{\dd z}]$ be a $G$-operator such that $L_F F=0$. Let $z_0\in\K\setminus\{0\}$; assume that $z_0$ is not a singularity of $L_F$ and that $z_0\in \mathcal{D}_F$.

Then for any $S$, the $\K$-vector space spanned by the numbers $F_n^{[s]}(z_0)$ with $n \geq 1$ and $0 \leq s \leq S$ has dimension over $\K$ at least $\frac{1+o(1)}{[\K:\Q]C(F)} \log(S)$, where $C(F)$ depends only on $F$; here $o(1)$ denotes a sequence that tends to 0 as $S\to\infty$.
\end{Th}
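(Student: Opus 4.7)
The plan is to combine a Pad\'e-type construction at the origin, analytic continuation along a path to $z_0$, a generalized Shidlovsky-type lemma to secure non-degeneracy, and a linear independence criterion à la Siegel over $\K$. This is the same overall strategy as in \cite{fns} for the case $|z_0|<R$, but evaluating outside the disk of convergence introduces genuine new difficulties and forces the Pad\'e problem to be adapted to auxiliary non-holomorphic functions that appear only after analytic continuation.

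The first step is to produce, by Siegel's lemma together with the $G$-function arithmetic of $F$ (common denominators $D_n\leq D^{n+1}$ and archimedean size $|A_k|\leq C^{k+1}$), a large family of linear forms
\begin{equation*}
R_j(z) \; = \; \sum_{n,s} P_{n,s,j}(z)\, F_n^{[s]}(z) \qquad (1\leq j\leq J),
\end{equation*}
with polynomials $P_{n,s,j}\in\OK[z]$ of controlled degree and height, each $R_j$ vanishing at $z=0$ to very high order. The parameters $n\leq N$, $s\leq S$, $\deg P_{n,s,j}$, and $J$ are tuned so that the auxiliary Pad\'e system is just barely overdetermined, which is what ultimately drives the $\log(S)$ lower bound.

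Next, one analytically continues each $R_j$ along a path $\gamma\subset\calD_F$ from $0$ to $z_0$. Here Andr\'e-Chudnovsky-Katz enters crucially: since $L_F$ is a $G$-operator, its local solutions at every finite singularity take the form (vector of $G$-functions)$\cdot(z-\alpha)^A$ with $A$ upper triangular rational, and the monodromies are quasi-unipotent. Consequently the continuations of the $F_n^{[s]}$ lie in a finite-dimensional $\K$-space generated by the $F_n^{[s]}$ themselves together with additional multivalued, ``non-holomorphic'' companions produced by the variation operators at the singularities of $L_F$ swept by deforming $\gamma$ towards $0$. It is to this enlarged system that the Pad\'e problem in the first step must really be adapted.

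The heart of the argument is then a Shidlovsky-type non-vanishing lemma for this enlarged system, in the spirit of Bertrand-Beukers and Bertrand. It guarantees that enough of the values $R_j(z_0)$ produce non-trivial, sufficiently small linear combinations of the $F_n^{[s]}(z_0)$ (together with the values of the monodromy companions, which must be absorbed or bounded) so that a Siegel-style linear independence criterion over $\K$ outputs the claimed lower bound $\frac{1+o(1)}{[\K:\Q]C(F)}\log(S)$. The main obstacle is precisely this Shidlovsky-type step: the classical version controls holonomic systems near a regular point, whereas here one must work with the much larger multivalued envelope generated by the monodromy orbit of the $F_n^{[s]}$ along $\gamma$, and prove that the Pad\'e determinants built from Taylor data at $0$ do not degenerate when read off in a basis at $z_0$.
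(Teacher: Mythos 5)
The overall skeleton you propose (Pad\'e-type construction, a generalized Shidlovsky lemma, a Siegel-style criterion over $\K$) matches the paper, but there is a genuine gap at the central point: you give no mechanism to bound $|R_j(z_0)|$ from above when $|z_0|\ge R$. Vanishing to very high order at $z=0$ controls the remainder only where the local expansion converges; after analytic continuation to a point outside the disk of convergence, a function vanishing to order $M$ at $0$ need not be small at $z_0$ at all, and a family produced by Siegel's lemma carries no extra structure that could rescue this. That is precisely the new difficulty of working outside the disk, and the paper resolves it by \emph{not} using Siegel's lemma for the construction: the polynomials are built explicitly from the partial-fraction expansion of $n!^{S-r}\frac{k(k-1)\cdots(k-rn+1)}{(k+1)^S\cdots(k+n+1)^S}$, so that the remainder $J_F$ admits the integral representation
\begin{equation*}
J_F(z)=\frac{z^{(r+1)n+1}}{n!^r}\int_{[0,1]^S}F^{(rn)}(zt_1\cdots t_S)\prod_{j=1}^S t_j^{rn}(1-t_j)^n\,dt_1\cdots dt_S,
\end{equation*}
valid on all of $\mathcal{D}_F$, which yields $\limsup_n |J_F^{(k-1)}(z_0)|^{1/n}\le \max(1,|z_0|)^{r+1}g(z_0)^r/(r+1)^{S-r}$. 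The smallness thus comes from the factor $(r+1)^{-(S-r)}$ produced by the Beta integrals, not from the order of vanishing at $0$; without such a representation your assertion that the $R_j(z_0)$ are ``sufficiently small'' is unsupported. (Note also that no \emph{lower} bound for $|J_F(z_0)|$ is available, which is why Nesterenko's criterion is abandoned in favour of Siegel's, fed by the invertible matrix of forms coming from Shidlovsky's lemma.)

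A secondary misconception: the monodromy ``companions'' do not enter the final linear forms, and there is nothing to ``absorb or bound'' at $z_0$. Since $z_0$ lies in the simply connected cut domain $\dpr\subset\mathcal{D}_F$, all the functions involved are single-valued there and the forms evaluated at $z_0$ involve only $F_u^{[s]}(z_0)$ and $(\theta^u F)(z_0)$; the variation operators $\omal$ appear only in the vanishing conditions of the Pad\'e problem at the singularities $\alpha\in\Sigma\setminus\{0\}$, imposed for \emph{all} solutions $f\in\ker L$ (holomorphic at $0$ or not), where they supply exactly the equations needed for the zero-estimate count. Moreover the Shidlovsky-type lemma you invoke must in addition handle solutions whose remainders vanish identically (polynomial solutions of $L$ give $R(Y)=0$ once $r\ge1$); accommodating these, via the parameter $\varrho$, is the new feature the paper adds to the Bertrand--Beukers/Bertrand approach, and the independent linear forms are then the derivatives $J_F^{(k-1)}(z_0)$ for boundedly many $k$, rather than $J$ separately constructed forms.
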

Note that Eq. \eqref{eqdevintro} below immediately implies that this dimension is bounded above by $\ell_1 S+\mu$ for every $S\ge 0$, where the quantities $\ell_1$ and $\mu$ will be defined later in the paper. Both can be computed from $L_F$ and are independent of $\K$. We observe that $\ell_1 S+\mu$ can in fact be replaced by $\ell_0 S+\mu$ (where $\ell_0$ is defined in the introduction of \cite{fns} and is $\le \ell_1$) and provided one uses the analytic continuation to $\mathcal{D}_F$ of Identity (5.2) of \cite{fns} instead of \eqref{eqdevintro}. 

\bigskip

The new point in Theorem \ref{thintroun} is that we allow $|z_0| \ge R$ (provided $z_0$ belongs to the star-shaped domain $\mathcal{D}_F$ and is not a singularity of $L_F$). All previous Diophantine results we are aware of for values of any given $G$-function $F$ deal with points $z_0$ such that the defining series $\sum_{k=0}^\infty A_k z^k$ of $F(z)$ is {\em convergent} at $z=z_0$. Our method enables us to also deal with points $z_0$ such that $\sum_{k=0}^\infty A_k z_0^k$ is {\em divergent}.
For $|z_0| < R$, Theorem \ref{thintroun} follows from 
\cite[Theorem~3]{fns}. 

It can also be observed from our proof that, in Theorem \ref{thintroun}, it is in fact not necessary to assume that the function $F(z)=\sum_{k=0}^\infty A_k z^k$ is exactly a $G$-function. Indeed, a similar result holds provided we have $A_{k}=\sum_{j=1}^J c_j A_{j,k}$ for some $c_j\in \mathbb C$ independent of $k$ and where the series $\sum_{k=0}^\infty A_{j,k} z^k \in \K[[z]]$ are $G$-functions all solutions  of 
a $G$-operator (playing the role of $L_F$).

Moreover, Corollary 1 in \cite{fns} generalizes as follows.
\begin{coro}\label{coro:tanintro1} Let us fix some rational numbers $a_1, \ldots, a_{p+1}$ and $b_1, \ldots, b_{p}$ such that
$a_i \not\in\Z\setminus\{1\}$ and $b_j\not\in-\N$ for any $i$, $j$. 
 Then for any 
 $z_0\in\Qbar^*$ such that $z_0\notin[1, +\infty)$, 
infinitely many of the hypergeometric values 
\begin{equation}\label{eq:tanintro2}
{}_{p+s+1}F_{p+s} \left[ 
\begin{matrix}
a_1, a_2, \ldots, a_{p+1}, 1, \ldots, 1
\\
b_1, b_2, \ldots, b_p, 2, \ldots, 2
\end{matrix}
; z_0
\right], 
\quad s\ge 0
\end{equation}
are linearly independent over $\mathbb Q(z_0)$.
\end{coro}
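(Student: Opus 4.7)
The plan is to deduce the corollary by applying Theorem~\ref{thintroun} to the hypergeometric $G$-function $F(z):={}_{p+1}F_p[a_1,\ldots,a_{p+1};b_1,\ldots,b_p;z]$, with number field $\K:=\Q(z_0)$, following the same route as the proof of \cite[Corollary~1]{fns} (which handled the case $|z_0|<1$).

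First I would check the hypotheses of Theorem~\ref{thintroun}. The Taylor coefficients $A_k=(a_1)_k\cdots(a_{p+1})_k/((b_1)_k\cdots(b_p)_k\,k!)$ are rational; under $a_i\notin\Z\setminus\{1\}$ and $b_j\notin-\N$, they are all nonzero, so $F$ is a non-polynomial $G$-function. Its minimal $G$-operator $L_F$ is (a factor of) the classical hypergeometric operator, whose finite singularities lie in $\{0,1\}$; hence $\Sigma_F\subseteq\{1\}$ and $\mathcal{D}_F=\C\setminus[1,+\infty)$. The hypothesis $z_0\in\Qbar^*\setminus[1,+\infty)$ therefore places $z_0$ in $\mathcal{D}_F\cap\K^\times$ and outside $\{0,1\}$, so $z_0$ is not a singularity of $L_F$. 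Using $(1)_k/(2)_k=1/(k+1)$, one computes
\[
F_1^{[s]}(z_0)=z_0\cdot{}_{p+s+1}F_{p+s}\!\left[\begin{matrix}a_1,\ldots,a_{p+1},1,\ldots,1\\ b_1,\ldots,b_p,2,\ldots,2\end{matrix};z_0\right],
\]
so the values in \eqref{eq:tanintro2} equal $z_0^{-1}F_1^{[s]}(z_0)$ up to the fixed nonzero scalar $z_0\in\K^\times$; Theorem~\ref{thintroun} then gives $\dim_\K\mathrm{span}_\K\{F_n^{[s]}(z_0):n\ge 1,\ 0\le s\le S\}\to\infty$ as $S\to\infty$.

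The remaining step is to transfer this growth to the subfamily indexed by $n=1$, handled exactly as in the proof of \cite[Corollary~1]{fns}. The key claim is that every $F_n^{[s]}(z_0)$ ($n\ge 1$, $s\ge 0$) lies in the $\K$-linear span of $\{1\}\cup\{F_1^{[t]}(z_0):t\ge 0\}$. Granting it, the previous display forces this span to be infinite-dimensional over $\K=\Q(z_0)$, so infinitely many values in \eqref{eq:tanintro2} are $\Q(z_0)$-linearly independent. The claim follows from the identity $(n)_k/(n+1)_k=n/(n+k)$, which recasts
\[
F_n^{[s]}(z)=\frac{z^n}{n^s}\cdot{}_{p+s+1}F_{p+s}\!\left[\begin{matrix}a_1,\ldots,a_{p+1},n,\ldots,n\\ b_1,\ldots,b_p,n{+}1,\ldots,n{+}1\end{matrix};z\right],
\]
showing that $F_n^{[s]}$ is a hypergeometric function whose ``$n$'' and ``$n{+}1$'' parameters differ from the ``$1$'' and ``$2$'' parameters appearing in $F_1^{[s]}(z)/z$ by the positive integer $n-1$. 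Iterated classical contiguous relations for $_pF_q$, together with the identity $z\frac{\dd}{\dd z}F_1^{[t]}=F_1^{[t-1]}$, then express $F_n^{[s]}(z)$ as a finite $\Q(z)$-linear combination of $F_1^{[t]}(z)$'s (with $0\le t\le s+C_F$, for some $C_F\ge 0$ depending on $F$ but not on $n$, $s$) plus a polynomial in $z$; specialising at $z_0\in\K^\times$ (which avoids the singular set $\{0,1\}$) yields the containment.

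The main obstacle is establishing the uniform bound $C_F$ in the reduction, so that only finitely many $F_1^{[t]}$'s are needed independently of $n$. In the simplest admissible case $p=0$, $a_1=1$, one has directly $F_n^{[s]}(z)=F_1^{[s]}(z)-\sum_{m=1}^{n-1}z^m/m^s$ (the polylogarithmic setting), confirming $C_F=0$; the general hypergeometric case requires the contiguous-relation machinery. Since all the identities involved are identities of holonomic functions, their analytic continuation from $|z|<1$ to $\mathcal{D}_F$ is automatic, and the only genuinely new input beyond the argument of \cite[Corollary~1]{fns} is Theorem~\ref{thintroun} itself.
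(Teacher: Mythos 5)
Your setup is fine and matches what the paper does implicitly: checking the hypotheses of Theorem~\ref{thintroun} for $F={}_{p+1}F_p$ (non-polynomial since all $A_k\neq 0$, singularities of the hypergeometric operator among $\{0,1\}$, so $\mathcal{D}_F=\C\setminus[1,+\infty)$ and $z_0$ is admissible), and the identification of the values \eqref{eq:tanintro2} with $z_0^{-1}F_1^{[s]}(z_0)$. The gap is the reduction step, which is exactly where the content lies. Your key claim --- that every $F_n^{[s]}(z_0)$ lies in the $\K$-span of $\{1\}\cup\{F_1^{[t]}(z_0):t\ge 0\}$ --- is not proved: you defer it to an unspecified ``iterated contiguous relations'' argument, yourself flag the uniformity in $n$ as the main obstacle, and verify only the polylogarithmic case $p=0$, $a_1=1$. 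Moreover, as stated the claim is too strong: any reduction of this type (via the finite rank of the module of contiguous functions, or via a Proposition~\ref{prop:1}-type identity) produces, besides the $F_1^{[t]}$ with $t\le s$, the derivative values $(\theta^u F)(z_0)$ for $1\le u\le \mu-1$, and there is no reason (nor known proof) that these lie in the span of $\{1\}\cup\{F_1^{[t]}(z_0)\}$ when $p\ge 1$. One would also need to control that the $\Q(z)$-coefficients coming out of contiguous relations have no pole at $z_0$.

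The paper's deduction bypasses all of this by invoking the analytic continuation to $\mathcal{D}_F$ of Identity (5.2) of \cite{fns}, with the same parameters as in \cite{fns}: for this hypergeometric family one may take $m=\ell_0=1$ (this is permitted because only $F$ itself, holomorphic at $0$, is involved; see Remark~\ref{remcoro}), and the resulting identity expresses each $F_n^{[s]}$ as a $\K$-linear combination of the $F_1^{[t]}$, $t\le s$, plus $\K[z]$-linear combinations of the $\theta^u F$, $0\le u\le\mu-1$, with constant/polynomial coefficients, so that evaluation at $z_0$ is harmless and the identity extends to $\mathcal{D}_F$ by analytic continuation of holomorphic functions. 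With that identity in place your dimension count goes through verbatim, at the cost of enlarging the spanning set by the finitely many numbers $1$ and $(\theta^u F)(z_0)$, which only subtracts a constant from the lower bound of Theorem~\ref{thintroun}. So the fix is to replace your contiguous-relations claim by the already-established \cite[Identity (5.2)]{fns} (equivalently, the $m=\ell_0=1$ instance of Proposition~\ref{prop:1}); the rest of your argument is the paper's.
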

To deduce Corollary \ref{coro:tanintro1} from Theorem \ref{thintroun}, one proceeds as in \cite{fns} by applying the analytic continuation to $\mathcal{D}_F$ of Identity (5.2) of \cite{fns}, with the same parameters as in \cite{fns}. That Identity (5.2) is formally similar to \eqref{eqdevintro}; the main difference is that for some values of the parameters we cannot take $\lun = 1$ in  \eqref{eqdevintro}. (See Remark~\ref{remcoro} in \S \ref{subseclienfns}  for related comments).

\bigskip

Of course, a more precise version of Corollary \ref{coro:tanintro1} holds: the dimension over $\mathbb Q(z_0)$ of the $\mathbb Q(z_0)$-vector space generated by the numbers in \eqref{eq:tanintro2} for $0\le s \le S$ is larger than $C \log(S)$ for some constant $C$ that depends on the $a$'s and $b$'s, and on $[\Q(z_0):\Q]$. 
We recall that if $\vert z_0\vert <1$, the hypergeometric numbers in \eqref{eq:tanintro2} are equal, by definition, to the convergent series
$$
\sum_{k=0}^\infty \frac{(a_1)_k(a_2)_k\cdots (a_{p+1})_k}{(1)_k(b_1)_k\cdots (b_p)_k} \frac{z_0 ^k}{(k+1)^s}
$$
with $(a)_k = a(a+1)\ldots(a+k-1)$.
If $\vert z_0\vert > 1$, the numbers \eqref{eq:tanintro2} are defined by the analytic continuation of the hypergeometric series to the cut plane 
 $\mathbb C\setminus[1, +\infty)$. These numbers can in fact be also expressed as finite linear combinations of values of {\em convergent} hypergeometric series with rational parameters, where the coefficients of the combinations are in $\Qbar[\log(\Qbar^*)]$. This is a consequence of the connection formulas between the solutions at $0$ and $\infty$ of the generalized hypergeometric differential equation (see \cite[\S 3]{norlund}). For the polylogarithms analytically continued to $\mathbb C\setminus[1, +\infty)$ with $-\pi<\arg(1-z)<\pi$, the connection formula is as follows: for any integer $s\ge1$, we have
\begin{equation}\label{eq:tanpoly}
\Li_s(z)=(-1)^{s+1} \Li_s\Big(\frac{1}{z}\Big)-\frac{(2i\pi)^s}{s!}B_s\Big(\frac{\log(z)}{2i\pi}\Big), \quad z\notin [0,+\infty)
\end{equation} 
where $\log(z)=\ln\vert z\vert+i\arg(z)$, $\arg(z)\in(0,2\pi)$, and 
$B_s$ is the $s$-th Bernoulli polynomial (see \cite[\S 1.3]{oesterle}). In particular, $\Li_2(z)=-\Li_2(\frac1z)+\frac{\pi^2}{3}-\frac{1}{2}\log(z)^2+i\pi \log(z)$ in these conditions. The following specialization of Corollary~\ref{coro:tanintro1} (and its implicit dimension lower bound) is interesting because it improves on the best known result due to Marcovecchio \cite{Marcovecchio}, which is restricted to the case $0<\vert z_0\vert<1$.
\begin{coro} \label{coro:tanintro2} For any 
 $z_0\in\Qbar^*$ such that $z_0\notin[1, +\infty)$, 
infinitely many of the values $\Li_s(z_0)$, $s\ge1$, are linearly independent over $\mathbb Q(z_0)$.
\end{coro}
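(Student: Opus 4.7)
The plan is to deduce Corollary~\ref{coro:tanintro2} from Corollary~\ref{coro:tanintro1} by specializing the hypergeometric parameters so that the series in \eqref{eq:tanintro2} reduce, up to a factor in $\Q(z_0)$, to the polylogarithm values. Specifically, I would take $p=0$ and $a_1 = 1$. The assumption $a_i \not\in \Z\setminus\{1\}$ is satisfied since $a_1 = 1$, and the condition $b_j \not\in -\N$ is vacuous because there are no $b_j$'s. With these choices, the family \eqref{eq:tanintro2} indexed by $s\ge 0$ becomes the sequence of values at $z_0$ of ${}_{s+1}F_s$ with $s+1$ ones on top and $s$ twos on the bottom.

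A direct Pochhammer simplification, using $(1)_k = k!$ and $(2)_k = (k+1)!$, gives
\[
\frac{(1)_k^{s+1}}{(2)_k^s\, k!} \;=\; \frac{(k!)^{s+1}}{((k+1)!)^s\, k!} \;=\; \frac{1}{(k+1)^s}.
\]
Hence for $s\ge 1$ and $|z_0|<1$ the specialized hypergeometric value equals $\sum_{k=0}^\infty z_0^k/(k+1)^s = \Li_s(z_0)/z_0$, while for $s=0$ it equals $1/(1-z_0) \in \Q(z_0)$. Both the analytic continuation of the hypergeometric series used in Corollary~\ref{coro:tanintro1} and the branch of $\Li_s$ fixed by \eqref{eq:tanpoly} are holomorphic on the simply connected cut plane $\C\setminus[1,+\infty)$, satisfy the same linear differential equation, and agree on a neighborhood of $0$; therefore the identity ${}_{s+1}F_s[\cdots ; z_0] = \Li_s(z_0)/z_0$ persists on all of $\C\setminus[1,+\infty)$, which is exactly the domain appearing in both corollaries.

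With this identification in hand, Corollary~\ref{coro:tanintro1} asserts that infinitely many members of the list
\[
\frac{1}{1-z_0},\quad \frac{\Li_1(z_0)}{z_0},\quad \frac{\Li_2(z_0)}{z_0},\quad \frac{\Li_3(z_0)}{z_0},\ldots
\]
are $\Q(z_0)$-linearly independent. Removing the $s=0$ term, which lies in $\Q(z_0)$, and then multiplying each surviving value by the unit $z_0 \in \Q(z_0)^\times$ preserves linear independence, yielding infinitely many $\Q(z_0)$-linearly independent values among the $\Li_s(z_0)$ for $s\ge 1$. The only mildly delicate point is matching analytic continuations on the cut plane, which is handled above; no genuine obstacle remains, since Corollary~\ref{coro:tanintro2} is a clean specialization of Corollary~\ref{coro:tanintro1}. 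The same argument, applied to the quantitative statement mentioned after Corollary~\ref{coro:tanintro1}, gives in addition the implicit $C\log(S)$ lower bound for the dimension.
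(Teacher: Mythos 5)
Your proposal is correct and follows exactly the route the paper intends: Corollary \ref{coro:tanintro2} is obtained as the specialization of Corollary \ref{coro:tanintro1} with $p=0$ and $a_1=1$, where the Pochhammer simplification gives $\Li_s(z_0)/z_0$ for $s\ge 1$ (and an element of $\Q(z_0)$ for $s=0$), the identification persisting on the cut plane $\C\setminus[1,+\infty)$ by analytic continuation. Discarding the $s=0$ term and rescaling by $z_0$ preserves infinite-dimensionality of the span, as you say, so nothing is missing.
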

By \eqref{eq:tanpoly}, when $\vert z_0\vert> 1$ and $z_0\notin [1,\infty)$, $\Li_s(z_0)$ is a $\mathbb Q$-linear combination of $\Li_s(1/z_0)$ (defined by the convergent series $\sum_{k=1}^\infty z_0^{-k}/k^s$) and powers of $\log(z_0)$ and $2i\pi$.

\medskip

The proof of Theorem \ref{thintroun} is not a simple generalization of the corresponding theorem in \cite{fns}, even though the strategy is similar at the beginning. There are important difficulties.
As in \cite{fns}, we first consider the auxiliary function
\begin{equation}\label{eqJFintro}
J_{F}(z) = n!^{S-r}\sum_{k=0}^\infty \frac{k(k-1)\cdots (k-rn+1)}{(k+1)^S(k+2)^S\cdots (k+n+1)^S} \,A_k \,z^{n+k+1}
\end{equation}
where {\em a priori} $\vert z\vert < R$, $r$ and $n$ are integer parameters such that $r\leq S $ and eventually $n\to +\infty$. 
We have proved in \cite{fns} that it can be expressed as a linear combination of the functions $F_n^{[s]}$ and $(z \frac{\dd }{\dd z})^u F$ with coefficients in $\K[z]$. Here, we prove similarly that 
\begin{equation}\label{eqdevintro}
J_F(z)=\sum_{u=1}^{\lun}\sum_{s=1}^S P_{u,s,n}(z) F_{u}^{[s]}(z)+ \sum_{u=0}^{\mu-1} \widetilde{P}_{u,n}(z) (\theta^uF)(z) 
\end{equation}
where, in particular, the parameter $\ell_1$ is defined in \S\ref{subsecnotationspade}.  
For $|z| < R$, in \cite{fns} we applied the saddle point method to estimate $ \vert J_F(z)\vert $ precisely; this enabled us to apply a generalization of Nesterenko's linear independence criterion \cite{nest} and deduce Theorem \ref{thintroun} in this case. When $|z|\geq R$, the first thing we would like to point out is that $ \vert J_F(z)\vert $ is still small enough to prove linear independence results; this could seem surprising since the series \eqref{eqJFintro} is divergent if $|z| > R$. Instead, we use an integral representation of $J_f(z)$ to bound $ \vert J_F(z)\vert $ from above (see \S \ref{subsec:ubJF}). However we did not try to bound $ \vert J_F(z)\vert $ from below (and don't even know if this would be possible) because the saddle point method would not be easy at all to generalize to deal with the analytic continuation of $J_F(z)$; indeed, there are immediate problems when one tries to extend it first to the case $\vert z\vert = R$ (for instance, the number of certain critical points suddenly drops when one shifts from the case $\vert z\vert <R$ to the case $\vert z\vert=R$). Therefore we cannot follow the original approach of \cite{BR, rivoalcras} based on Nesterenko's linear independence criterion. This is the reason why we follow the strategy of \cite{SFcaract}: we first prove that the polynomial coefficients of \eqref{eqdevintro} are solution of a Pad\'e approximation problem (namely Theorem \ref{thpade} stated in \S \ref{subsecnotationspade}). This enables us to apply a general version of Shidlovsky's lemma; since some solutions of the corresponding differential system may have identically zero Pad\'e remainders, the version of Shidlovsky's lemma proved in \cite{SFcaract} (based upon differential Galois theory, following the approach of Bertrand-Beukers \cite{BB} and Bertrand \cite{DBShid}) does not apply to our setting. We generalize it in \S \ref{secshid}, and we hope this result will have other applications. To conclude the proof we apply a linear 
independence criterion in the style of Siegel. 

\bigskip

The structure of this paper is as follows. In \S \ref{sec2} we focus on the two main tools in our approach. First we extend in \S \ref{subsecdeffjs} the definition of $f_j^{[s]}$, given in \cite{fns} when $f$ is holomorphic at 0, to any function in the Nilsson class with rational exponents at 0; then we study in \S \ref{subsecvar} the variation around a singularity of a solution of a Fuchsian operator, which generalizes the weight of polylogarithms. Section \ref{secpadestatement} is devoted to the statement of Theorem \ref{thpade}, namely the Pad\'e approximation problem satisfied by the polynomial coefficients of \eqref{eqdevintro}. We also make comments on this problem, especially on the fact that vanishing conditions involve non-holomorphic functions. Then we prove Theorem \ref{thpade} in \S \ref{secpadepreuve}, building upon the construction of \cite{fns}. We move in \S \ref{secshid} to the general version of Shidlovsky's lemma (namely Theorem \ref{thshid}) that we state and prove. We apply it in \S \ref{secapplishid} to the Pad\'e approximation problem of Theorem \ref{thpade}: this enables us to construct linearly independent linear forms (see Proposition \ref{propshid}). For the convenience of the reader, we state and prove in \S \ref{sec:critere} a (rather classical) version of Siegel's linear independence criterion. Section \ref{sec:estimates} contains the estimates needed in \S \ref{secfin} to conclude the proof of Theorem \ref{thintroun}.

\section{Properties of $f_j^{[s]}$ and variation around a singularity} \label{sec2}

In this section we focus on the two main tools in our approach. First we extend the definition of $f_j^{[s]}$, given in \cite{fns} when $f$ is holomorphic at 0, to any function in the Nilsson class with rational exponents at 0; we give both an inductive definition and an explicit formula. Then we study in \S \ref{subsecvar} the variation around a singularity of a solution of a Fuchsian operator, which generalizes the weight of polylogarithms.

\bigskip

Throughout this section we fix a simply connected dense open subset $\dpr$ of $\C$, with $0\not\in\dpr$; all functions we consider will be holomorphic on $\dpr$. In all applications we have in mind, $\dpr$ will be the subset defined after Lemma \ref{lemopdiff} in \S \ref{subsecnotationspade}. We also fix a determination of $\log (z)$, holomorphic on $\dpr$; then $z^k =\exp\big(k \log (z)\big)$ is well-defined for $z\in\dpr$ and $k\in\Q$.

\subsection{General definition of $f_j^{[s]}$} \label{subsecdeffjs}

Since $G$-operators are Fuchsian with rational exponents, all the functions we consider in this paper have local expansions at 0 of the form 
\begin{equation} \label{eqNil}
f(z) = \sum_{k\in \Q \atop k\geq \capaf} \sum_{i=0}^{e} a_{k,i} z^k \log (z)^i
\end{equation}
with $a_{k,i}\in \mathbb C$. 
We denote by $\calN$ the set of all functions holomorphic on $\dpr$ that have such an expansion at 0 (i.e., belong to the Nilsson class with rational exponents at 0) -- recall that $\dpr$ is a simply connected dense open subset of $\C\setminus\{0\}$ fixed in this section. It is important to observe that $\calN$ is a $\mathbb C$-differential algebra stable by primitivation.
(Restricting to rational exponents is not really needed, but we do it because it contains all applications we have in mind.)

\bigskip

The notation $f_j^{[s]}$ has been defined in \cite{fns} for $f$ holomorphic at 0; let us extend this definition to any $f\in \calN$.
With this aim in view, we first let $\ev : \calN \to\C$ denote the ``regularized'' evaluation at 0: if $f(z) = \sum_{k,i } a_{k,i} z^k \log (z)^i $ around $z=0$ then $\ev(f) = a_{0,0}$ by definition. Then for any $f\in\calN $, consider any of its primitives $\mathcal{P}(f)$: it is a basic fact that $\mathcal{P}(f)\in\calN$ as well. We will denote by 
$\int^z f(x) dx$ (with no lower bound in the integral) the unique primitive $g$ of $f$ such that $\ev(g) = 0$. Of course we have $g(z) = \int_0^z f(x) dx$ if this integral is convergent. Moreover, $\int^z$ is a $\mathbb C$-linear operator acting on $\calN$. 

Given $f\in\calN$ and $j\geq 1$, we let 
\begin{equation} \label{eqdefun}
f_j(z) =f_j^{[1]}(z) = \int^z x^{j-1} f(x) dx
\end{equation}
and we define recursively $f_j^{[s]}$ for $s\geq 2$ by 
\begin{equation} \label{eqdefde}
 f_j^{[s+1]}(z) = \int ^z \frac1{x} f_j^{[s]}(x) dx . 
\end{equation}
This defines $f_j^{[s]}\in\calN $ for any $j,s\geq 1$: the function $ f_j^{[s]}$ is holomorphic on $\dpr$, and its local expansion at 0 is of the form \eqref{eqNil}. 

\bigskip

Let us focus on a few special cases. For any $f\in\calN$ there exists $\capaf\in\Q$ such that 
$$f(z) = \sum_{k\in \Q \atop k\geq \capaf} \sum_{i=0}^{e} a_{k,i} z^k \log (z)^i$$
around $z=0$.
If $j\geq 1$ is such that $j > -\capaf$ then, for any $s\geq 1$, in the process of defining $f_j^{[s]}$, all integrals are convergent; therefore in Eqns. \eqref{eqdefun} and \eqref{eqdefde} 
we may replace $\int^z$ with $\int_0^z$. This ``convergent'' case is the most important one. If $f$ is holomorphic at 0 then we may take $\capaf=0$ and this situation holds for any $j\geq 1$. More generally, if the local expansion of $f$ at 0 reads 
$$f(z) = \sum_{k\in \Q \atop k\geq \capaf} a_{k} z^k $$
with $a_k=0$ for any negative integer $k$, then the above definition yields 
$$
f_j^{[s]}(z) = \sum_{k\in \Q \atop k\geq \capaf} \frac{a_k}{(k+j)^s}z^{k+j}
$$
for any $j,s\geq 1$, with $|z|$ small enough. In this sum, the terms corresponding to $k\in\Z$, $k<0$, have to be omitted; this is harmless since we have assumed $a_k=0$ in this case. Of course, if $f$ is holomorphic at 0 then we recover the definition of $f_j^{[s]}$ given in \cite{fns}. 
The situation where $a_k\neq 0$ for some negative integer $k$ is more subtle. For instance, if $f(z) = z^k$ with $k\in \Z$, $k<0$, then for any $j,s\geq 1$ we have 
$$
f_j^{[s]}(z) = \begin{cases} 
\frac{1}{(k+j)^s}z^{k+j} \; \mbox{ if } j \neq -k,
\\
\frac{1}{s!}\log(z)^s \; \mbox{ if } j = -k.
\end{cases}
$$
More generally, the following lemma provides an explicit formula to compute $f_j^{[s]}$. It will be used to prove Lemma \ref{lemexplicite} in \S\ref{ssec:expcompJf3110}.

\begin{lem} \label{lemexplicitefns}
Let $f \in \mathcal{N}$ be such that 
\begin{equation}\label{eq:f3010}
f(z) = \sum_{k\in \Q \atop k\geq \capaf} \sum_{i=0}^{e} a_{k,i} z^k \log (z)^i
\end{equation}
for $|z|$ small enough. 
Then for any $j\ge 1,s\geq 1$ and any $z\in\dpr$ we have
\begin{equation}\label{eq:fjs3010}
f_j^{[s]}(z) = \sum_{i=0}^e a_{-j, i} i! \frac{\log (z)^{s+i}}{(s+i)! } 
+ \sum_{k\in \Q\setminus\{-j\} \atop k\geq \capaf} \sum_{\lambda=0}^{e} z^{k+j} \frac{\log (z)^\lambda}{\lambda!} \sum_{i=\lambda}^e a_{k,i} (-1)^{i-\lambda} \frac{i! \binom{s-1+i-\lambda}{s-1} }{(k+j)^{s+i-\lambda}} .
\end{equation}
\end{lem}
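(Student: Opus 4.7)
My plan is to reduce to a single monomial, compute a clean antiderivative formula, and then induct on $s$.

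\textbf{Step 1 (Reduction by linearity).} Both sides of \eqref{eq:fjs3010} are $\C$-linear in $f$, and the operators $f\mapsto \int^z x^{j-1}f(x)\,dx$ and $f\mapsto\int^z \frac{1}{x}f(x)\,dx$ preserve termwise convergence on the Nilsson expansion near $0$. Since $f_j^{[s]}$ lies in $\calN$, and the identity is between holomorphic functions on the simply connected set $\dpr$, it suffices to verify it near $z=0$ for each monomial $f(z)=z^k\log(z)^i$ (with $k\in\Q$, $0\le i\le e$) separately; one then sums over $k,i$ and invokes analytic continuation.

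\textbf{Step 2 (Antiderivative formula).} A direct check by differentiation gives, for any $m\neq 0$ and $\lambda\ge 0$,
\[
\int x^{m-1}\log(x)^\lambda\,dx \;=\; \sum_{\mu=0}^{\lambda} (-1)^{\lambda-\mu}\,\frac{\lambda!}{\mu!}\,\frac{x^m\log(x)^\mu}{m^{\lambda-\mu+1}} + \mbox{const}.
\]
Because each summand $x^m\log(x)^\mu$ satisfies $\ev(x^m\log(x)^\mu)=0$ whenever $m\neq 0$, this right-hand side is precisely the regularized primitive $\int^z$ (up to relabelling $x\to z$). When $m=0$, instead $\int^z x^{-1}\log(x)^\lambda\,dx=\frac{\log(z)^{\lambda+1}}{\lambda+1}$, again with vanishing $\ev$.

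\textbf{Step 3 (Non-resonant case $k\neq -j$).} Apply Step~2 with $m=k+j\neq 0$ to get $f_j^{[1]}$; one sees it matches \eqref{eq:fjs3010} for $s=1$ since $\binom{i-\lambda}{0}=1$. For the inductive step $s\to s+1$, differentiating the claimed formula with respect to the weighting and applying Step~2 again to each monomial $z^{k+j}\log(z)^\lambda/\lambda!$ in $f_j^{[s]}$ produces, after swapping the orders of summation, a coefficient of $z^{k+j}\log(z)^\mu/\mu!$ equal to
\[
\sum_{i\ge\mu} a_{k,i}(-1)^{i-\mu}\,\frac{i!}{(k+j)^{s+1+i-\mu}}\sum_{t=0}^{i-\mu}\binom{s-1+t}{s-1}.
\]
The hockey-stick identity $\sum_{t=0}^{n}\binom{s-1+t}{s-1}=\binom{s+n}{s}$ converts the inner sum to $\binom{s+i-\mu}{s}$, which is exactly the coefficient predicted by \eqref{eq:fjs3010} with $s$ replaced by $s+1$.

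\textbf{Step 4 (Resonant case $k=-j$ and assembly).} Here $\frac{1}{x}f_j^{[s-1]}(x)$ contains only purely logarithmic monomials, and the identity $\int^z x^{-1}\log(x)^{s+i-1}/(s+i-1)!\,dx=\log(z)^{s+i}/(s+i)!$ (with vanishing regularized evaluation) gives by a trivial induction the first summand of \eqref{eq:fjs3010}. Since the non-resonant integration in Step~3 never produces pure log terms and the resonant integration in Step~4 never produces $z^{k+j}$ terms, the two contributions add cleanly. If $-j<\capaf$ then $a_{-j,i}=0$ and the resonant term disappears, consistent with the stated formula. The expected main obstacle is purely bookkeeping: correctly tracking the binomial coefficient in the inductive step, which is why the hockey-stick identity is the pivotal ingredient.
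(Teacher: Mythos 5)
Your proposal is correct and follows essentially the same route as the paper's proof: the same explicit primitives of $z^{m}\log(z)^{\lambda}$ and $\log(z)^{\lambda}/z$, induction on $s$, and the hockey-stick identity, which is exactly the paper's telescoping identity \eqref{eq:combi3010}. The only cosmetic difference is that you first reduce to single monomials and separate the resonant case $k=-j$ from the non-resonant one, whereas the paper carries the full Nilsson series through the induction; the underlying computation is identical.
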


\begin{Remark} We may define $f_j^{[s]}$ also for $s=0$, by letting $f_j^{[0]}(z) = z^j f(z)$. Then Eq. \eqref{eq:fjs3010} holds also in this case, provided we write $\binom{s-1+i-\lambda}{s-1} = \frac{s}{s+i-\lambda} \binom{s+i-\lambda}{s}$ and consider that for $s=0$, this is equal to 1 if $i = \lambda$, and to 0 otherwise. 
\end{Remark}

\begin{proof} We shall freely use the following elementary primitivation formulas: for every integers $m,n \in \mathbb Z$ such that $m\neq -1$ and $n\ge 0$, we have on $\dpr$
\begin{equation*}
\mathcal{P}\big(z^m\log(z)^n\big) 
=z^{m+1}\sum_{\ell=0}^n \binom{n}{\ell} \frac{(-1)^{n-\ell}(n-\ell)!}{(m+1)^{n-\ell+1}}\log(z)^\ell +c, \quad \mathcal{P}\bigg(\frac{\log(z)^n}{z}\bigg) = \frac{\log(z)^{n+1}}{n+1}+c,
\end{equation*}
where, in both cases, the constant $c\in \mathbb C$ is arbitrary. In particular, $\int^z x^m\log(x)^n \,dx$ and $\int^z \log(x)^n/x \,dx$ are the primitives for which $c=0$ in these formulas.

We will also need the following combinatorial identity: for every integers $i,j,s\ge 0$, 
\begin{equation}\label{eq:combi3010}
\sum_{\lambda=j}^i \binom{s+i-\lambda}{s} = \binom{s+1+i-j}{s+1}.
\end{equation}
It is readily proved by noticing that $\binom{s+i-\lambda}{s}=\binom{s+1+i-\lambda}{s+1}-\binom{s+i-\lambda}{s+1}$, which transforms the sum on the left-hand side of \eqref{eq:combi3010} into a telescoping one (with the usual convention that $\binom{a}{b}=0$ if $a<b$).

We now prove Eq. \eqref{eq:fjs3010} by induction on $s\ge 1$. Let us prove the case $s=1$. For any integer $j\ge 1$ and any $z\in \dpr$, we have
\begin{align*}
f_j^{[1]}(z)&= \int^z x^{j-1}\bigg(\sum_{k\in \Q \atop k\geq \capaf} \sum_{i=0}^{e} a_{k,i} x^{k} \log (x)^i\bigg) dx
\\
&=\sum_{i=0}^{e} a_{-j,i} \int^z \frac{\log(x)^i}{x} dx + \sum_{k\in \Q\setminus\{-j\} \atop k\geq \capaf} \sum_{i=0}^{e} a_{k,i} \int^z x^{k+j-1} \log(x)^i dx
\\
&=\sum_{i=0}^{e} \frac{a_{-j,i}}{i+1}\log(z)^{i+1}+
\sum_{k\in \Q\setminus\{-j\} \atop k\geq \capaf} z^{k+j}\sum_{i=0}^{e} a_{k,i}\sum_{\lambda=0}^{i} \binom{i}{\lambda} \frac{(-1)^{i-\lambda}(i-\lambda)!}{(k+j)^{i-\lambda+1}}\log(z)^\lambda 
\\
&=\sum_{i=0}^{e} a_{-j,i}i!\frac{\log(z)^{i+1}}{(i+1)!}+
\sum_{k\in \Q\setminus\{-j\} \atop k\geq \capaf} z^{k+j} \sum_{\lambda=0}^{e} \frac{\log(z)^\lambda}{\lambda !}\sum_{i=\lambda}^{e} a_{k,i} \frac{(-1)^{i-\lambda}i!}{(k+j)^{i-\lambda+1}}
\end{align*}
so that Eq.~\eqref{eq:fjs3010} is proved in the case $s=1$.

Let us now assume that Eq.~\eqref{eq:fjs3010} is proved for $f_j^{[s]}$ and let us prove it for $f_j^{[s+1]}$. For simplicity, we define 
$$
C_{\lambda}(u,s):=\sum_{i=\lambda}^{e} a_{k,i} \frac{(-1)^{i-\lambda}i!}{u^{s+i-\lambda}}\binom{s-1+i-\lambda}{s-1}
$$
so that \eqref{eq:fjs3010} becomes 
\begin{equation}\label{eq:fjs13010}
f_j^{[s]}(z)= \sum_{i=0}^{e} \frac{a_{-j,i}i!}{(s+i)!}\log(z)^{s+i}+\sum_{k\in \Q\setminus\{-j\} \atop k\geq \capaf} z^{k+j} \sum_{\lambda=0}^{e} C_{\lambda}(k+j,s) \frac{\log(z)^\lambda}{\lambda!}.
\end{equation}
For any integer $j\ge 1$ and any $z\in \dpr$, we have:
\begin{align*}
f_j^{[s+1]}(z)&=\int^z \frac{1}{x} f_j^{[s]}(x) dx
\\
&=\int^z \frac{1}{x}\bigg(\sum_{i=0}^{e} \frac{a_{-j,i}i!}{(s+i)!}\log(x)^{s+i}\bigg) dx
\\
&\qquad \qquad+\int^z \frac{1}{x}\bigg(\sum_{k\in \Q\setminus\{-j\} \atop k\geq \capaf} x^{k+j} \sum_{\lambda=0}^{e} C_{\lambda}(k+j,s)\frac{\log(x)^\lambda}{\lambda!}\bigg) dx 
\\
&=\sum_{i=0}^{e} \frac{a_{-j,i}i!}{(s+i)!(s+i+1)}\log(z)^{s+i+1}
\\
&\qquad \qquad +\sum_{k\in \Q\setminus\{-j\} \atop k\geq \capaf} \sum_{\lambda=0}^{e} C_{\lambda}(k+j,s)\sum_{\ell=0}^{\lambda}
\frac{(-1)^{\lambda-\ell}\binom{\lambda}{\ell}(\lambda-\ell)!}{(k+j)^{\lambda-\ell+1}\lambda!}z^{k+j}\log(z)^\ell 
\\
&=\sum_{i=0}^{e} \frac{a_{-j,i}i!}{(s+i+1)!}\log(z)^{s+i+1}
\\
&\qquad \qquad +\sum_{k\in \Q\setminus\{-j\} \atop k\geq \capaf} z^{k+j} \sum_{\ell=0}^{e} \frac{\log(z)^\ell}{\ell!} \sum_{\lambda=\ell}^{e} C_{\lambda}(k+j,s)
\frac{(-1)^{\lambda-\ell}}{(k+j)^{\lambda-\ell+1}} .
\end{align*}
Now it remains to simplify the inner sum over $\lambda$. We have 
\begin{align}
\sum_{\lambda=\ell}^{e} C_{\lambda}(k+j,s)
\frac{(-1)^{\lambda-\ell}}{(k+j)^{\lambda-\ell+1}}&=\sum_{\lambda=\ell}^{e}\sum_{i=\lambda}^{e} a_{k,i} \frac{(-1)^{i-\lambda}i!\binom{s-1+i-\lambda}{s-1}}{(k+j)^{s+i-\lambda}} \frac{(-1)^{\lambda-\ell}}{(k+j)^{\lambda-\ell+1}} \notag
\\
&=\sum_{i=\ell}^e a_{k,i}\frac{(-1)^{i-\ell}i!}{(k+j)^{s+1+i-\ell}}\sum_{\lambda=\ell}^i \binom{s-1+i-\lambda}{s-1} \label{eq:innersum3010}
\\
&=\sum_{i=\ell}^e a_{k,i}\frac{(-1)^{i-\ell}i!}{(k+j)^{s+1+i-\ell}} \binom{s+i-\ell}{s} \notag
\\
&=C_\ell(k+j,s+1), \notag
\end{align}
where we have used Identity \eqref{eq:combi3010} to compute the inner sum in Eq. \eqref{eq:innersum3010}. Therefore, 
\begin{equation*}
f_j^{[s+1]}(z)=\sum_{i=0}^{e} \frac{a_{-j,i}i!}{(s+i+1)!}\log(z)^{s+i+1}
+\sum_{k\in \Q\setminus\{-j\} \atop k\geq \capaf} z^{k+j} \sum_{\ell=0}^{e} C_\ell(k+j,s+1)\frac{\log(z)^\ell}{\ell!}.
\end{equation*}
This is nothing but Eq. \eqref{eq:fjs13010} with $s$ replaced by $s+1$, which completes the proof of the induction and that of Lemma~\ref{lemexplicitefns}. 
\end{proof}

\subsection{Variation around a singularity} \label{subsecvar}

Recall that $\dpr$ is a simply connected dense open subset of $\C$, with $0\not\in\dpr$, fixed in this section. Let $L$ be a Fuchsian operator with rational exponents at 0 (for instance a $G$-operator); assume that $\dpr$ does not contain any singularity of $L$. Let $f$ be a function holomorphic on $\dpr$, such that $Lf=0$.

\bigskip

For any $\alpha\in\C$ and any $z\in\dpr$ we denote by $\taual f(z)$ the value obtained by analytic continuation of $f$ along the following path: starting from $z$, go very close to $\alpha$ (while remaining in $\dpr$), then do a small loop going around $\alpha$ once (in the positive direction), and at last come back to $z$ (remaining again in $\dpr$). Observe that if $\alpha\not\in\dpr$ (for instance if $\alpha$ is a singularity of the Fuchsian operator we consider), then the loop around $\alpha$ does not remain in $\dpr$ so that $\taual f(z)$ will be distinct from $f(z) $ in general. This process defines a function $\taual f$ holomorphic on $\dpr$. We also let 
$$\omal f(z) = \taual f(z) - f(z)$$
denote the variation of $f$ around $\alpha$; this function is also holomorphic on $\dpr$. For instance, $\omun \log(1-z) = 2i \pi$ (with $\dpr = \C \setminus [0, +\infty)$), and $\omal f(z)=0$ is $f$ is meromorphic at $\alpha$. An important property of analytic continuation is that monodromy (in particular variation) commutes with differentiation: we have $(\omal f)' = \omal (f')$.

\begin{lem}\label{lempoids}
Assume that the $k$-th derivative $f^{(k)}(z)$ has a finite limit as $z\to \alpha$, for any $k \in \{0,\ldots,K\}$. Then we have
$$\omal f(z) = o((z-\alpha)^K)\mbox{ as } z\to\alpha.$$
\end{lem}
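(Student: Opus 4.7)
The plan is to exploit the Fuchsian structure of $L$ at $\alpha$ together with the uniqueness of Nilsson expansions. Since $L$ is Fuchsian with rational exponents at $\alpha$, the solution $f$, restricted to a punctured neighbourhood of $\alpha$ contained in $\dpr$, admits a convergent local expansion of the form
\[
f(z) = \sum_{\rho, i} c_{\rho,i}\,(z-\alpha)^{\rho}\bigl(\log(z-\alpha)\bigr)^{i},
\]
where $\rho$ runs over a discrete subset of $\Q$ bounded below and $i$ over a finite subset of $\Z_{\ge 0}$. I will call a monomial here \emph{invariant} if it is annihilated by $\taual$, i.e.\ if $\rho \in \Z$ and $i = 0$, and \emph{non-invariant} otherwise; then $f = f_1 + f_2$ splits accordingly. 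Since $\omal f_1 = 0$, we have $\omal f = \omal f_2$, and a direct computation shows that the Nilsson expansion of $\omal f_2$ is supported on the same set of exponents $\rho$ as $f_2$ (the operator $\omal$ preserves each $(z-\alpha)^{\rho}$ factor and lowers the $\log$-degree).

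Next, the hypothesis in the case $k = 0$ forces $f_1$ to have no negative-integer powers of $(z-\alpha)$: otherwise $f$ itself would blow up at $\alpha$, since the pole term $(z-\alpha)^{-m}$ dominates, asymptotically, any sum of non-invariant monomials. Hence $f_1$ is holomorphic at $\alpha$, so each $f_1^{(k)}$ has a finite limit at $\alpha$, and therefore $f_2^{(k)} = f^{(k)} - f_1^{(k)}$ has a finite limit at $\alpha$ for $0 \le k \le K$ as well. The main step is then to show that every non-invariant monomial in the expansion of $f_2$ has $\rho > K$. Assume otherwise and pick $(\rho_0, i_0)$ minimising $\rho$ (then maximising $i$) among non-invariant monomials with $\rho \leq K$. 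Set $k := \lceil \rho_0 \rceil$ when $\rho_0 \notin \Z$, and $k := \rho_0$ when $\rho_0 \in \Z$ (in which case $i_0 \ge 1$ by non-invariance); in both cases $k \le K$. Differentiating $(z-\alpha)^{\rho_0}(\log(z-\alpha))^{i_0}$ exactly $k$ times produces a nonzero leading asymptotic term $C\,(z-\alpha)^{\rho_0 - k}(\log(z-\alpha))^{i_0}$ which is unbounded as $z \to \alpha$. By the uniqueness of Nilsson expansions at $\alpha$ (equivalently, by the Jordan decomposition of the local monodromy on the space of solutions of $L$), distinct Nilsson monomials are asymptotically linearly independent, so this unbounded contribution cannot be cancelled by the $k$-th derivatives of the other monomials of $f_2$, contradicting the finite-limit hypothesis on $f_2^{(k)}$.

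Consequently every non-invariant monomial in the expansion of $f$ has $\rho > K$, and by the first paragraph the same holds for $\omal f$. Therefore
\[
\omal f(z) = O\bigl(|z-\alpha|^{\rho_{\min}}\,|\log(z-\alpha)|^{I}\bigr) \qquad (z \to \alpha)
\]
for some $\rho_{\min} > K$ and some $I \in \Z_{\ge 0}$, whence $\omal f(z) = o((z-\alpha)^K)$ as claimed. The main obstacle I anticipate is a rigorous justification of the asymptotic linear independence of distinct Nilsson monomials invoked in the second paragraph: it is standard, but deserves care when several residue classes $\rho \bmod \Z$ coexist in the expansion, so that the ``leading term'' extraction must be performed class by class before comparing across classes.
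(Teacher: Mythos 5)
Your argument is correct in substance, but it takes a genuinely different route from the paper's. The paper works directly with $\omal f$: since variation commutes with differentiation, $(\omal f)^{(k)}=\omal(f^{(k)})$, and because the continuation path can be chosen to stay arbitrarily close to $\alpha$, the function $\taual(f^{(k)})$ has the \emph{same} finite limit as $f^{(k)}$; hence $(\omal f)^{(k)}(z)\to 0$ as $z\to\alpha$ for every $k\le K$, and then one only needs the existence of a single leading term $\omal f(z)\sim c\,(z-\alpha)^r\log(z-\alpha)^j$ (as $\omal f$ is again annihilated by the Fuchsian operator) to force $r>K$. You never use the observation that continuation along a short path preserves limits; instead you expand $f$ itself in its Nilsson series at $\alpha$, split off the monodromy-invariant part $f_1$, and prove by a minimal-exponent/differentiation argument that every non-invariant monomial of $f$ has exponent $>K$ -- a slightly stronger statement from which the bound on $\omal f=\omal f_2$ follows. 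Your route yields a more explicit picture of $f$ near $\alpha$, at the price of the full ``asymptotic independence of Nilsson monomials'' bookkeeping you rightly flag, whereas the paper's trick reduces everything to one dominant term of $\omal f$. Two small repairs are needed in your write-up, neither of which affects the outcome: first, your justification that $f_1$ is holomorphic is stated backwards -- a pole $(z-\alpha)^{-m}$ does \emph{not} dominate, e.g., $(z-\alpha)^{-m-1/2}$ or $(z-\alpha)^{-m}\log(z-\alpha)$; the correct point is that if $f_1$ had a pole, the overall dominant monomial of $f$ would have negative exponent, so $f$ could not have a finite limit (the same dominance principle you invoke later); second, when $\rho_0<0$ your prescription $k=\lceil\rho_0\rceil$ (or $k=\rho_0$ in the integer case) is negative, and one should simply take $k=0$ there, the monomial itself being unbounded. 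Also note that $\omal$ lowers the $\log$-degree only for integer exponents (for $\rho\notin\Z$ it preserves it), but only the preservation of the exponents matters for your argument.
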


\begin{Remark} In the case of the polylogarithms, by \cite[p. 53, Proposition 1]{oesterle}, we have 
$$
\textup{var}_1 \Li_s(z)=-(2i\pi) \frac{\log(z)^{s-1}}{(s-1)!}, \quad s\ge 1
$$ 
We observe that $\textup{var}_1 \Li_s$ is essentially the weight of $\Li_s$ (see for instance \cite[Lemma 4.1]{Pise}). The case $K=0$ of Lemma \ref{lempoids} corresponds to the remark before Lemma 4.2 of \cite{Pise}: if $f$ has a finite limit at $\alpha$ then $\omal f(\alpha) = 0$.
\end{Remark}

\bigskip 

\begin{proof}[Proof of Lemma \ref{lempoids}] For any $k \in \{0,\ldots,K\}$ we have $(\omal f)^{(k)} = \omal ( f ^{(k)} ) = \taual (f^{(k)} ) - f^{(k)} $. Since $f^{(k)} (z) $ has a finite limit as $z\to\alpha$, $\taual (f^{(k)} )(z)$ has the same limit since the path used for analytic continuation can remain very close to $\alpha$ as $z\to\alpha$. Therefore $\lim_{z\to\alpha} (\omal f)^{(k)} (z) = 0$ for any $k \in \{0,\ldots,K\}$. Now $f$ is annihilated by a Fuchsian operator, so that the same holds for $\taual f$ and $\omal f$: there exist $c \in\C$, $r\in\Q$ and $j\in\N$ such that $\omal f(z) \sim c (z-\alpha)^r (\log(z-\alpha))^j$ as $z\to\alpha$. Since all derivatives up to order $K$ vanish at $\alpha$, we have $r>K$. This concludes the proof of Lemma \ref{lempoids}.
 \end{proof}
 
\bigskip

\begin{lem}\label{lempoidsalpha}
For any $\alpha\in\C\setminus\{0\}$ and any $u,s \geq 1$ there exists $P_{\alpha,u,s,f} \in\C[X]$ of degree at most $s$ such that, for any $z\in\dpr$:
$$\omal ( f_u^{[s]}) (z) = (\omal f)_u^{[s]} (z ) + P_{\alpha,u,s,f} ( \log z).$$
\end{lem}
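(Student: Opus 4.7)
The plan is to prove the lemma by induction on $s\ge 1$, using two facts: (i) $\omal$ commutes with $d/dz$ (as noted just before Lemma~\ref{lempoids}), and (ii) since $1/x$ and $x^{u-1}$ (recall $u\in\N$, $u\ge1$) are single-valued on $\C\setminus\{0\}$, they commute with $\omal$. Combining these, for any $g\in\calN$ both $\omal\!\left(\int^z g(x)\,dx\right)$ and $\int^z \omal g(x)\,dx$ are primitives of $\omal g$, and hence differ by a constant in $\C$ (whose explicit value we shall not need). Note also that $\omal f \in \calN$ whenever $f\in\calN$, since $\omal f$ is annihilated by the same Fuchsian operator as $f$; so the construction of $\S\ref{subsecdeffjs}$ applies equally well to $\omal f$.

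For the base case $s=1$, both $\omal(f_u^{[1]})$ and $(\omal f)_u^{[1]}(z) = \int^z x^{u-1}\omal f(x)\,dx$ are primitives of $x^{u-1}\omal f$, so they differ by some constant $c_1\in\C$; we set $P_{\alpha,u,1,f}(X):=c_1$, which has degree $\le 1$.

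For the inductive step, assume $\omal(f_u^{[s]})(z) = (\omal f)_u^{[s]}(z) + P(\log z)$ with $\deg P\le s$. The same commutation argument applied to $f_u^{[s+1]}(z) = \int^z \frac{1}{x} f_u^{[s]}(x)\,dx$ gives
\begin{equation*}
\omal(f_u^{[s+1]})(z) = \int^z \frac{1}{x}\,\omal(f_u^{[s]})(x)\,dx + c_{s+1}
\end{equation*}
for some constant $c_{s+1}\in\C$. Substituting the induction hypothesis and using linearity yields
\begin{equation*}
\omal(f_u^{[s+1]})(z) = (\omal f)_u^{[s+1]}(z) + \int^z \frac{1}{x}\,P(\log x)\,dx + c_{s+1}.
\end{equation*}
Since $\int^z \log(x)^k/x\,dx = \log(z)^{k+1}/(k+1)$ (the right-hand side has vanishing regularized value at $0$, so the primitivation formula used in the proof of Lemma~\ref{lemexplicitefns} applies directly), the middle term equals a polynomial $Q(\log z)$ of degree $\le s+1$, and setting $P_{\alpha,u,s+1,f}(X):=Q(X)+c_{s+1}$ closes the induction.

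There is no serious obstacle here; the only bookkeeping point is that the constants $c_s$ arise precisely because $\omal(f_u^{[s]})$ need not have vanishing regularized value at $0$ even though $f_u^{[s]}$ does by construction. They contribute the constant terms of $P_{\alpha,u,s,f}$, while the higher-degree terms in $\log z$ come from iteratively integrating powers of $\log$ against $dx/x$.
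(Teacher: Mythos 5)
Your proof is correct and follows essentially the same route as the paper: induction on $s$, using that $\omal$ commutes with differentiation so the two sides have derivatives differing by $\frac1z P(\log z)$, whose primitive is a polynomial in $\log z$ of one higher degree plus a constant of integration. The only cosmetic difference is that you phrase the argument via primitives and track the constants $c_s$ explicitly, whereas the paper differentiates both sides and absorbs the constant into $P_{\alpha,u,s,f}(0)$.
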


\begin{proof}
 If $s\geq 2$, the derivative of the left hand side is 
$$\omal \Big( \frac{d}{dz} f_u^{[s]} \Big) = \omal \Big( \frac1z f_u^{[s-1]} \Big) = \frac1z \omal ( f_u^{[s-1]} ).$$
Computing the derivative of the right hand side shows that Lemma \ref{lempoidsalpha} holds for $s$ if it does for $s-1$, by taking for $P_{\alpha,u,s,f}(0)$ the appropriate constant of integration. The same argument proves Lemma \ref{lempoidsalpha} for $s=1$.
 \end{proof}

\bigskip

\begin{lem}\label{lempoidscommute}
Let $u\geq 1$ be such that $u > -\capaf$, where $\capaf$ is given by the local expansion \eqref{eqNil} of $f$ at 0. 
 Then for any $s\geq 1$ we have
$$\omz (f_u^{[s]}) = (\omz f)_u^{[s]} .$$
\end{lem}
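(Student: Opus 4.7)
The plan is to argue by induction on $s \geq 1$, relying on the principle that a function in $\calN$ is determined by its derivative together with its regularized value $\ev$ at $0$, on the simply connected domain $\dpr$. First I would check that both $\omz (f_u^{[s]})$ and $(\omz f)_u^{[s]}$ lie in $\calN$ and have the same derivative. For $s=1$, since monodromy commutes with differentiation and since $z^{u-1}$ is single-valued at $0$ (as $u\in\N^*$), we have $\frac{\dd}{\dd z}\omz (f_u^{[1]}) = \omz(z^{u-1}f) = z^{u-1}\omz f = \frac{\dd}{\dd z}(\omz f)_u^{[1]}$. For the inductive step one uses $\frac{\dd}{\dd z}\omz(f_u^{[s+1]}) = \omz(\frac1z f_u^{[s]}) = \frac1z \omz f_u^{[s]} = \frac1z (\omz f)_u^{[s]}$ (by the induction hypothesis) $ = \frac{\dd}{\dd z}(\omz f)_u^{[s+1]}$. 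It follows in both cases that the two sides differ by a complex constant on the connected domain $\dpr$.

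To show this constant vanishes I would apply $\ev$ to both sides. By the definition of $\int^z$, one has $\ev((\omz f)_u^{[s]})=0$ for all $s$. The crux is therefore to prove that $\ev(\omz f_u^{[s]})=0$, which amounts to showing $\ev(\tau_0 f_u^{[s]}) = \ev(f_u^{[s]})$. The key observation, and where the hypothesis $u>-\capaf$ enters decisively, is that under this hypothesis the local expansion of $f_u^{[s]}$ at $0$ contains no term in $z^0$ (i.e.\ no pure power of $\log z$, and no constant). This is read off from Lemma~\ref{lemexplicitefns}: the first sum in \eqref{eq:fjs3010}, which consists precisely of pure-$\log$ terms $\log(z)^{s+i}$, has coefficients $a_{-u,i}$; but $-u<\capaf$ places $-u$ outside the index range of the expansion \eqref{eq:f3010} of $f$, so these coefficients are all zero; and the surviving sum has only exponents $z^{k+u}$ with $k\ge\capaf$ and $k\ne-u$, hence $k+u\ge\capaf+u>0$. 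Thus every exponent appearing in the expansion of $f_u^{[s]}$ is strictly positive.

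Because $\tau_0$ acts on the monomials by $z^k\log(z)^i\mapsto e^{2i\pi k} z^k(\log(z)+2i\pi)^i$, it preserves the set of exponents $k$ occurring in the local expansion; in particular no new $z^0$ term is created by $\tau_0$. Consequently $\ev(f_u^{[s]})=\ev(\tau_0 f_u^{[s]})=0$, so $\ev(\omz f_u^{[s]})=0$, and the constant is zero in each step of the induction. The main obstacle to watch is precisely this vanishing of $\ev\circ \omz$ on $f_u^{[s]}$; everything else amounts to the standard commutations of $\tau_0$ (and hence of $\omz$) with $\frac{\dd}{\dd z}$ and with multiplication by the single-valued functions $z^{u-1}$ and $1/z$. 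Once the identity is established for $s=1$ the induction propagates with no extra work.
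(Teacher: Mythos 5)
Your proof is correct, and its skeleton (induction on $s$, equal derivatives via the commutation of $\omz$ with differentiation, then pinning down the integration constant at $0$) is the same as the paper's; the difference lies in how the constant is killed. The paper argues softly: since $u>-\capaf$ all the integrals defining $f_u^{[s]}$ and $(\omz f)_u^{[s]}$ are convergent, so both functions have the honest limit $0$ at the origin, and Lemma~\ref{lempoids} with $K=0$ (finite limit at $0$ implies the variation is $o(1)$) gives $(\omz(f_u^{[s]}))(0)=0$; no explicit expansion is needed. You instead apply the regularized evaluation $\ev$ to both sides, getting $\ev\big((\omz f)_u^{[s]}\big)=0$ from the normalization of $\int^z$, and $\ev\big(\omz(f_u^{[s]})\big)=0$ from the explicit formula \eqref{eq:fjs3010} of Lemma~\ref{lemexplicitefns} (the pure-log terms carry the coefficients $a_{-u,i}$, which vanish since $-u<\capaf$, and all remaining exponents $k+u\geq\capaf+u>0$) together with the fact that the monodromy $\tau_0$ preserves the exponents of a Nilsson-class expansion, hence creates no $z^0$ term. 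Both uses of the hypothesis $u>-\capaf$ are two faces of the same fact; your route is more computational but bypasses Lemma~\ref{lempoids} entirely, while the paper's is shorter given that Lemma~\ref{lempoids} is already available. One small presentational point: when you let $\tau_0$ act termwise, it is cleanest to group the expansion into the finitely many blocks $h_{i,e}(z)z^e\log(z)^i$ with $h_{i,e}$ holomorphic at $0$, on which the action $z^e\mapsto e^{2i\pi e}z^e$, $\log z\mapsto \log z+2i\pi$ is immediate; this justifies the exponent-preservation you invoke.
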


\begin{proof}
 Since $u > -\capaf$, in the process of defining $f_u^{[s]}$ all integrals are convergent; the same holds for $
 (\omz f)_u^{[s]}$ because we may choose $\capade{\omz f} = \capaf$. Therefore we have $f_u^{[s]} (0) = (\omz f)_u^{[s]} (0)$; then Lemma \ref{lempoids} (with $K=0$) yields $(\omz (f_u^{[s]}))(0)=0$. Moreover $\omz$ commutes with differentiation, so that $\omz (f_u^{[s]})$ and $ (\omz f)_u^{[s]} $ have the same derivative (assuming inductively that either $s=1$ or Lemma \ref{lempoidscommute} holds with $s-1$). This concludes the proof of Lemma \ref{lempoidscommute}.
 \end{proof}

\section{Pad\'e approximation problem: statement and comments} \label{secpadestatement}
 
In this section we state and prove a Pad\'e approximation problem satisfied by the polynomials constructed in \cite{fns}, namely Theorem \ref{thpade}. With this aim in mind we recall in \S \ref{subsecnotationspade} the notation and the output of the construction of \cite{fns}. An interesting feature of this Pad\'e approximation problem is that non-holomorphic functions appear in the vanishing conditions; we comment this property in \S \ref{subsecnonholopade}, and explain how to count the linear equations equivalent to such conditions. This allows us to show in \S \ref{subseccompte} that this Pad\'e approximation problem has essentially as many equations as its number of unknowns.

\subsection{Notation and statement} \label{subsecnotationspade} 
 
 Let $F$ be a $G$-function, fixed throughout this paper, with Taylor coefficients at 0 in a number field $\K$. Let $L_F \in \K[z, \frac{\dd}{\dd z}]$ be a $G$-operator such that $L_FF=0$. To simplify the exposition and avoid dealing with microsolutions in \S \ref{subsecannulationsingularites} (see for instance \cite{fuchsien}), instead of $L_F$ we shall use the differential operator $L$ provided by the following lemma, and it is also a $G$-operator.

\begin{lem} \label{lemopdiff}
Let $L_0$ be a $G$-operator. There exists a non-negative integer $N$ such that, upon letting $L = (\frac{d}{dz})^N \circ L_F$:
\begin{equation} \label{eqopdiff}
\mbox{for any $f$ such that $Lf\in\C[z]$ there exists $P\in\C[z] $ such that $L(f+P)=0$.}
\end{equation}
\end{lem}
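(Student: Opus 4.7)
My proof plan is to reduce the lemma to the assertion that the composed operator $L := (\frac{\dd}{\dd z})^N \circ L_F$ is surjective from $\C[z]$ onto $\C[z]$, for a suitably chosen $N$. Indeed, given such surjectivity, for any $f$ with $Lf \in \C[z]$ one can find $P_0 \in \C[z]$ with $LP_0 = Lf$, and then $P := -P_0 \in \C[z]$ satisfies $L(f+P) = 0$, which is exactly \eqref{eqopdiff}.

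The main technical step is to verify that the image $V := L_F(\C[z])$ has finite codimension in $\C[z]$. Writing $L_F = \sum_{j=0}^{m} a_j(z)(\frac{\dd}{\dd z})^j$ with $m$ the order of $L_F$ and $a_j \in \K[z]$, and setting $d_0 := \max_j(\deg a_j - j)$, the operator $L_F$ sends $\C[z]_{\leq d}$ into $\C[z]_{\leq d+d_0}$ for every $d\geq 0$. Its kernel on $\C[z]_{\leq d}$ is contained in the space of polynomial solutions of $L_F y = 0$, which embeds into the $m$-dimensional local solution space at any regular point of $L_F$ and hence has $\C$-dimension at most $m$. A standard dimension count therefore gives
\[
\dim_{\C}\bigl(\C[z]_{\leq d+d_0}/L_F(\C[z]_{\leq d})\bigr) \leq (d+d_0+1) - (d+1-m) = d_0 + m
\]
for $d$ large enough; passing to the union over $d$ yields $\dim_\C(\C[z]/V) \leq d_0 + m =: c$.

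With this finiteness in hand, I would pick polynomials $e_1, \dots, e_c \in \C[z]$ whose classes form a basis of $\C[z]/V$, and set $N := 1 + \max_i \deg e_i$. Then the subspace $K_N \subset \C[z]$ of polynomials of degree $< N$ contains every $e_i$, so $K_N$ surjects onto $\C[z]/V$ and $V + K_N = \C[z]$. To conclude surjectivity of $L$, take any $Q \in \C[z]$ and choose some polynomial $N$-fold antiderivative $R \in \C[z]$ of $Q$; using $V + K_N = \C[z]$, write $R = L_F P - K$ with $P \in \C[z]$ and $K \in K_N$. Differentiating $N$ times then gives
\[
LP = \Big(\frac{\dd}{\dd z}\Big)^{\!N}(R+K) = \Big(\frac{\dd}{\dd z}\Big)^{\!N}R = Q,
\]
since $\deg K < N$ kills $K$ under $N$-fold differentiation.

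The main obstacle is the finite-codimension step; once this is established, the choice of $N$ and the verification above are essentially routine. The argument uses nothing specific to $G$-operators beyond finite order and polynomial coefficients, in agreement with the fact that \eqref{eqopdiff} is a purely algebraic statement about the cokernel of $L_F$ on $\C[z]$.
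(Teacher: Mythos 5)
Your proof is correct, but it takes a genuinely different route from the paper's. The paper works with the space $L_F^{-1}(\C[z])/\C[z]$ of (generally non-polynomial) functions mapped by $L_F$ into $\C[z]$: it asserts that this quotient is finite-dimensional, fixes representatives $f_1,\ldots,f_p$ of a basis, chooses $N$ with $\deg L_F f_i<N$ for all $i$, and then observes that any $f$ with $Lf\in\C[z]$ satisfies $L_F f\in\C[z]$, hence $f\equiv\lambda_1 f_1+\cdots+\lambda_p f_p \pmod{\C[z]}$, so that the corresponding $f+P$ lies in $L_F^{-1}(\C[z]_{<N})=\ker L$. You instead stay entirely inside $\C[z]$: you bound the codimension of $V=L_F(\C[z])$ by the degree count $\dim\bigl(\C[z]_{\le d+d_0}/L_F(\C[z]_{\le d})\bigr)\le d_0+m$ (and your passage to the union over $d$ is valid, since any finite family independent modulo $V$ is a fortiori independent modulo each $L_F(\C[z]_{\le d})\subseteq V$), choose $N$ beyond the degrees of coset representatives, and deduce that $L=(\frac{\dd}{\dd z})^N\circ L_F$ maps $\C[z]$ onto $\C[z]$, which formally implies \eqref{eqopdiff}. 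Your version buys self-containedness: it avoids the (asserted without proof, though standard) finite-dimensionality of $L_F^{-1}(\C[z])/\C[z]$ in the ambient function space, needs nothing about where $f$ lives beyond linearity of $L$, and actually yields the slightly stronger conclusion that $L$ is surjective on polynomials; the paper's version, by contrast, directly exhibits $\ker L$ as $L_F^{-1}(\C[z]_{<N})$, which is the description it reuses later. One cosmetic point: $\dim_\C(\C[z]/V)$ may be strictly less than $d_0+m$, so one should take a basis of whatever dimension the quotient has; this changes nothing in the argument.
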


\begin{proof} The quotient vector space $L_F^{-1}(\C[z])/\C[z]$ is finite-dimensional; it has a basis $(f_1\bmod \C[z], \ldots, f_p\bmod \C[z])$. For any $i\in\unp$ we have $L_F f_i \in\C[z]$; choose $N$ such that $\deg L_0 f_i < N$ for any $i$, and let $L = (\frac{d}{dz})^N \circ L_F$. For any $f$ such that $L f \in \C[z]$, we have $L_F f \in\C[z]$ so that $f = \lambda_1 f_1 + \ldots + \lambda_p f_p - P$ for some $\lambda_1,\ldots,\lambda_p \in \C$ and $P \in \C[z]$; then $f+P = \lambda_1 f_1 + \ldots + \lambda_p f_p \in L_F^{-1}(\cz_{<N}) = \ker L$. This concludes the proof of the lemma.\end{proof}

\bigskip

Let us recall the notation of \cite{fns} (see \S \ref{subseclienfns} for details). We let $\theta = z\frac{\dd}{\dd z}$, and denote by $\mu$ the order of $L$, by $\Sigma$ the set of all finite singularities of $L$, and by $\ker L$ the space of solutions of $L$. We also define
$$\dpr = \C \setminus \Big( \bigcup_{\alpha\in\Sigma\cup\{0\}} \Deltapr_{\alpha}\Big)$$
where $ \Deltapr_{\alpha} = \{t\alpha, \, t\in\R, \, t\geq1\}$ for $\alpha\in\Sigma\setminus\{0\}$ and $ \Deltapr_0 $ is a fixed closed half-line starting at 0 such that $ \Deltapr_0 \cap \Deltapr_\alpha = \emptyset$ for any $\alpha\in\Sigma\setminus\{0\}$.
We have $\dpr\subset \mathcal{D}_F$ where $\mathcal{D}_F$ is defined in the introduction; notice that we consider here $\Deltapr_\alpha$ for any $\alpha\in \Sigma\cup\{0\}$, not only for singularities of $F$.
 Then $\dpr$ is a simply connected dense open subset of $\C$; we have $0\notin\dpr$, and $\dpr$ does not contain any singularity of $L$. Therefore the notation and results of \S \ref{sec2} apply to $\dpr$. All elements of $\ker L$ will be considered as holomorphic functions on $\dpr$, and will often be expanded around singularities of $L$ (recall that $L$ is Fuchsian with rational exponents at all singularities).

\bigskip

We denote by $\hz$ the space of functions holomorphic at 0, and let 
\begin{equation}\label{eqdefell}
\ell := \dim \Big( \frac{ \hz\cap\ker L }{\cz \cap\ker L}\Big).
\end{equation}
We remark that this definition is equivalent to that of \cite{fns}. Indeed, with the latter, holomorphic solutions of the differential equation $Ly=0$ are given by $y(z)=\sum_{k=0}^\infty a_k z^k$ where the sequence $(a_k)$ is determined (for $k$ large enough) by a linear recurrence relation of order $\ell$ (see \cite[Lemma 1 and Step 1 of the proof of Lemma 2]{fns}). This means that $\ell = \dim ( \frac{ \hz\cap L^{-1}\cz }{\cz})$. To prove Eq. \eqref{eqdefell}, it remains to show that the canonical linear injective map $ \frac{ \hz\cap\ker L }{\cz \cap\ker L} \to \frac{ \hz\cap L^{-1}\cz }{\cz}$ is surjective. This is true using Lemma \ref{lemopdiff}: for any $f \in \hz$ such that $Lf\in\cz$, there exists $P \in \cz$ such that $f+P\in 
\hz\cap\ker L$.

\bigskip

We fix a sufficiently large positive integer $m$ 
 and let $\lun =\ell+m-1$ (see \S \ref{subseclienfns} for details). The only difference with \cite{fns} is that $m$ may have to be larger in the present paper; $\lun$ plays the same role in the present paper as $\ell_0$ in \cite{fns}. 
 
Let $r$, $S$ be integers such that $0\leq r \leq S$. As in the introduction we denote by $F(z) = \sum_{k=0}^{+\infty} A_k z^k$ the $G$-function we are interested in, with $A_k\in\K$ for any $k$. In \cite{fns} we have used in a central way the series
$$J_F(z):= n!^{S-r}\sum_{k=0}^\infty \frac{k(k-1)\cdots (k-rn+1)}{(k+1)^S(k+2)^S\cdots (k+n+1)^S} \,A_k \,z^{ k+n+1}
$$
 which is denoted by $z^{n+1}T_F(1/z)$ in \cite{fns}. For any $n\geq \lun$ we have constructed polynomials $P_{u,s,n}(X) $ and $\widetilde{P}_{u,n}(X) $ in $ \K[X]$ of respective 
degrees $\le n $ and $\le n+1 +S(\ell-1)$ such that 
for any $z$ inside the open disk of convergence of $F$, 
$$
J_F(z)=\sum_{u=1}^{\lun}\sum_{s=1}^S P_{u,s,n}(z) F_{u}^{[s]}(z)+ \sum_{u=0}^{\mu-1} \widetilde{P}_{u,n}(z) (\theta^uF)(z) 
$$
(see Lemma \ref{lem600} below; recall that $\theta =z\frac{\dd}{\dd z}$). These polynomials play a central role in the present paper. We remark that, as in \cite{fns}, these polynomials depend on the value of $m$ we have chosen; this value is fixed throughout the present paper.

One of our main steps is to prove that they make up a solution of the following Pad\'e approximation problem which involves
$$
J_f(z):=\sum_{u=1}^{\lun}\sum_{s=1}^S P_{u,s,n}(z) f_{u}^{[s]}(z)+ \sum_{u=0}^{\mu-1} \widetilde{P}_{u,n}(z) (\theta^u f)(z) 
$$
for all solutions $f$ of the differential equation $Ly=0$; all functions $ f_{u}^{[s]} $ and $\theta^u f$ involved in this formula are holomorphic on $\dpr$ (see \S \ref{subsecdeffjs} for the definition of $ f_{u}^{[s]} $ in this setting).

\begin{Th} \label{thpade}
Let $F$ be a $G$-function and $L$ be a $G$-operator such that $LF=0$ and \eqref{eqopdiff} holds. Let $r$, $S$ be integers such that $0 \leq r \leq S$; 
assume that $S$ is sufficiently large in terms of $L$.

Then there exists an integer $\kappa$ (depending only on $L$, $r$, $S$) such that for any sufficiently large integer $n$, the polynomials $P_{u,s,n}(z)$ (for $1\leq u \leq \ell_0$ and $1\leq s \leq S$) and $\Pti_{u,n}(z)$ (for $0\leq u \leq \mu-1$) have the following properties.
\begin{enumerate}
\item[$(i)$] For any $f\in\ker L$ we have as $z\to 0$:
\begin{eqnarray} 
J_f(z) &=& O(z^{(r+1)n+1}) \mbox{ if $f$ is holomorphic at 0,} \label{eqpadehol} \\
J_f(z) &=& O(z^{n-\kappa}) \mbox{ otherwise.} \label{eqpadenonhol} 
\end{eqnarray}
\item[$(ii)$] For any $\alpha\in\Sigma\setminus\{0\}$ and any $f\in\ker L$ we have
\begin{equation}\label{eqpadealpha}
 \omal(J_f)(z) = O((z-\alpha)^{(S-r)n-\kappa}), \quad z\to \alpha.
\end{equation}
Moreover the left hand side of Eq. \eqref{eqpadealpha} is identically zero if, and only if, $f$ is holomorphic at $\alpha$.
\item[$(iii)$] For any $u\in\{1,\ldots,m-1\}$ and any $s\in \{1,\ldots,S\}$ we have
\begin{equation}\label{eqpetitdeg}
P_{u,s,n}(z) = O(z^{n+1-u}), \quad z\to 0.
\end{equation}
\end{enumerate}
\end{Th}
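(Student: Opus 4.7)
The plan is to leverage the identity from \cite{fns},
\[
J_F(z)=\sum_{u=1}^{\lun}\sum_{s=1}^S P_{u,s,n}(z) F_{u}^{[s]}(z)+ \sum_{u=0}^{\mu-1} \widetilde{P}_{u,n}(z) (\theta^uF)(z),
\]
valid inside the disk of convergence of $F$, and to exploit that the polynomials $P_{u,s,n}$ and $\widetilde{P}_{u,n}$ of \cite{fns} are constructed from the operator $L$ alone, not from $F$. This lets me replay the key formal computation of \cite{fns} with $F$ replaced by an arbitrary $f\in\ker L$: I substitute its Nilsson-class expansion \eqref{eqNil} at $0$ and expand each $f_u^{[s]}$ via Lemma \ref{lemexplicitefns}, producing an explicit double series in $z$ and $\log z$ whose $\log$-free coefficient of $z^{k+n+1}$ carries the familiar falling factorial $n!^{S-r}\,k(k-1)\cdots(k-rn+1)/\bigl((k+1)^S\cdots(k+n+1)^S\bigr)$. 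When $f$ is holomorphic at $0$ there are no $\log$ terms and only $k\ge 0$ survive, so the falling factorial kills every contribution with $k<rn$ and \eqref{eqpadehol} follows. Otherwise the smallest surviving $k$ is $\capaf$, and since the exponents of $L$ at $0$ form a finite set of rationals, $\capaf$ is bounded below uniformly over $f\in\ker L$ in terms of $L$ alone; this fixes $\kappa$ and yields \eqref{eqpadenonhol}.

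The heart of the proof is assertion $(ii)$. If $f$ is holomorphic at $\alpha\in\Sigma\setminus\{0\}$, each $f_u^{[s]}$ is also holomorphic at $\alpha$, because the iterated integrations of \S \ref{subsecdeffjs} are against $x^{u-1}$ or $1/x$ and both preserve holomorphy away from $0$; so $J_f$ is single-valued around $\alpha$ and $\omal(J_f)\equiv 0$. If $f$ is singular at $\alpha$, I apply Lemma \ref{lempoidsalpha} to commute the variation $\omal$ past the iterated integrations, obtaining
\[
\omal(J_f)(z)=J_{\omal f}(z)+\sum_{u=1}^{\lun}\sum_{s=1}^S P_{u,s,n}(z)\,P_{\alpha,u,s,f}(\log z).
\]
Now $\omal f\in\ker L$ has a strictly simpler singular part at $\alpha$: by Lemma \ref{lempoids} it vanishes there to an order governed by the regularity of $f$ and its derivatives at $\alpha$. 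Replaying the formal series computation of part $(i)$ locally at $\alpha$ (which is legitimate because $L$ is Fuchsian with rational exponents at $\alpha$ too) will give the desired vanishing estimate on $J_{\omal f}$, while the polynomial-in-$\log z$ correction is controlled by the same cancellation mechanism that produced the Padé vanishing at $0$.

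Part $(iii)$ is a direct bookkeeping check on the closed form of $P_{u,s,n}$ derived in \cite{fns}, which arises as a coefficient in the partial fraction decomposition in the variable $k$ of the summand defining $J_F$: the coefficients attached to $1/(k+u)^s$ with $u\le m-1$ come out divisible by $z^{n+1-u}$. The non-degeneracy half of $(ii)$, namely $\omal(J_f)\not\equiv 0$ whenever $f$ is genuinely singular at $\alpha$, I expect to obtain by tracking which monodromy weight of $f$ at $\alpha$ produces the dominant logarithmic contribution in $J_f$ and invoking the non-vanishing of the leading polynomial $P_{1,1,n}$. The main obstacle throughout is $(ii)$: simultaneously estimating $J_{\omal f}$ to the required order $(S-r)n-\kappa$ and neutralizing the parasitic polynomial-in-$\log z$ correction requires coordinating the Padé construction at $0$ with the local Fuchsian structure of $L$ at every external singularity, which is genuinely non-formal and reflects the point emphasized in the introduction that this Padé approximation problem involves non-holomorphic functions.
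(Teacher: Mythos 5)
Your treatment of parts $(i)$ and $(iii)$ is essentially the paper's: the identity $J_f(z)=\sum_{j=1}^{n+1}\sum_{s=1}^S c_{j,s,n}z^{n+1-j}f_j^{[s]}(z)$, re-derived for an arbitrary $f\in\ker L$, gives \eqref{eqpadehol}, \eqref{eqpadenonhol} and \eqref{eqpetitdeg} (this is Lemmas \ref{lem600} and \ref{lemvraiesol}). One point you pass over: replaying the computation of \cite{fns} for non-holomorphic $f$ requires Proposition \ref{prop:1} to hold for \emph{all} solutions, which is exactly why $m$ must satisfy the strengthened condition \eqref{eqhypmgros}; this is minor but should be said.

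The genuine gap is in part $(ii)$. The vanishing \eqref{eqpadealpha} is not a formal consequence of the Pad\'e construction at $0$, and ``replaying the formal series computation of part $(i)$ locally at $\alpha$'' has no content: the functions $f_u^{[s]}$ and the coefficients $c_{j,s,n}$ are tied to expansions at the origin, and the local expansion of $J_{\omal f}$ at $\alpha$ carries no falling-factorial structure, so nothing forces $J_{\omal f}$ to be small at $\alpha$ — for a generic $g\in\ker L$ the function $J_g$ has no particular behaviour there, and the correction $\sum_{u,s}P_{u,s,n}(z)P_{\alpha,u,s,f}(\log z)$ coming from Lemma \ref{lempoidsalpha} is likewise of size $O(1)$ near $\alpha$; your decomposition splits $\omal(J_f)$ into two pieces neither of which is individually controlled. (Note also that Lemma \ref{lempoids} requires finite limits of the derivatives at $\alpha$, which fails for $f$ and for $\omal f$ when $f$ is singular there; in the paper it is applied to $J_h$, not to the solution itself.) The actual proof is analytic and global: given $f$, Theorem 1 of \cite{fuchsien} (microsolutions) combined with property \eqref{eqopdiff} produces $h\in\ker L$, holomorphic at $0$ and at every finite singularity $\beta\neq\alpha$, with $h-f$ holomorphic at $\alpha$, so that $\omal(J_f)=\omal(J_h)$. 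Since $h$ is holomorphic at $0$, $J_h$ is given by the convergent series \eqref{tfaderiver}; transference theorems give $h_k\approx |\alpha|^{-k}k^t(\log k)^e$, so for $r<S$ the series differentiated up to order $(S-r)n-\kappa$ (with $\kappa>t-S+1$) still converges absolutely on $|z|\le|\alpha|$, hence these derivatives of $J_h$ have finite limits at $\alpha$ and Lemma \ref{lempoids} yields \eqref{eqpadealpha}. The same coefficient asymptotics show that $J_h$ has a non-polar singularity of modulus $|\alpha|$, which by \eqref{tfatourner} must be $\alpha$ itself; this gives the ``only if'' half, for which your appeal to the non-vanishing of $P_{1,1,n}$ is not a substitute. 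None of this machinery — the reduction via microsolutions, the use of $r<S$, the coefficient asymptotics — appears in your proposal, and without it the key estimate \eqref{eqpadealpha} is unproved.
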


Theorem \ref{thpade} really involves a Pad\'e approximation problem since the left hand side of Eq. \eqref{eqpadealpha} is 
$$ \omal(J_f)(z) = \sum_{u=1}^{\lun}\sum_{s=1}^S P_{u,s,n}(z) \omal ( f_{u}^{[s]})(z)+ \sum_{u=0}^{\mu-1} \Pti_{u,n}(z) \omal (\theta^u f)(z) .$$
 
 \begin{Remark} It could be interesting to generalize Theorem \ref{thintroun} by restricting to values $F_n^{[s]}(z_0)$ for which $s$ has a given parity. This would probably involve a Pad\'e approximation problem similar to the one of Theorem \ref{thpade}, but with also vanishing conditions at infinity (as in \cite{JMPA}).  
 \end{Remark}

\subsection{Pad\'e approximation with non-holomorphic conditions} \label{subsecnonholopade}

\newcommand{\Id}{{\rm Id}}
\newcommand{\polP}{\Pi}

An important feature of the Pad\'e approximation problem of Theorem \ref{thpade} is that it involves vanishing conditions of non-holomorphic functions. In this section we explain how to count the number of linear equations equivalent to such a condition. Let us start with a well-known Pad\'e approximation problem, namely Beukers' for $\zeta(3)$. It is the following: find polynomials $P_1,\ldots,P_4$ of degree at most $n$ such that 
$$\left\{
\begin{array}{l}
R_1(z) := P_1 (z) \Li_2(1/z) + P_2 (z) \Li_1(1/z) + P_3 (z) = O(z^{-n-1}), \quad z\to\infty \\
\\
R_2(z) :=2 P_1 (z) \Li_3(1/z) + P_2 (z) \Li_2(1/z) + P_4 (z) = O(z^{-n-1}), \quad z\to\infty \\
\\
R_3(z) :=P_1 (z) \log(z) - P_2 (z) = O(1-z), \quad z\to1.
\end{array}\right.$$
 In Beukers' paper \cite{BeukersAperyPade} the last condition is $P_2(1)=0$; this equivalent formulation appears in \cite{JMPA}. The functions $R_1$ and $R_2$ are holomorphic at $\infty$; $R_3$ is holomorphic at 1. The first and last conditions are ``nice'' because $\tra\, ( 
\Li_2(1/z), \Li_1(1/z), 1 , 0)$ and $\tra \, ( \log(z), -1, 0, 0)$ are solutions of a common differential system $Y'=AY$. However the second one does not fit into this context. This makes it impossible to apply Shidlovsky's lemma to this Pad\'e approximation problem. Using non-holomorphic vanishing conditions will help us overcome this problem (see \cite{Pise, Huttner, Sorokincyclic} for other Pad\'e approximation problems where the same situation appears).

Since the derivative of $ \Li_2(1/z) \log(z) + 2 \Li_3(1/z) $ is $\frac{-1}z ( \Li_1(1/z) \log(z) + \Li_2(1/z))$, following \cite[\S 4.1]{Huttner}, we replace the condition on $R_2$ with 
\begin{multline*}
 R_1(z) \log(z) + R_2(z) = P_1 (z) \Big( \Li_2(1/z) \log(z) + 2 \Li_3(1/z) \Big) \\ 
 + P_2 (z) \Big( \Li_1(1/z) \log(z) + \Li_2(1/z)\Big) + P_3 (z) \log(z) + P_4(z) 
 = O(z^{-n-1}\log (z)), \quad z\to\infty.
\end{multline*}
Then we have three conditions (two at $\infty$ and one at 1) related to three solutions of the same differential system: the new one is 
$$
\tra \Big( \Li_2(1/z) \log(z) + 2 \Li_3(1/z), \quad \Li_1(1/z) \log(z) + \Li_2(1/z), \quad \log(z), \quad 1\Big).$$
This solution at $\infty$ is not holomorphic at $\infty$. It could look complicated, at first glance, to see how many linear equations are equivalent to this system. Indeed the condition $ R_1(z) \log(z) + R_2(z) =O(z^{-n-1}\log (z))$ itself means that both $R_1(z)$ and $ R_2(z) $ are $O(z^{-n-1})$, $z\to\infty$: it should be counted as two conditions involving holomorphic functions. However the condition on $R_1(z)$ already appears separately as the first vanishing condition of our Pad\'e approximation problem, so it has been counted already.

\bigskip

The following lemma elaborates upon this idea; it enables one to count the number of linear equations equivalent to such vanishing conditions (at a finite point $\alpha$; it would not be difficult to adapt it at $\infty$ in case it would be needed for another Pad\'e approximation problem). We shall apply it in \S \ref{subseccompte} to prove that the Pad\'e approximation problem of Theorem \ref{thpade} has (up to an additive constant) as many equations as its number of unknowns. Of course this computation is not really used to apply Shidlovsky's lemma, but it is useful to ensure that a Pad\'e approximation problem is reasonable.

\begin{lem} \label{lemcptesolE}
Let $N \geq 0$, $A\in M_q(\C(X))$ and $\alpha\in\C$; if $\alpha$ is a singularity of the differential system $Y'=AY$, assume it is a regular one. Let $E$ be a $\C$-vector space of local solutions at $\alpha$ of this differential system, such that $\omal Y \in E$ for any $Y\in E$. Then the conditions
\begin{equation} \label{eqcptesolE}
P_1(z) y_1(z) + \ldots + P_q(z) y_q(z) = O((z-\alpha)^{N}),
\end{equation}
for $Y = \, \tra (y_1,\ldots,y_q)\in E$, amount to $N \dim E + O(1)$ linear equations where $O(1)$ is bounded in terms of $A$ only.
\end{lem}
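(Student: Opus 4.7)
The plan is to change basis inside $E$ to expose the regular-singular structure at $\alpha$, reduce each vanishing condition to purely holomorphic ones by using the transcendence of $\log(z-\alpha)$, count them, and verify their linear independence by a triangular argument.

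First, I would invoke Frobenius theory at the regular singular point $\alpha$: the system $Y'=AY$ admits a local fundamental matrix of the form $\mathcal{Y}(z)=H(z)(z-\alpha)^{C}$, with $H$ a $q\times q$ matrix of holomorphic functions at $\alpha$, invertible there, and $C\in M_q(\C)$. After a suitable normalization, $E$ corresponds to a $C$-stable subspace $E_0\subset\C^q$, and choosing a Jordan basis of $C|_{E_0}$, each Jordan block of size $e+1$ with exponent $\rho$ contributes $e+1$ elements to a basis of $E$ of the form
\[
Y^{(\ell)}(z) \;=\; (z-\alpha)^{\rho}\sum_{i=0}^{\ell}\frac{\log(z-\alpha)^{\ell-i}}{(\ell-i)!}\,V_{i}(z), \qquad \ell=0,1,\ldots,e,
\]
where $V_0,\ldots,V_e$ are $\C^q$-valued functions holomorphic at $\alpha$ whose values $V_0(\alpha),\ldots,V_e(\alpha)$ are linearly independent (being distinct columns of the invertible matrix $H(\alpha)$). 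The eigenvalues $\rho$ and the Jordan sizes $e+1$ of $C|_{E_0}$ are bounded by those of $C$, hence depend only on $A$.

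Next, setting $Q_{i}(z)=\sum_{k}P_{k}(z)V_{i,k}(z)$ (a scalar function holomorphic at $\alpha$), I would rewrite the quantity $R_{\ell}$ associated with the basis element $Y^{(\ell)}$ as
\[
R_{\ell}(z) \;=\; (z-\alpha)^{\rho}\sum_{i=0}^{\ell}\frac{\log(z-\alpha)^{\ell-i}}{(\ell-i)!}\,Q_{i}(z).
\]
Since $\log(z-\alpha)$ is transcendental over the field of meromorphic germs at $\alpha$, the condition $R_{\ell}(z)=O((z-\alpha)^{N})$ is equivalent to $Q_{i}(z)=O((z-\alpha)^{N-\rho})$ for every $i\in\{0,\ldots,\ell\}$. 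Aggregating the conditions over a single Jordan chain therefore reduces to requiring $Q_{i}(z)=O((z-\alpha)^{N-\rho})$ for all $i\in\{0,\ldots,e\}$, i.e.\ $(e+1)\max(0,\lceil N-\rho\rceil)$ scalar linear conditions on the Taylor coefficients of $P_1,\ldots,P_q$ at $\alpha$. Summing over all Jordan chains, whose sizes total $\dim E_{0}=\dim E$, yields $N\dim E+O(1)$ conditions with implicit constant depending only on the spectrum and Jordan sizes of $C|_{E_0}$, hence only on $A$.

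Finally, I would check that these $N\dim E+O(1)$ equations are linearly independent. Writing $P_{k}(z)=\sum_{t}p_{k,t}(z-\alpha)^{t}$ and $V_{i,k}(z)=\sum_{s}v_{i,k,s}(z-\alpha)^{s}$, the condition ``$m$-th Taylor coefficient of $Q_{i}$ vanishes'' is the linear functional $\phi_{i,m}=\sum_{t+s=m}\sum_{k}p_{k,t}v_{i,k,s}$ on the $p_{k,t}$'s. Given a hypothetical dependence $\sum c_{i,m}\phi_{i,m}=0$, let $m_{0}$ be the largest $m$ with some $c_{i,m_{0}}\neq 0$; reading off the coefficient of $p_{k,m_{0}}$ in the sum yields $\sum_{i}c_{i,m_{0}}v_{i,k,0}=0$ for every $k$, i.e.\ $\sum_{i}c_{i,m_{0}}V_{i}(\alpha)=0$, and the linear independence of the $V_{i}(\alpha)$'s across all chains forces every $c_{i,m_{0}}=0$, a contradiction. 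The hard part will be the bookkeeping with logarithms: a priori, a single condition $R_{\ell}=O((z-\alpha)^{N})$ involving $\ell+1$ distinct powers of $\log(z-\alpha)$ could impose up to $(e+1)N$ equations, but the key observation is that the $e+1$ joint conditions inside a size-$(e+1)$ Jordan chain collapse exactly to $e+1$ holomorphic vanishing conditions on the $Q_{i}$'s, thanks to the transcendence of the logarithm, and this is what keeps the total count at $N\dim E+O(1)$.
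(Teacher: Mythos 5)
Your counting argument follows the same route as the paper: split $E$ into chains adapted to the local monodromy at $\alpha$, write each basis element as $(z-\alpha)^{\rho}$ times a polynomial in $\log(z-\alpha)$ with vector coefficients holomorphic at $\alpha$, use the non-cancellation of distinct powers of $\log(z-\alpha)$ to convert the conditions of one chain into vanishing conditions on functions holomorphic at $\alpha$ (this equivalence holds only up to powers of $\log(z-\alpha)$, a caveat the paper also makes and which is harmless for the count), and count $N+O(1)$ equations per function, the $O(1)$ being controlled by the exponents, hence by $A$. That part is sound, provided the structural input is stated correctly.

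The genuine gap is the normal form you invoke. At a regular singular point one only gets a fundamental matrix $\mathcal{Y}(z)=H(z)(z-\alpha)^{C}$ with $H$ \emph{meromorphic} at $\alpha$; asking $H$ to be holomorphic and invertible at $\alpha$ means the system is holomorphically equivalent to an Euler system, which fails in general (resonant exponents, or a regular singularity at which $A$ has a pole of order $\geq 2$ -- and the systems arising in \S 5 of the paper are of this kind). For instance the system $Y'=\begin{pmatrix} 0 & (z-\alpha)^{-2}\\ 0 & 0\end{pmatrix}Y$, whose solutions are spanned by $(1,0)$ and $(-(z-\alpha)^{-1},1)$ (transposed), is regular singular with trivial monodromy, yet every solution, after multiplication by the appropriate power of $(z-\alpha)$, takes at $\alpha$ a value proportional to $(1,0)$; so no fundamental matrix with $H(\alpha)$ invertible exists. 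Consequently your assertion that $V_0(\alpha),\ldots,V_e(\alpha)$ are linearly independent is unjustified (they may even vanish), and the final triangular argument for the independence of the functionals $\phi_{i,m}$ collapses. There is also a smaller point: stability of $E$ under $\omal$ gives stability of $E_0$ under the monodromy $e^{2i\pi C}$, which implies $C$-stability only if $C$ is chosen as a polynomial in the monodromy, a choice generally incompatible with normalizing $H$. Two remedies: first, the independence of the equations is not part of the statement -- the lemma only says the conditions can be expressed as $N\dim E+O(1)$ linear equations, and is used in \S 3.3 purely for counting -- so you can simply drop your last paragraph; second, the counting half of your argument survives with the correct (meromorphic) normal form after shifting $\rho$ by a bounded integer so that the $V_i$ are holomorphic, which is exactly what the paper does by working with cyclic chains of $\omal$ restricted to $E$, the minimal exponent $e_i$, and coefficient functions $\kappa_{j,t}$ that are merely holomorphic with the top one not identically zero.
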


\begin{Remark} If all $Y\in E$ have components holomorphic at $\alpha$, Lemma \ref{lemcptesolE} is trivial: one writes 
Eq. \eqref{eqcptesolE} for all elements $Y$ of a basis of $E$. The point of Lemma \ref{lemcptesolE} is that the conditions \eqref{eqcptesolE} can be translated into $\dim E$ vanishing conditions of holomorphic functions.
\end{Remark}

\bigskip

\begin{proof} Let $\lambda_1$, \ldots, $\lambda_p$ denote the pairwise distinct eigenvalues of $\omal$, seen as a linear map $E\to E$. For any $i\in\unp$, let $E_i = \ker ((\omal - \lambda_i \Id)^{\dim E})$; then we have $E = E_1\oplus \ldots\oplus E_p$. 

For each $i\in\unp$ there exist cyclic subspaces $E_{i,u}$ (for $1\leq u \leq U_i$) such that $E_i = E_{i,1}\oplus \ldots\oplus E_{i,U_i}$. The assertion that 
$E_{i,u}$ is cyclic means that there exists $Y^{\{i,u\}}\in E_{i,u}$ such that $E_{i,u}$ is the set of all $\polP (\omal)(Y^{\{i,u\}})$ with $\polP \in\C[X]$. Now let $V_{i,u}$ denote the minimal degree of a non-zero polynomial $\polP \in\C[X]$ such that $\polP (\omal)(Y^{\{i,u\}}) =0$. Then the $\omal^v( Y^{\{i,u\}}) $, for $1\leq i \leq p$, $1\leq u \leq U_i$ and $0 \leq v \leq V_{i,u}-1$, make up a basis of $E$. Moreover the same property holds if each $\omal^v( Y^{\{i,u\}}) $ is replaced with $\polP _{i,u,v} (\omal) ( Y^{\{i,u\}}) $ where $\polP _{i,u,v}\in\C[X]$ is an arbitrary polynomial of degree $v$. 

In concrete terms, for any $i\in\unp$ we denote by $e_i$ the exponent at $\alpha$ of the differential system $Y'=AY$ with the least real part among those such that $\exp(2i\pi e_i) = \lambda_i$. Then we have 
$ Y^{\{i,u\}} = \, \tra ( y_1 ^{\{i,u\}}, \ldots, y_q ^{\{i,u\}})$ with 
$$y_j ^{\{i,u\}} (z) = (z-\alpha)^{e_i} \sum_{t=0} ^{ V_{i,u}-1} \kappa_{j,t} ^{\{i,u\}}(z) \frac1{t!} \log(z-\alpha)^t \mbox{ for any } j\in\unq,$$
where $\kappa_{j,t} ^{\{i,u\}}(z) $ is holomorphic at $\alpha$ for any $j$, $t$, and $\kappa_{j, V_{i,u}-1} ^{\{i,u\}}(z) $ is not identically zero for at least one $j\in\unq$. Choosing the above-mentioned polynomials $\polP _{i,u,v}$ in a suitable way, we obtain a basis of $E$ consisting of vectors $\polP _{i,u,v} (\omal) ( Y^{\{i,u\}}) = \tra ( y_1 ^{\{i,u,v\}}, \ldots, y_q ^{\{i,u,v\}})$ with the following expression:
$$y_j ^{\{i,u,v\}} (z) = (z-\alpha)^{e_i} \sum_{t=v} ^{ V_{i,u}-1} \kappa_{j,t} ^{\{i,u\}}(z) \frac1{(t-v)!} \log(z-\alpha)^{t-v} \mbox{ for any } j\in\unq.$$
The point here is that $\kappa_{j,t} ^{\{i,u\}}(z) $ does not depend on $v$. This basis is of the same form as those produced by Frobenius' method (see for instance \cite[Eq. (4.4)]{ateo}); the difference here is that $E$ is not (in general) the space of all solutions of the differential system.

This basis allows us now to count the number of linear equations imposed by Eq. \eqref{eqcptesolE} for any $Y \in E$, or equivalently for all $\polP _{i,u,v} (\omal) ( Y^{\{i,u\}}) $ as $i$, $u$, $v$ vary. Indeed let us fix $1\leq i \leq p$ and $1\leq u \leq U_i$. Then for any 
 $0 \leq v \leq V_{i,u}-1$ we have 
 $$\sum_{j=1}^q P_j(z) y_j ^{\{i,u,v\}} (z) = (z-\alpha)^{e_i} \sum_{t=v} ^{ V_{i,u}-1} \Big( \sum_{j=1}^q P_j(z) \kappa_{j,t} ^{\{i,u\}}(z) \Big) \frac1{(t-v)!} \log(z-\alpha)^{t-v} .$$
Therefore the vanishing conditions \eqref{eqcptesolE} for $\polP _{i,u,v} (\omal) ( Y^{\{i,u\}}) $ with $v=0, \ldots, V_{i,u}-1$ are equivalent (up to powers of $\log(z-\alpha)$) to the following $V_{i,u}$ vanishing conditions concerning functions {\em holomorphic at $\alpha$}: 
\begin{equation} \label{equnt}
 \sum_{j=1}^q P_j(z) \kappa_{j,t} ^{\{i,u\}}(z) = O((z-\alpha)^{N-e_i}), \quad t=0, \ldots, V_{i,u}-1.
 \end{equation} 
As long as $i$ and $u$ are fixed, Eq. \eqref{equnt} is equivalent to $N V_{i,u} + O(1)$ linear equations, because $e_1$, \ldots, $e_p$ depend only on $A$. Letting $i$ and $u$ vary, we obtain $N \dim E + O(1)$ linear equations: this concludes the proof of Lemma \ref{lemcptesolE}.
\end{proof}

\subsection{Counting equations and unknowns in Theorem \ref{thpade}} \label{subseccompte}

In this section we shall prove that in the Pad\'e approximation problem of Theorem \ref{thpade}, the difference between the number of unknowns and the number of equations is bounded from above independently from $n$. With this aim in view, we denote by $O(1)$ any quantity that can be bounded in terms of $L$, $r$, $S$ only (independently from $n$). A parallel but more precise argument will be used in \S \ref{subsecshidfin} to check the assumptions of Shidlovsky's lemma.

\bigskip

The unknowns are the coefficients of the following polynomials:
\begin{itemize}
\item $P_{u,s,n}$ of degree at most $n$, for $1\leq u \leq \lun$ and $1\leq s \leq S$, that is $(n+1)\lun S$ coefficients.
\item $\Pti_{u,n}$ of degree at most $n+1+S(\ell-1)$, for $0\leq u \leq \mu-1$, that is $\mu(n+2+ S(\ell-1))$ coefficients.
\end{itemize}
To sum up, this Pad\'e approximation problem involves
$$\Big( \mu + \lun S \Big) n + O(1)$$
coefficients. Let us count now the number of equations.

\bigskip

\begin{itemize}
\item Polynomial solutions of the differential equation $Ly=0$ have to be considered separately. Indeed for any non-zero $f\in\cz\cap\ker L$, Eq. \eqref{eqpadehol} seems to be equivalent to $(r+1)n+1$ linear equations. However this is not the case: as Eq. \eqref{eqJholo}
 shows, $J_f(z)$ is a polynomial of degree at most $n+O(1)$: Eq. \eqref{eqpadehol} means that $J_f(z)$ is identically zero. This should be understood as a system of $n+O(1)$ linear equations for each $f$ in a basis of $\cz\cap\ker L$: up to $O(1)$, this is the same number of equations as Eq. \eqref{eqpadenonhol}. 
 We would like to point out that, as far as we know, Theorem \ref{thpade} is the first Pad\'e approximation problem in this setting with this kind of conditions. The only new feature of the general version of Shidlovsky's lemma proved in \S \ref{secshid} below (with respect to that of \cite{SFcaract}) is to deal with this situation through the parameter $\roro$. 

\item The condition \eqref{eqpadenonhol}, taken for $f$ in a basis of $\ker L$ (including possible polynomial solutions), amounts to 
$$\mu n + O(1)$$
linear equations since $\mu=\dim\ker L$. This is not obvious a priori because elements of $\ker L$ are not always holomorphic at 0. Let us deduce this property from Lemma~\ref{lemcptesolE}. With this aim in mind, we 
 denote by $E$ the set of tuples
$$ \Big( (f^{[s]}_{u})_{m\leq u \leq \lun, 1\leq s \leq S}, \, ( \theta^u f )_{0\leq u \leq \mu-1}\Big)$$
with $f\in\ker L$. Since $\omz$ commutes with differentiation we have $\omz ( \theta^u f) = \theta^u (\omz f)$, and Lemma \ref{lempoidscommute} yields $\omz ( f^{[s]}_{u}) = (\omz f)^{[s]}_{u}$ for any $u\geq m$. Therefore $E$ is stable under $\omz$ because $\ker L$ is: Lemma \ref{lemcptesolE} with $N = n-\kappa$
 concludes the proof that Eq. \eqref{eqpadenonhol} amounts to 
 $\mu n + O(1)$ linear equations.

\item For each $f$ in a basis of $\hz\cap\ker L$, Eq. \eqref{eqpadehol} seems to be equivalent to $(r+1)n+O(1)$ linear equations. However Eq. \eqref{eqpadenonhol} has been taken into account already, so only $ r n+O(1)$ equations are new. Moreover, if $f$ is a polynomial this is misleading (see above): in this case Eq. \eqref{eqpadehol} amounts only to $O(1)$ new equations with respect to Eq. \eqref{eqpadenonhol}. To conclude, Eq. \eqref{eqpadehol} should be seen as a system of 
$$ rn \dim \Big( \frac{ \hz\cap\ker L }{\cz \cap\ker L}\Big) + O(1) = \ell r n +O(1)$$
new linear equations.

\item Let us consider Eq. \eqref{eqpadealpha} now. 
For $\alpha\in\Sigma\setminus\{0\}$ we denote by $E_\alpha$ the set of tuples
$$\Psi_\alpha ( f) = \Big( ( \omal ( f^{[s]}_{u}))_{1\leq u \leq \lun, 1\leq s \leq S}, \, ( \omal( \theta^u f ))_{0\leq u \leq \mu-1}\Big)$$
with $f\in \ker L$. Let $\hal$ denote the space of functions holomorphic at $\alpha$. If $f\in\hal\cap\ker L$ then all $f^{[s]}_{u}$ and all $ \theta^u f$ are holomorphic at $\alpha$, so that $f\in\ker\Psi_\alpha$. Now if $f\in\ker\Psi_\alpha\subset \ker L $ is non-zero then $f$ has (at most) a regular singularity at $\alpha$, and $\omal f = 0$ so that $f$ is meromorphic at $\alpha$: there exists $k\in\Z$ and $c\in\C\setminus\{0\}$ such that $f(z)\sim c (z-\alpha)^k$ as $z\to\alpha$. If $k<0$ then $f_u^{[-k]}$ has a logarithmic divergence at $\alpha$ for any $u$, which is impossible because $f\in\ker\Psi_\alpha$ and $-k \leq S$ (since the order of $\alpha$ as a pole of a solution of $L$ is bounded in terms of $L$). Therefore $k\geq 0$: this concludes the proof that $\ker\Psi_\alpha = \hal\cap\ker L$. Accordingly, the space $E_\alpha=\Im \Psi_\alpha$ is isomorphic to $\frac{\ker L}{ \hal\cap \ker L}$. 

 We have $\omal ( \omal( \theta^u f )) = \omal( \theta^u(\omal f ))$ since $\omal$ commutes with differentiation, and $\omal ( \omal ( f^{[s]}_{u})) = \omal ( (\omal f)^{[s]}_{u})$ using Lemma \ref{lempoidsalpha}. Therefore
 $E_\alpha$ is stable under $\omal$ because $ \ker L$ is. Lemma \ref{lemcptesolE} with $N = (S-r)n-\kappa$ shows that Eq. \eqref{eqpadealpha} amounts to 
 $$ (S-r)n \dim E_\alpha + O(1)$$ linear conditions. Now recall that 
 $L^{-1}( \hal )/ \hal $ is the vector space of microsolutions at $\alpha$; Kashiwara's theorem \cite{Kashiwara} (see also \S 1.2 of \cite{Pham}) asserts that this vector space has dimension $m_\alpha$, the multiplicity of $\alpha$ as a singularity of $L$. For any $f\in L^{-1}( \hal )$, Theorem 1 of \cite{fuchsien} provides $f_0 \in \hal$ such that $L(f+f_0)\in\C[z]$; then \eqref{eqopdiff} gives $P\in\C[z]\subset \hal $ such that $f+f_0+P\in\ker L$. This proves that the canonical injective map $\frac{ \ker L }{\hal \cap \ker L } \to 
\frac{ L^{-1}( \hal )}{\hal }$ is bijective, so that 
$$\dim E_\alpha = \dim \Big( \frac{ \ker L }{\hal\cap \ker L } \Big) = \dim \Big( \frac{ L^{-1}( \hal )}{\hal } \Big) = m_\alpha.$$
Denote by $\delta$ the degree of $L$, and by $\omega$ the multiplicity of $0$ as a singularity of $L$. Since $0$ is a regular singularity we have $\delta = \omega + \sum_{\alpha\neq 0} m_\alpha$ so that $ \sum_{\alpha\neq 0} m_\alpha = \delta - \omega =\ell $ (using the definition of $\ell$ given in \cite{fns}, which is equivalent to the one used in the present paper: see the remark after Eq. \eqref{eqdefell}). Therefore combining Eqns. \eqref{eqpadealpha} as $\alpha$ varies in $\Sigma\setminus\{0\}$ amounts to $\ell (S-r)n +O(1)$ linear equations.
 \item
Eq. \eqref{eqpetitdeg} amounts to $n+1-u$ linear conditions on $P_{u,s,n}$; as $u$ and $s$ vary, they provide $(m-1)Sn +O(1)$ equations.
\end{itemize}
To sum up, Theorem \ref{thpade} amounts to $(\mu +\ell S + (m-1)S) n + O(1)$ linear equations: up to an additive constant (independent of $n$), this is exactly the number of unknowns since $\ell +m-1 = \lun$.

\section{Proof of Theorem \ref{thpade}} \label{secpadepreuve}

In this section we prove Theorem \ref{thpade} stated in \S \ref{subsecnotationspade}. Our approach is based on the main technical result of \cite{fns}, recalled (together with the notation) in \S \ref{subseclienfns}. Following \cite[Lemma 4]{fns} we deal in \S \ref{subsecholenzero} with holomorphic solutions at 0. Then we move to the other local vanishing conditions of Theorem \ref{thpade}, involving other solutions at 0 (\S \ref {subsecnonholoother}) and solutions at non-zero finite singularities (\S \ref{subsecannulationsingularites}). 

\bigskip

Throughout this section we keep the notation of \S \ref{subsecnotationspade}; in particular we use the $G$-operator $L$ provided by Lemma \ref{lemopdiff}.

\subsection{Prerequisites and notations from \cite{fns}} \label{subseclienfns}

We recall that $\ell$ has been defined in \S \ref{subsecnotationspade} (see Eq. \eqref{eqdefell}); we also define 
\begin{equation} \label{eqdefnz}
\capazero := \min(e_1,\ldots,e_\mu,0)
\end{equation}
where $e_1,\ldots,e_\mu$ are the exponents of $L$ at zero (including possibly non-integer ones). Denoting by 
$\widehat{f}_1$, \ldots, $\widehat{f}_\eta$ the {\em integer} exponents at $\infty$ (with $\eta=0$ if there isn't any),
throughout this paper we fix an integer $m\geq 1$ such that 
\begin{equation} \label{eqhypmgros}
m > -\capazero \mbox{ and } m > \widehat{f}_{j} - \ell \; \mbox{ for all } \; 1\le j \leq \eta
\end{equation}
and let $\lun = \ell+m-1$. The only difference with \cite{fns} is that our assumption \eqref{eqhypmgros} on $m$ is more restrictive than the one of \cite{fns} (where only the second inequality appears), so that we may have to take a larger value of $m$. The integer $\lun$ plays the same role as $\ell_0$ in \cite{fns}.   The reason for this is that in \cite{fns} only a specific solution $F$, holomorphic at 0, is involved. On the opposite, in the present paper we have to consider all solutions, holomorphic at 0 or not, of the differential equation $Ly=0$. However this larger value of $m$ does not have any impact on Theorem \ref{thintroun}. 

\bigskip

By definition of $\capazero$, any local solution of $Lf=0$ at the origin can be written as 
$$f(z) = \sum_{k\in \Q \atop k\geq \capazero} \sum_{i=0}^{e} a_{k,i} z^k \log (z)^i$$
since $L$ is Fuchsian with rational exponents. Then for any $n \geq m > -\capazero$ and any $s\geq 1$, all integrals involved in the definition of 
 $f_{n}^{[s]}$ (see \S \ref{subsecdeffjs}) are convergent integrals. Therefore the following proposition can be proved exactly as in \cite{fns} (where in $(i)$ $f$ is assumed to be holomorphic at 0); recall that $\dpr$ has been defined after Lemma \ref{lemopdiff} in \S \ref{subsecnotationspade}.

\begin{Prop} \label{prop:1} For any $s \geq 1$ and any $n\geq \nun$:

$(i)$ There exist some algebraic numbers $\ppP_{j,t,s,n}\in \K$, and some polynomials $\qqQ_{j,s,n}(z) $ in $ \K[z]$ of degree at most $n+s(\ell -1)$, which depend also on $L$ (but not on $f$), such that for any $f\in\ker L$ and any $z\in\dpr$:
\begin{equation}\label{eq:406}
f_n^{[s]}(z)=\sum_{t=1}^{s}\sum_{j=\nun}^{\lun} \ppP_{j,t,s,n}f_j^{[t]}(z) + \sum_{j=0}^{\mu-1} \qqQ_{j,s,n}(z)(\theta^j f)(z) .
\end{equation}

$(ii)$ All Galois conjugates of all the numbers $ \ppP_{j,t,s,n} $ ($m \leq j\le\lun$, $t\le s$), and all Galois conjugates of all the coefficients of the polynomials $\qqQ_{j,s,n}(z)$ ($j\le \mu-1$), have modulus less than $H(s,n)>0$ with 
$$
\limsup_{n\to + \infty } H(s,n)^{1/n} \le C_1^s
$$ 
for some constant $C_1\ge 1$ which depends only on $L$.

$(iii)$ Let $D(s,n)>0$ denote the least common denominator of the algebraic numbers $\ppP_{j,t,s,n'}$ ($m \leq j\le\lun$, $t\le s$, $n'\leq n$) and of the coefficients of the polynomials $\qqQ_{j,s,n'}(z)$ ($j\le \mu-1$, $n'\le n$); then
$$
\limsup_{n\to + \infty } D(s,n)^{1/n} \le C_2^s
$$ 
for some constant $C_2\ge 1$ which depends only on $L$.
\end{Prop}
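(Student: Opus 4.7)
The plan is to adapt the proof of the analogous proposition in \cite{fns}---where $f$ is assumed to be holomorphic at $0$---by observing that the only place where that hypothesis is used is to ensure the convergence of the integrals appearing in the definition of $f_n^{[s]}$; under our hypotheses $n\geq \nun > -\capazero$ (first inequality of \eqref{eqhypmgros}) these integrals remain convergent for every solution $f\in\ker L$, regardless of whether $f$ is holomorphic at $0$ or not, so the entire formal construction of \cite{fns} goes through unchanged.

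For $(i)$, I would proceed by induction on $s$. The base case $s=1$ is the heart of the matter: starting from $f_n^{[1]}(z)=\int^z x^{n-1} f(x)\,\dd x$ and using the relation $Lf=0$ to write every $\theta^j f$ with $j\geq\mu$ as a $\K(z)$-linear combination of $f,\theta f,\ldots,\theta^{\mu-1} f$, one rewrites $x^{n-1}f(x)$ by iterated integration by parts as $\frac{\dd}{\dd x}\bigl(\sum_{j=0}^{\mu-1}\qqQ_{j,1,n}(x)(\theta^j f)(x)\bigr)$ plus a finite $\K$-linear combination of $x^{j-1}f(x)$ for $j$ in a fixed range. The essential algebraic fact, already established in \cite{fns}, is that modulo the $\K(z)$-module generated by the $\theta^j f$'s these residual indices $j$ can be confined to the interval $[\nun,\lun]$ independently of $n$; this is precisely where the definition \eqref{eqdefell} of $\ell$ and the choice \eqref{eqhypmgros} of $\nun$ enter. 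Applying $\int^z$ then yields \eqref{eq:406} for $s=1$. For the induction step, assuming \eqref{eq:406} for $s$, I would integrate $\frac{1}{x}f_n^{[s]}(x)$ term by term: each $\frac{1}{x}f_j^{[t]}(x)$ integrates directly to $f_j^{[t+1]}$, while each $\int^z \frac{\qqQ_{j,s,n}(x)}{x}(\theta^j f)(x)\,\dd x$ is treated by another round of integration by parts modulo $Lf=0$, producing new polynomial coefficients of degree at most $n+(s+1)(\ell-1)$ against the $\theta^j f$'s, plus finitely many $f_j^{[1]}$ contributions to be absorbed in the first sum.

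Parts $(ii)$ and $(iii)$ are the quantitative counterparts of this construction, obtained exactly as in \cite{fns}: the numbers $\ppP_{j,t,s,n}$ and the coefficients of $\qqQ_{j,s,n}(z)$ are built from iterated derivatives of the companion matrix of $L$ acting on a fixed basis, and the theorem of André--Chudnovsky--Katz on $G$-operators furnishes the requisite geometric bounds on both the archimedean sizes of all Galois conjugates and the common denominators of these iterates. Each additional round of integration by parts in the induction on $s$ multiplies the archimedean size and the common denominator by at most a bounded factor, accounting for the exponential dependence $C_1^s$ and $C_2^s$, with the same constants as in \cite{fns}. No genuinely new obstacle arises with respect to \cite{fns}; the single point to verify is the convergence of every integral in the construction, which is guaranteed by the first inequality in \eqref{eqhypmgros}, newly imposed here precisely so that the argument applies uniformly to all $f\in\ker L$.
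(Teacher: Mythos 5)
Your proposal is correct and follows the same route as the paper: the paper's entire argument is to note that since $m > -\capazero$ (the new, first inequality in \eqref{eqhypmgros}), every integral in the definition of $f_n^{[s]}$ converges for all $f\in\ker L$, so the proof of Proposition 1 of \cite{fns} applies verbatim, including the estimates in parts $(ii)$ and $(iii)$. Your additional sketch of the inductive construction from \cite{fns} is consistent with that reference and does not change the substance.
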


\begin{Remark} \label{remcoro} The difference between this result and \cite[Proposition 1]{fns} is that it applies to any solution of the differential equation $Ly=0$, whereas in \cite[Proposition 1]{fns} $f$ is assumed to be holomorphic at 0. This is the reason why $m$ has to be chosen larger in Eq. \eqref{eqhypmgros}. To deduce Corollary \ref{coro:tanintro1} from Theorem \ref{thintroun} in the introduction, we have used the analytic continuation to $\mathcal{D}_F$ of Identity (5.2) in \cite{fns} because with the notation of \cite{fns} it is possible in this example to choose $m=\ell_0=1$, whereas for some values of the parameters Eq. \eqref{eqhypmgros} does not hold with $m=1$ (so that we cannot choose $\lun=1$). This is not a problem since the deduction of Corollary \ref{coro:tanintro1} from Theorem \ref{thintroun} involves only $F$ and no other solution of the differential equation.
\end{Remark}

\subsection{Holomorphic solutions at 0} \label{subsecholenzero}

In this section we prove Eqns. \eqref{eqpadehol} and \eqref{eqpetitdeg} of Theorem \ref{thpade} involving holomorphic solutions at 0. They follow from Lemma \ref{lem600} which is essentially the construction of $P_{u,s,n}(X) $ and $\widetilde{P}_{u,n}(X) $ in \cite{fns}.

\bigskip

Let $f$ be a function, holomorphic on $\dpr$ and at 0, such that $Lf=0$. Denoting by $ \sum_{k=0}^\infty a_k z^k$ its local expansion at 0, we recall from \S \ref{subsecnotationspade} that 
\begin{equation}\label{eqJholo}
J_f(z) = n!^{S-r}\sum_{k=0}^\infty \frac{k(k-1)\cdots (k-rn+1)}{(k+1)^S(k+2)^S\cdots (k+n+1)^S} \,a_k \,z^{k+n+1} .
\end{equation}

The following lemma is essentially \cite[Lemma 4]{fns}. We copy the proof because it provides an explicit construction of the polynomials $P_{u,s,n}(X) $ and $\widetilde{P}_{u,n}(X) $ (see Eqns. \eqref{eq:101} and \eqref{eq:102}) that will be used several times later.

\begin{lem} \label{lem600} Let $n\ge \lun$. There exist some polynomials $P_{u,s,n}(X) $ and $\widetilde{P}_{u,n}(X) $ in $ \K[X]$ of respective 
degrees $\le n $ and $\le n+1 +S(\ell-1)$ such that, 
for any $f\in\ker L$ holomorphic at 0 and any $z \in\dpr$:
$$
J_f(z)=\sum_{u=1}^{\lun}\sum_{s=1}^S P_{u,s,n}(z) f_{u}^{[s]}(z)+ \sum_{u=0}^{\mu-1} \widetilde{P}_{u,n}(z) (\theta^uf)(z) . 
$$
Moreover:
\begin{itemize}
\item The polynomials $ P_{u,s,n}(z) $ and $\widetilde{P}_{u,n}(z) $ depend only on $L$ but not on $f$.
\item For $u \leq m-1 $ and any $s$, we have $P_{u,s,n}(z) = c_{u,s,n} z^{n+1-u}$ with $ c_{u,s,n} \in\Q$.
\end{itemize}
\end{lem}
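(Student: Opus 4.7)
The plan is to derive the identity from the series \eqref{eqJholo} by means of a partial fraction decomposition followed by an application of Proposition~\ref{prop:1}. Since $r\le S$, the numerator of the rational function
\[R(k) = \frac{k(k-1)\cdots(k-rn+1)}{(k+1)^S(k+2)^S\cdots(k+n+1)^S}\]
has degree $rn<S(n+1)$, so $R$ admits a partial fraction expansion
\[R(k) = \sum_{j=1}^{n+1}\sum_{s=1}^{S}\frac{c_{j,s,n}}{(k+j)^s}\qquad\text{with } c_{j,s,n}\in\Q.\]
Substituting this into \eqref{eqJholo}, exchanging the order of summation (legitimate in the disk of convergence of $f$ at $0$), and recognizing $\sum_{k\ge 0}\frac{a_k}{(k+j)^s}z^{k+j}$ as $f_j^{[s]}(z)$, I would obtain
\[J_f(z) = n!^{S-r}\sum_{j=1}^{n+1}\sum_{s=1}^{S}c_{j,s,n}\,z^{n+1-j}\,f_j^{[s]}(z).\]

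For $1\le j\le m-1$ the corresponding summands already have the desired shape: they contribute monomials $P_{j,s,n}(z)=n!^{S-r}c_{j,s,n}z^{n+1-j}$, which lie in $\Q[z]\subset\K[z]$ and have degree $n+1-j$, between $n+2-m$ and $n$; this yields directly the second bullet of the lemma. For $m\le j\le n+1$ I would apply Proposition~\ref{prop:1}$(i)$ to rewrite each $f_j^{[s]}(z)$ as
\[f_j^{[s]}(z)=\sum_{t=1}^{s}\sum_{u=m}^{\lun}\ppP_{u,t,s,j}\,f_u^{[t]}(z)+\sum_{u=0}^{\mu-1}\qqQ_{u,s,j}(z)\,(\theta^u f)(z),\]
then substitute into the identity above and collect terms. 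The resulting coefficient of $f_u^{[t]}(z)$ for $m\le u\le\lun$ is a $\K$-linear combination of the monomials $z^{n+1-j}$ with $m\le j\le n+1$, hence a polynomial in $\K[z]$ of degree at most $n+1-m\le n$; the coefficient $\widetilde P_{u,n}(z)$ of $(\theta^u f)(z)$ is a sum of products $z^{n+1-j}\qqQ_{u,s,j}(z)$, and since $\deg \qqQ_{u,s,j}\le j+s(\ell-1)$ by Proposition~\ref{prop:1}$(i)$, its degree is at most $(n+1-j)+(j+s(\ell-1))=n+1+s(\ell-1)\le n+1+S(\ell-1)$. All coefficients are independent of $f$ since both the $c_{j,s,n}$ and the output of Proposition~\ref{prop:1} are.

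To conclude, both sides of the desired identity are holomorphic on $\dpr$, and the above derivation shows they agree on a neighbourhood of $0$ inside $\dpr$ (more precisely, inside the disk of convergence of $f$); analytic continuation then extends the identity to all of $\dpr$. The principal work is concentrated in Proposition~\ref{prop:1}, already proved in \cite{fns}; the only genuine step here is the partial fraction decomposition together with the degree bookkeeping. The mild delicate point is the monomial structure of $P_{u,s,n}$ for $u\le m-1$, which relies on the fact that Proposition~\ref{prop:1} reduces each $f_j^{[s]}$ with $j\ge m$ to generators indexed only by $u\in\{m,\dots,\lun\}$, so nothing further feeds into the low-index polynomials beyond the monomial contributions directly produced by the partial fraction expansion.
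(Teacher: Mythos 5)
Your proposal is correct and follows essentially the same route as the paper: partial fraction expansion of the rational function in $k$, identification of the resulting inner series with $f_j^{[s]}(z)$ on the disk of convergence, reduction of the high-index $f_j^{[s]}$ via Proposition~\ref{prop:1}, degree bookkeeping, and analytic continuation to $\dpr$. The only (harmless) deviations are that you apply Proposition~\ref{prop:1} to all indices $j\geq m$ rather than only to $j\geq\lun+1$ as the paper does, and that you keep the factor $n!^{S-r}$ outside the coefficients $c_{j,s,n}$, neither of which affects the degree bounds, the rationality of the monomial coefficients for $u\leq m-1$, or the independence from $f$.
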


\bigskip

This lemma implies directly Eqns. \eqref{eqpadehol} and \eqref{eqpetitdeg} of Theorem \ref{thpade}.

\bigskip

\begin{Remark} With the notation of \cite{fns} we have
$$P_{u,s,n}(z) = z^{n+1} C_{u,s,n}(1/z)\hspace{1.5cm} \mbox{ and } \hspace{1.5cm} \Pti_{u,n}(z) = z^{n+1+S(\ell-1)} \widetilde C_{u,n}(1/z) . $$
\end{Remark}

\bigskip

\begin{proof}
We have the partial fractions expansion in $k$: 
\begin{equation}\label{eq:100}
n!^{S-r}\frac{k(k-1)\cdots (k-rn+1)}{(k+1)^S(k+2)^S\cdots (k+n+1)^S} = \sum_{j=1}^{n+1} \sum_{s=1}^S \frac{c_{j,s,n}}{(k+j)^s}
\end{equation}
for some $c_{j,s,n}\in \mathbb Q$, which also depend on $r$ and $S$. By analytic continuation it is enough to prove Lemma \ref{lem600} when $|z| < R$, where $R$ is the radius of convergence of the local expansion $ \sum_{k=0}^\infty a_k z^k$ of $f(z)$ around 0. Recall that $
f_j^{[s]}(z) = \sum_{k\in \Q , \, k\geq \capaf} \frac{a_k}{(k+j)^s}z^{k+j}
$, so that 
$$
J_f(z)=\sum_{j=1}^{n+1} \sum_{s=1}^S c_{j,s,n} z^{n+1-j}f_{j}^{[s]}(z).
$$
Since $n\ge \lun\ge m$, by Proposition~\ref{prop:1} we have
\begin{align*}
J_f(z)&=\sum_{j=1}^{\lun}\sum_{s=1}^S c_{j,s,n} z^{n+1-j}f_{j}^{[s]}(z) + \sum_{j=\lun+1}^{n+1} \sum_{s=1}^S c_{j,s,n} z^{n+1-j}f_{j}^{[s]}(z)
\\
&=\sum_{j=1}^{\lun}\sum_{s=1}^S c_{j,s,n} z^{n+1-j}f_{j}^{[s]}(z) \\
&\qquad + \sum_{j=\lun+1}^{n+1} \sum_{s=1}^S c_{j,s,n}z^{n+1-j} \left(\sum_{t=1}^{s}\sum_{u=\nun}^{\lun} \ppP_{u,t,s,j}f_u^{[t]}(z) 
+ \sum_{u=0}^{\mu-1} \qqQ_{u,s,j}(z)(\theta^u f)(z) \right)
\\
&=\sum_{u=1}^{\lun}\sum_{s=1}^S P_{u,s,n}(z) f_{u}^{[s]}(z)+ \sum_{u=0}^{\mu-1} \widetilde{P}_{u,n}(z) (\theta^uf)(z) 
\end{align*}
where 
\begin{eqnarray}
P_{u,s,n}(z) &:=& c_{u,s,n}z^{n+1-u}+\sum_{j=\lun+1}^{n+1}\sum_{\sigma=s}^S z^{n+1-j} c_{j,\sigma,n}\ppP_{u,s,\sigma,j}, \label{eq:101} \\
\widetilde{P}_{u,n}(z) &:=&\sum_{j=\lun+1}^{n+1} \sum_{s=1}^S c_{j,s,n} z^{n+1-j}\qqQ_{u,s,j}(z). \label{eq:102}
\end{eqnarray}
In Eq. \eqref{eq:101} we agree that $\ppP_{u,s,\sigma,j} = 0$ if $1\leq u \leq \nun-1$, so that $P_{u,s,n}(z) = c_{u,s,n} z ^{n+1-u}$.

The assertion on the degree of these polynomials is clear from their expressions since $\qqQ_{u,s,j}(z)$ is a polynomial of degree at most $j+s(\ell-1)$.
\end{proof}

\subsection{Other local solutions at 0} \label{subsecnonholoother}

In this section we prove condition \eqref{eqpadenonhol} in Theorem \ref{thpade} $(i)$ involving (non-holomorphic) local solutions at 0, as a direct consequence of Lemma \ref{lemvraiesol} below. 

\bigskip

Since $L$ is Fuchsian with rational exponents, any solution of $Lf=0$ can be written around $z=0$ as 
\begin{equation}\label{eqNil2}
f(z) = \sum_{k\in \Q \atop k\geq \capaf} \sum_{i=0}^{e} a_{k,i} z^k \log (z)^i.
\end{equation}
Moreover, by definition of $\capazero$ (see \S \ref{subseclienfns}), we may choose $\capaf = \capazero$.
Using the polynomials $P_{u,s,n}(X) $ and $\widetilde{P}_{u,n}(X) $ provided by Lemma \ref{lem600} we may consider (as in \S \ref{subsecnotationspade})
\begin{equation}\label{eq:Jf3010}
J_f (z)=\sum_{u=1}^{\lun}\sum_{s=1}^S P_{u,s,n}(z) f_{u}^{[s]}(z)+ \sum_{u=0}^{\mu-1} \widetilde{P}_{u,n}(z) (\theta^uf)(z).
\end{equation}

\begin{lem} \label{lemvraiesol} If $n\ge \lun$ then for any $f\in\ker L$ we have, as $z\to 0$:
$$
J_f(z)=O(z^{ n+1+\capazero} \log (z)^{e+S}). 
$$
\end{lem}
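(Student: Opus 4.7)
The strategy is to first show that the identity used in the proof of Lemma~\ref{lem600} actually holds for \emph{every} $f\in\ker L$, not only for $f$ holomorphic at $0$; then use the explicit expansion of Lemma~\ref{lemexplicitefns} to read off the vanishing order at $0$.

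The first step is to establish that, for every $f\in \ker L$ and every $z\in\dpr$,
$$ J_f(z) \;=\; \sum_{j=1}^{n+1}\sum_{s=1}^S c_{j,s,n}\, z^{n+1-j}\, f_j^{[s]}(z), \qquad (\ast) $$
where the $c_{j,s,n}\in\Q$ are the partial-fraction coefficients of \eqref{eq:100}. The point is that the polynomials $P_{u,s,n}$ and $\Pti_{u,n}$ were defined in \eqref{eq:101}--\eqref{eq:102} purely in terms of the universal data $c_{j,s,n}$, $\ppP_{u,s,\sigma,j}$, $\qqQ_{u,s,j}$, none of which depends on $f$. Substituting \eqref{eq:101}--\eqref{eq:102} into \eqref{eq:Jf3010} and interchanging the summations, the contributions indexed by $j\in\{\lun+1,\dots,n+1\}$ and $\sigma\in\{1,\dots,S\}$ assemble into
$$ z^{n+1-j}\,c_{j,\sigma,n}\Bigl(\sum_{s=1}^{\sigma}\sum_{u=\nun}^{\lun}\ppP_{u,s,\sigma,j}\, f_u^{[s]}(z)+\sum_{u=0}^{\mu-1} \qqQ_{u,\sigma,j}(z)\,(\theta^u f)(z)\Bigr), $$
and by Proposition~\ref{prop:1}(i), which is valid for arbitrary $f\in\ker L$, this bracket is exactly $f_j^{[\sigma]}(z)$. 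The remaining terms, arising from the $c_{u,s,n}z^{n+1-u}$ pieces of $P_{u,s,n}$, reproduce the $j\le\lun$ portion of $(\ast)$, whence $(\ast)$.

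Next, since $L$ is Fuchsian with rational exponents, every $f\in\ker L$ admits the local expansion \eqref{eqNil2} with $\capaf=\capazero$. I substitute the explicit formula \eqref{eq:fjs3010} for each $f_j^{[s]}(z)$ into $(\ast)$ and multiply by $z^{n+1-j}$. Two kinds of contributions appear. The ``purely logarithmic'' piece is proportional to $a_{-j,i}\,z^{n+1-j}\log(z)^{s+i}$ and is non-zero only when the integer $-j$ satisfies $-j\ge \capazero$, which forces $n+1-j\ge n+1+\capazero$ and $s+i\le S+e$. The ``power-series'' piece has generic term proportional to $z^{n+1+k}\log(z)^\lambda$ with $k\ge\capazero$, $k\ne -j$ and $0\le \lambda\le e$, so its $z$-exponent is at least $n+1+\capazero$ and its log-power at most $e$. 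Summing the finitely many contributions at the minimal exponent yields $J_f(z)=O(z^{n+1+\capazero}\log(z)^{e+S})$ as $z\to 0$.

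The only real subtlety is the identity $(\ast)$: once one observes that the construction of $P_{u,s,n}$ and $\Pti_{u,n}$ in \S\ref{subsecholenzero} invoked Proposition~\ref{prop:1}(i) in a manner completely independent of the specific solution $f$, $(\ast)$ extends automatically from the holomorphic setting of Lemma~\ref{lem600} to arbitrary $f\in\ker L$; the subsequent counting of the dominant $z$- and $\log(z)$-powers via Lemma~\ref{lemexplicitefns} is routine bookkeeping.
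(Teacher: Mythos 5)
Your proposal is correct and follows essentially the same route as the paper: the paper likewise shows (via Proposition~\ref{prop:1}$(i)$, valid for all $f\in\ker L$, and the explicit definitions \eqref{eq:101}--\eqref{eq:102}) that $J_f(z)=\sum_{j=1}^{n+1}\sum_{s=1}^S c_{j,s,n}z^{n+1-j}f_j^{[s]}(z)$ for every $f\in\ker L$, and then reads off the order at $0$. The only cosmetic difference is that the paper concludes with the soft estimate $f_j^{[s]}(z)=O(z^{\capazero+j}\log(z)^{e+s})$ coming directly from $f(z)=O(z^{\capazero}\log(z)^{e})$, whereas you invoke the explicit expansion of Lemma~\ref{lemexplicitefns}; both yield the stated bound.
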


In particular we deduce that $J_f(z)=O(z^{ n+\capazero})$ as $z\to0$; therefore Lemma \ref{lemvraiesol} implies condition \eqref{eqpadenonhol} in Theorem \ref{thpade} $(i)$.

\begin{proof} Let us consider 
$$
\widetilde {J_f} (z):=\sum_{j=1}^{n+1} \sum_{s=1}^S c_{j,s,n} z^{n+1-j}f_{j}^{[s]}( z).
$$
Since $n\geq \lun$, we have 
\begin{equation*}
\widetilde {J_f} (z) =\sum_{j=1}^{\lun}\sum_{s=1}^S c_{j,s,n} z^{n+1-j}f_{j}^{[s]}( z) + \sum_{j=\lun+1}^{n+1} \sum_{s=1}^S c_{j,s,n} z^{n+1-j}f_{j}^{[s]}(z).
\end{equation*}
The same computation as in the proof of Lemma \ref{lem600}, based on Proposition~\ref{prop:1}, yields 
\begin{align*}
\widetilde {J_f} (z) &=\sum_{j=1}^{\lun}\sum_{s=1}^S c_{j,s,n} z^{n+1-j}f_{j}^{[s]}( z) \\
& + \sum_{j=\lun+1}^{n+1} \sum_{s=1}^S c_{j,s,n}z^{n+1-j} \left(\sum_{t=1}^{s}\sum_{u=\nun}^{\lun} \ppP_{u,t,s,j}f_u^{[t]}(z) 
 + \sum_{u=0}^{\mu-1} \qqQ_{u,s,j}(z)(\theta^u f)(z) \right)
\\
&=\sum_{u=1}^{\lun}\sum_{s=1}^S P_{u,s,n}(z) f_{u}^{[s]}(z)+ \sum_{u=0}^{\mu-1} \widetilde{P}_{u,n}(z) (\theta^u f)(z) 
\end{align*}
using Eqns. \eqref{eq:101} and \eqref{eq:102}, so that $\widetilde {J_f} (z) = J_f(z)$. Now we have $f(z) = O(z^{\capazero} \log (z)^e)$ as $z\to0$ since Eq. \eqref{eqNil2} holds with $\capaf = \capazero$, so that $f_j^{[s]}(z) = O(z^{\capazero +j} \log (z)^{e+s})$ for any $j$ and any $s$. Therefore 
 $ \widetilde {J_f} (z) =O(z^{ \capazero+ n+1}\log (z)^{e+S} )$, and the same property holds for $ J_f(z)$. 
\end{proof}

\bigskip

\begin{Remark} \label{remsanslog}
Assume that the local expansion of $f\in\ker L$ around the origin is given by $f(z) = \sum_{k\in\Q , k \geq \capaf }^\infty a_{k } z^k$ with $a_k=0$ for any $k\in\Z$ such that $k < 0$. Then as in the holomorphic case (see \S \ref{subsecholenzero}) we obtain from \eqref{eq:Jf3010}, by combining the proofs of Lemmas \ref{lem600} and \ref{lemvraiesol}:
\begin{equation*}
J_{f}(z) = n!^{S-r} \sum_{k\in\Q \atop k \geq \capaf }^\infty \frac{k(k-1)\cdots (k-rn+1)}{(k+1)^S(k+2)^S\cdots (k+n+1)^S} a_k z^{ k+n+1} \end{equation*}
where we omit the terms corresponding to $k\in\Z$, $k\leq 0$. We refer to Lemma \ref{lemexplicite} below for the general expression of $J_f(z)$ in terms of the local expansion of $f$ around 0.
\end{Remark}

\subsection{Vanishing conditions at singularities} \label{subsecannulationsingularites}

In this section we conclude the proof of Theorem \ref{thpade} by proving assertion $(ii)$. We fix $\alpha\in
\Sigma \setminus\{0\}$ and assume that $r<S$ (otherwise it holds trivially). Given $f\in\ker L$, Theorem 1 of \cite{fuchsien} provides a function $h$, holomorphic at 0 and at all singularities $\beta\in \Sigma\setminus\{\alpha\}$, such that $h-f$ is holomorphic at $\alpha$ and $Lh\in\C[z]$.
Using \eqref{eqopdiff} to add a suitable polynomial (if necessary), we may assume that $Lh=0$. 
 If $h$ is holomorphic at $\alpha$ then so is $f$, and the left hand side of Eq. \eqref{eqpadealpha} is identically zero; from now on we assume that $h$ is not holomorphic at $\alpha$. Since $h$ is holomorphic at 0, Lemma \ref{lem600} yields 
\begin{equation}\label{tfatourner}
J_h(z) = \sum_{u=1}^{\lun}\sum_{s=1}^S P_{u,s,n}(z) h_{u}^{[s]}(z)+ \sum_{u=0}^{\mu-1} \Pti_{u,n}(z) (\theta^u h)(z) 
\end{equation}
for any $z\in\dpr$, with
\begin{equation}\label{tfaderiver}
J_h(z) = n!^{S-r}\sum_{k=0}^\infty \frac{(k-rn+1)_{rn}}{(k+1)_{n+1}^S}\, h_k \,z^{k+n+1}
\end{equation}
for $|z| < |\alpha|$, where $h(z)=\sum_{k=0}^\infty h_k z^k$; recall that 
 $\alpha$ is the only finite singularity of $h$. Transference theorems (see for instance \cite[\S VI.5]{Bible} or \cite[\S 6.2]{gvalues}) provide $t\in\Q$, $e \in \N$, $J \geq 1$, $d_1,\ldots,d_J \in \C\etoile$ and pairwise distinct $\zeta_1,\ldots,\zeta_J \in \C $ with $| \zeta_1 | = \ldots = | \zeta_J | = 1$ such that 
\begin{equation}\label{eqaksing}
h_k = |\alpha| ^{-k} k^t (\log k)^{e} \Big( \chi_k + o(1)\Big) \mbox{ as $k \to\infty$, with } \chi_k = \sum_{j=1}^J d_j \zeta_j^k.
\end{equation}
 
Assume that $n$ is sufficiently large. Since $r<S$, Eq. \eqref{eqaksing} shows that the series \eqref{tfaderiver} converges absolutely for any $z$ such that $|z| \leq |\alpha| $, so that $J_h(z) $ has no pole of modulus $ |\alpha| $. However Eq. \eqref{eqaksing} proves that the $k$-th Taylor coefficient of $J_h(z) $ at the origin grows essentially like $ |\alpha| ^{-k}$ (see \cite[Lemma 6]{gvalues}). Therefore $J_h(z) $ has a (non-polar) singularity of modulus $ |\alpha|$; Eq. \eqref{tfatourner} shows that it is also a singularity of $h$, so that this non-polar singularity of $J_h(z) $ is $\alpha $.

To conclude the proof, we notice that the integer $\kappa$ in Theorem \ref{thpade} may be increased to ensure that $\kappa > t -S +1$ (since the integer $t$ in Eq. \eqref{eqaksing} can be bounded in terms of $L$ only). Let $p$ be a non-negative integer less than or equal to $(S-r)n - \kappa$, so that $p+ t + rn - S(n+1) < -1$. Then Eq. \eqref{eqaksing} shows that the $p$-th derivative of $J_h(z) $ at $z = \alpha$ is defined by an absolutely convergent series (obtained by differentiating Eq. \eqref{tfaderiver}), so that it has a finite limit as $z\to \alpha$. Applying Lemma \ref{lempoids} yields 
$$\omal (J_f)(z) = o (( z-\alpha)^{ (S-r)n - \kappa }) \mbox{ as } z\to\alpha.$$
Using Eq. \eqref{tfatourner} we obtain
$$ \sum_{u=1}^{\lun}\sum_{s=1}^S P_{u,s,n}(z) \omal ( h_{u}^{[s]})(z)+ \sum_{u=0}^{\mu-1} \Pti_{u,n}(z) \omal (\theta^u h)(z) = o (( z-\alpha)^{ (S-r)n - \kappa }) \mbox{ as } z\to\alpha.$$
Now $h-f$ is holomorphic at $\alpha$: so are $h_{u}^{[s]} - f_{u}^{[s]}$ and $\theta^u h - \theta^u f$, and therefore $h$ may be replaced with $f$ in this equation.
Replacing also trivially the symbol $o$ with $O$, this yields Eq. \eqref{eqpadealpha} for $f$. 
Moreover the function in the left hand side is $\omal (J_h)(z) $: it is not identically zero since $ J_h(z) $ has a non-polar singularity at $\alpha$. This concludes the proof of Theorem \ref{thpade}.

\section{A general version of Shidlovsky's lemma} \label{secshid}

In this section we state and prove a general version of Shidlovsky's lemma. The point is to adapt the one of \cite{SFcaract} (based upon the approach of Bertrand-Beukers \cite{BB} and Bertrand \cite{DBShid}) so as to take into account polynomial solutions with zero Pad\'e remainders. We consider a general setting (see \S \ref{subsecsetting}) because we hope this result can be useful in other situations.

\subsection{Setting} \label{subsecsetting}

Given $\sigma\in \C\cup\{\infty\}$, recall that the Nilsson class at $\sigma$ is the set of finite sums
$$
f(z) = \sum_{e\in\C} \sum_{i\in\N} \lambda_{i,e} \, h_{i,e}(z) (z-\sigma)^e \log(z-\sigma)^i
$$
where $\lambda_{i,e}\in\C$, $h_{i,e}$ is holomorphic at $\sigma$, and $z-\sigma$ should be understood as $1/z$ if $\sigma=\infty$. If such a function $f(z)$ is not identically zero, we may assume that $h_{i,e}(\sigma)\neq 0$ for any $i$, $e$; then the generalized order of $f$ at $\sigma$, denoted by $\ord_\sigma f$, is the minimal real part of an exponent $e$ such that $\lambda_{i,e}\neq 0$ for some $i$.

\bigskip

Let $q$ be a positive integer, and $A \in M_q(\C(z))$. We fix $P_1,\ldots,P_q\in\C[z]$ and $n \in\N=\{0,1,2,\ldots\}$ such that 
 $\deg P_i \leq n$ for any $i$. Then with any solution $Y = \tra (y_1,\ldots,y_q)$ of the differential system $Y'=AY$ is associated a remainder $R(Y)$ defined by 
$$
R(Y)(z) = \sum_{i=1}^q P_i(z) y_i(z).$$

\medskip

Let $\Sigma$ be a finite subset of $ \C\cup\{\infty\}$. This will be the set of points where vanishing conditions appear. We do not assume any relationship\footnote{To help the reader compare with \cite{SFcaract}, the notation of this section is independent from the one in the previous sections. In particular $\Sigma$, $\alpha$, $n$ and $J_\sigma$ have different meanings (see \S \ref{subsecshidfin}).} between $\Sigma$ and the set of singularities of the differential system $Y' =AY$ (even though interesting Pad\'e approximation problems often involve vanishing conditions at singularities, as in Theorem \ref{thpade}). 
For each $\sigma\in\Sigma$, let 
$(Y_j)_{j\in J_\sigma}$ be a family of solutions of $Y'=AY$ such that the functions $R(Y_j)$, $j\in J_\sigma$, are $\C$-linearly independent and 
belong to the Nilsson class at $\sigma$.

\medskip

We agree that $J_\sigma=\emptyset$ if $\sigma\not\in \Sigma$, and we also consider a finite set $J$ and a family $(Y_j)_{j\in J}$ of 
 solutions of $Y'=AY$ such that $R(Y_j)=0$ for any $j\in J$; we assume these solutions $Y_j$ to be $\C$-linearly independent, and 
to belong to the Nilsson class at $\sigma$. We let $\roro = \Card J$; the case $\roro = 0$ is treated in \cite{SFcaract}. Such solutions $Y_j$ with zero Pad\'e remainders appear in Theorem~\ref{thpade} if $r\geq 1$ and the differential operator $L$ has non-zero polynomial solutions (see the proof of Proposition \ref{propshid} in \S \ref{secapplishid} below). They are the only reason why \cite[Theorem 2]{SFcaract} does not apply to our setting.

\bigskip

At last we 
 let $M(z) = (P_{k,i}(z))_{1\leq i,k \leq q} \in M_q(\C(z))$ where the rational functions $P_{k,i}\in\C(z)$ are defined for $k \geq 1$ and $1 \leq i \leq q$ by
\begin{equation} \label{eqdefpki}
\left( \begin{array}{c} P_{k,1} \\ \vdots \\ P_{k,q} \end{array}\right) = \left(\frac{\dd}{\dd z} + \, \tra A\right)^{k-1} 
 \left( \begin{array}{c} P_{1} \\ \vdots \\ P_{q} \end{array}\right).
 \end{equation}
  Obviously the poles of the coefficients $P_{k,i}$ of $M$ are among those of $A$. These rational functions $P_{k,i}$ play an important role because they are used to differentiate the remainders \cite[Chapter 3, \S 4]{Shidlovski}:
\begin{equation} \label{eqderiR}
R(Y)^{(k-1)}(z) = \sum_{i=1}^q P_{k,i}(z) y_i(z). 
\end{equation}

\subsection{Statement of Shidlovsky's lemma}

In the setting of \S \ref{subsecsetting}, let $\tau\in\Z$ be such that 
\begin{equation} \label{eqhypdetnn}
\sum_{\sigma\in\Sigma}\sum_{j\in J_{\sigma}} \ord_\sigma(R(Y_j)) \geq (n+1) ( q - \Card J_\infty - \roro) -\tau .
\end{equation}
The first result we shall prove below is the existence of a positive constant $\cstun$, which depends only on $A$ and $\Sigma$,
 such that:
 \begin{itemize}
 \item We have $\tau \geq -\cstun$.
 \item If $ \tau \leq n - \cstun$ then $\rk (M(z)) = q-\roro$.
\end{itemize}
Here and below, we denote by $\rk$ the rank of a matrix.
This result generalizes the functional part of Shidlovsky's lemma, namely $\det M(z)$ is not identically zero (if $\roro=0$). Its proof relies on the functional Shidlovsky's lemma of \cite{SFcaract}, which is itself based upon the approach of Bertrand-Beukers \cite{BB} and Bertrand \cite{DBShid}.

\bigskip

The next step is to evaluate at a given point $\alpha$, going from functional to numerical linear forms (see \cite[Chapter 3, Lemma 10]{Shidlovski} for the classical setting). As in \cite{SFcaract} we allow $\alpha$ to be a singularity of the differential system $Y'=AY$, and/or an element of $\Sigma$ (eventhough in the proof of Theorem \ref{thintroun}, $\alpha\not\in\Sigma$ is not a singularity).

\begin{Th} \label{thshid}
There exists a positive constant $\cstnum$, which depends only on $A$ and $\Sigma$, with the following property. Assume that, for some $\alpha\in\C$:
 \begin{itemize}
\item[$(i)$] If $\alpha$ is a singularity of the differential system $Y'=AY$, it is a regular one and all non-zero exponents at $\alpha$ have positive real parts.
 \item[$(ii)$] Eq. \eqref{eqhypdetnn} holds for some $\tau $ with $0 \leq \tau \leq n - \cstun$.
\item[$(iii)$] All rational functions $P_{k,i}$, with $1\leq i \leq q$ and $1 \leq k < \tau + \cstnum$, are holomorphic at $z=\alpha$.
\end{itemize}
Then the matrix $(P_{k,i}(\alpha))_{1\leq i \leq q, 1\leq k < \tau + \cstnum} \in M_{q,\tau + \cstnum-1}(\C)$ has rank at least $q-\roro-\Card J_\alpha$.
\end{Th}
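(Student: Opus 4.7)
The plan is to deduce Theorem~\ref{thshid} from the case $\roro=0$ treated in \cite[Theorem~2]{SFcaract}, by a gauge transformation that absorbs the $\roro$ zero-remainder solutions into a trivial block. Since the authors emphasize that handling these solutions is the only new feature of the present version, such a reduction should be both natural and essentially sufficient.

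Concretely, fix a fundamental matrix $\mathcal{Y}(z)=[\,Y^{(1)}\mid\cdots\mid Y^{(q)}\,]$ of $Y'=AY$ whose first $\roro$ columns are the solutions $Y_j$ ($j\in J$). From \eqref{eqderiR}, the $(k,i)$-entry of $M(z)\mathcal{Y}(z)$ equals $R(Y^{(i)})^{(k-1)}(z)$, so the first $\roro$ columns vanish identically and the remaining ones form a Wronskian-type matrix $N(z)$ built from the non-zero remainders $R(Y^{(\roro+1)}),\ldots,R(Y^{(q)})$. The matrix $N(z)$ is precisely the one associated with the Pad\'e approximation problem set on the $(q-\roro)$-dimensional quotient differential module $V/W$, where $V$ is the solution space of $Y'=AY$ and $W=\Span_\C(Y_j:j\in J)$. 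The vanishing hypothesis \eqref{eqhypdetnn} transfers unchanged to this reduced problem: the remainders $R(Y_j)$ for $j\in J_\sigma$ are preserved by the gauge change, and the right-hand side $(n+1)(q-\Card J_\infty-\roro)-\tau$ equals $(n+1)\bigl((q-\roro)-\Card J_\infty\bigr)-\tau$. Applying \cite[Theorem~2]{SFcaract} to this reduced problem (whose own ``$\roro$'' is $0$) under assumptions $(i)$--$(iii)$ then gives a rank lower bound of $(q-\roro)-\Card J_\alpha$ on the submatrix of $N(\alpha)$ consisting of its first $\tau+\cstnum-1$ rows. The constants $\cstun$ and $\cstnum$ can be taken from \cite{SFcaract} and possibly enlarged by amounts depending only on $A$ and $\Sigma$ to absorb the pole orders introduced by the gauge change.

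It remains to transfer this bound to the matrix $(P_{k,i}(\alpha))_{1\le i\le q,\,1\le k<\tau+\cstnum}$ appearing in Theorem~\ref{thshid}. At an ordinary point $\alpha$ this is immediate because $\mathcal{Y}(\alpha)$ is invertible, so the ranks of the $(\tau+\cstnum-1)$-row truncations of $M(\alpha)$ and of $M(\alpha)\mathcal{Y}(\alpha)=\bigl(0\mid N(\alpha)\bigr)$ coincide. At a regular singularity $\alpha$, one replaces $\mathcal{Y}$ by its Frobenius-type local factorization near $\alpha$; hypothesis $(i)$ (no non-zero exponent at $\alpha$ has negative real part) ensures that the leading terms of the $q-\roro$ columns of the local fundamental matrix not lying in $W$ remain linearly independent after evaluation of their holomorphic factors at $\alpha$, so the rank transfer loses nothing. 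This transfer step, carried out essentially as in \cite{SFcaract}, is where the greatest technical care is required; the only genuinely new ingredient is to check that the images in $V/W$ of the $Y_j$ for $j\in J_\alpha$ are linearly independent, which follows from the linear independence of $R(Y_j)$, $j\in J_\alpha$, together with $R(Y_j)=0$ for $j\in J$. I expect this bookkeeping, together with a careful audit that no step of \cite{SFcaract} implicitly assumes $\roro=0$, to be the main obstacle.
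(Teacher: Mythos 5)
Your reduction breaks down at its very first step, before any of the bookkeeping you defer. The ``quotient differential module $V/W$'' over $\C(z)$ need not exist: in the setting of Theorem \ref{thshid} the $Y_j$, $j\in J$, are arbitrary $\C$-linearly independent solutions with $R(Y_j)=0$, and their span $W$ is not assumed (and is not known a priori) to be stable under the differential Galois group, i.e.\ to come from a subbundle defined over $\C(z)$. Without that, there is no reduced system of size $q-\roro$ with coefficients in $\C(z)$ and no polynomials of degree $\leq n+O(1)$ to which \cite[Theorem 2]{SFcaract} could be applied; the covector $(P_1,\ldots,P_q)$ only annihilates the flat (generally irrational) subbundle spanned by the $Y_j$. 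What is true is that the \emph{full} space $V_0$ of zero-remainder solutions is Galois-stable, and that under hypothesis $(ii)$ one has $\rk M(z)=q-\roro$, which forces $W=V_0$; but this rank computation is itself the substance of Steps 1--2 of the paper's proof (via the functional multiplicity estimate \cite[Theorem 3.1]{SFcaract} applied to an operator annihilating all remainders, plus a delicate bound on $\ord_\sigma\det M_0$ using the wronskian of a basis containing the $Y_j$, $j\in J$), and it appears nowhere in your argument -- so your reduction is circular as written. A further uniformity problem: the gauge/projection you invoke depends on $W$, hence in principle on the $P_i$ and on $n$, so ``enlarging the constants by amounts depending only on $A$ and $\Sigma$'' is not justified; the theorem requires $\cstnum$ to depend on $A$ and $\Sigma$ alone.

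Even granting a rational quotient, two more points would need real work. First, your identification is off: the matrix $N(z)$ of derivatives of the non-zero remainders is not ``the matrix associated with the reduced Pad\'e problem''; it is the product of that coefficient matrix with a fundamental solution matrix of the reduced system, so the conclusion of \cite[Theorem 2]{SFcaract} (which concerns the evaluated coefficient matrix) does not directly say anything about $N(\alpha)$, nor conversely. Second, the transfer of the rank bound at a regular singular point $\alpha$, which you assert ``loses nothing'' thanks to $(i)$, is exactly the hard core of the statement: the paper's Step 3 is devoted to it, bounding $\ord_\alpha\det M_0$ by $\tau+O(1)$, performing a Laplace expansion by complementary minors of the matrix obtained after multiplying by a local solution matrix containing the $Y_j$ for $j\in J\sqcup J_\alpha$, showing that a suitable derivative of a well-chosen minor has a finite non-zero limit (this is where hypothesis $(i)$ enters, making the regularized evaluation at $\alpha$ an algebra homomorphism), and concluding by Cauchy--Binet. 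None of this is replaced by your Frobenius remark. In short, the paper does not deduce Theorem \ref{thshid} from \cite[Theorem 2]{SFcaract} by passing to a quotient; it reruns the whole argument with the $\roro$ identically zero rows carried along, and the ingredients that make that work are missing from your proposal.
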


In particular, assertion $(i)$ holds if the differential system $Y'=AY$ 
 has a basis of local solutions at $\alpha$ with coordinates in \mbox{$\C[\log(z-\alpha)][[(z-\alpha)^e]]$} for some positive rational number $e$.
 
 The lower bound on $\rk \big(P_{k,i}(\alpha))\big)$ provided by Theorem \ref{thshid} is an equality in many special cases, including the Pad\'e approximation problem of Theorem \ref{thpade} (see Proposition \ref{propshid} below) and the one studied in \cite[\S 4]{SFcaract}; it would be interesting to know if this is always the case.

\subsection{Proof of Shidlovsky's lemma} \label{subsecpreuveshid}

In this section we prove Theorem \ref{thshid}. The proof falls into 3 steps.
 
\bigskip

\noindent {\bf Step 1: } $M(z) \in M_q(\C(z))$ has rank at least $q-\roro$.

\medskip

As in \cite{Shidlovski}, there is a non-trivial linear relation with coefficients in $\C(z)$ between the $\rk(M)+1$ first columns of $M$; this provides a differential operator $L$ of order $\mu = \rk(M)$ such that $L(R(Y))=0$ for any solution $Y$ of the differential system $Y'=AY$, because we have
 $$\tra \, Y M = \left( \begin{array}{cccc} R(Y) & R(Y)' & \ldots & R(Y)^{(q-1)}\end{array}\right).$$
In particular, $L(R(Y_j))= 0 $ for any $\sigma$ and any $j\in J_\sigma$. Therefore \cite[Theorem 3.1]{SFcaract} yields 
$$
\sum_{\sigma\in\Sigma}\sum_{j\in J_{\sigma}} \ord_\sigma(R(Y_j)) \leq (n+1) (\mu - \Card J_\infty) + \cstun
$$
where $\cstun$ is a constant that depends only on $A$ and $\Sigma$. Together with Eq. \eqref{eqhypdetnn} and the assumption $\tau\leq n -\cstun$, this inequality implies $\mu \geq q -\roro$. 

\bigskip

\noindent {\bf Step 2: } Determination of minors up to factors of bounded degree.

\medskip

From now on we denote by $\mu$ the rank of $M(z)$, and we consider a $\mu\times\mu$ submatrix $M_0(z)$ of $M(z)$ such that $\det M_0(z)\neq 0$. Step 1 yields $\mu\geq q-\roro$, and we shall prove that equality holds.

Let $S$ denote the set of finite singularities of the differential system $Y'=AY$, i.e. poles of coefficients of $A$. For any $s\in S$, let $N_s$ denote the maximal order of $s$ as a pole of a coefficient of $A$; let $N_s = 0$ for $s\in\C\setminus S$. Then Eq. \eqref{eqdefpki} shows that $(z-s)^{(k-1)N_s}P_{k,i}(z) $ is holomorphic at $z=s$ for any $k\geq 1$ and any $i\in\unq$. Therefore $\det M_0(z) \cdot \prod_{s\in S} (z-s)^{q(q-1)N_s}$ has no pole: is it a polynomial.

Now let $\sigma\in \Sigma$, and denote by $T_\sigma \in M_{\roro+\Card J_\sigma, q}(\hol)$ the matrix with rows $\tra \, Y_j$, $j \in J\sqcup J_\sigma$; here $J\sqcup J_\sigma$ is the disjoint union of $J$ and $J_\sigma$, in which any element of $ J\cap J_\sigma$ appears twice (but we shall prove shortly that there is no such element).
 Let us prove that these rows are linearly independent over $\C$, so that $\rk T_\sigma = \roro+\Card J_\sigma$. Assume that $\sum_{j \in J\sqcup J_\sigma} \lambda_j Y_j = 0$; then $\sum_{j \in J_\sigma} \lambda_j R(Y_j) = 0$, so that $ \lambda_j = 0$
for any $j \in J_\sigma$ because the remainders $R(Y_j)$, $j \in J_\sigma$, are $\C$-linearly independent. Therefore $\sum_{j \in J } \lambda_j Y_j = 0$, and finally $ \lambda_j = 0$
for any $j \in J $ because the solutions $Y_j$, $j \in J $, are $\C$-linearly independent. This concludes the proof that $\rk T_\sigma = \roro+\Card J_\sigma$; in the same time we have proved that $ J\cap J_\sigma = \emptyset$.

Therefore there exists a basis of solutions $\calB_\sigma$ of the differential system $Y'=AY$ of which the $\roro$ first elements are the $Y_j$, $j\in J$, and the $\Card J_\sigma$ next ones are the $Y_j$, $j\in J_\sigma$. The wronskian determinant of $\calB_\sigma$ may vanish at $\sigma$ if $\sigma$ is a singularity, but even in this case it has generalized order $\leq c_0(\sigma)$ at $\sigma$ (see \cite[\S 3.1]{SFcaract}), where $c_0(\sigma)$ is a constant depending only on $A$ and $\sigma$ (not on $\calB_\sigma$). On the other hand, all components of all elements of $\calB_\sigma$ have generalized order $\geq r_\sigma $ at $\sigma$, where $r_\sigma\in\R$ depends only on $A$ and $\sigma$ (see \cite[Proposition~1]{BB}). 
 Therefore 
 there exists a subset $I_\sigma$ of $\unq$, with $\Card I_\sigma = q- \roro- \Card J_\sigma$, such that the determinant of the submatrix of $T_\sigma$ corresponding to the columns indexed by $\unq \setminus I_\sigma$
has generalized order $\leq c(\sigma)$ at $\sigma$,
 where $c(\sigma) = c_0(\sigma) - r_\sigma \Card I_\sigma$ depends only on $A$ and $\sigma$. Increasing $c_0(\sigma)$ and $c(\sigma)$ if necessary, we may assume that $c(\sigma)\in\N$. 
 
 Let $P_\sigma \in M_q(\hol) $ denote the matrix of which the $\roro+ \Card J_\sigma$ first rows are that of $T_\sigma$, and the other rows are the $\tra e_i$, $i\in I_\sigma$, where $(e_1,\ldots,e_q)$ is the canonical basis of $M_{q,1}(\C)$. Then $P_\sigma M$ has its 
$\roro$ first rows equal to $(0 \,\, 0 \ldots 0)$, its $ \Card J_\sigma$ next ones equal to $ \left( \begin{array}{cccc} R(Y_j) & R(Y_j)' & \ldots & R(Y_j)^{(q-1)}\end{array}\right)$ with $j\in J_\sigma$, and its last rows equal to $ \left( \begin{array}{ccc} P_{1,i} \;\ldots\; P_{q,i}\end{array}\right)$ with $i \in I_\sigma$. Therefore all coefficients in the row corresponding to $j \in J_\sigma$ have order at $\sigma$ at least $\ord_\sigma R(Y_j)-q+1$, and (if $\sigma\neq\infty$) all coefficients in the row corresponding to $i \in I_\sigma$ are either holomorphic at $\sigma$, or have a pole of order at most $(q-1)N_\sigma$ is $\sigma\in S$. 

By construction of $I_\sigma$, the determinant of $P_\sigma$ has generalized order $\leq c(\sigma)$ at $\sigma$; in particular $P_\sigma$ is invertible and we may write $M = P_\sigma^{-1} (P_\sigma M)$. Since the first $\roro$ rows of $P_\sigma M\in M_q(\hol) $ are identically zero, we have $\rk (P_\sigma M) \leq q -\roro$ so that $\rk M \leq q -\roro$. Combining this inequality with Step 1, we obtain $\rk M = q -\roro$.

Recall that $M_0(z)$ is a $\mu\times\mu$ submatrix of $M(z)$ such that $\det M_0(z)\neq 0$, with $\mu = \rk M = q -\roro$. Let $k_1 < \ldots < k_\mu$ denote the indices $k$ such that the $k$-th column of $M(z)$ appears in $M_0(z)$. Denote by $M_1(z) \in M_\mu(\hol) $ the submatrix of $P_\sigma M$ obtained by removing the first $\roro$ rows (which are identically zero) and keeping only the columns with indices $k_1$, \ldots, $k_\mu$. Let $M_2(z) \in M_\mu(\hol) $ denote the submatrix of $P_\sigma ^{-1}$ obtained by removing the first $\roro$ columns, and keeping only the columns with indices $i_1$, \ldots, $i_\mu$ where $i_1<\ldots< i_\mu$ are the indices $i$ such that the $i$-th row of $M(z)$ appears in $M_0(z)$. Then the identity $M = P_\sigma^{-1} (P_\sigma M)$ yields $M_0 = M_2M_1$ because the first $\roro$ rows of $P_\sigma M$ are identically zero. Recall that all coefficients in the row of $M_1$ corresponding to $j \in J_\sigma$ have order at $\sigma$ at least $\ord_\sigma R(Y_j)-q+1$, and (if $\sigma\neq\infty$) all coefficients in the row corresponding to $i \in I_\sigma$ are either holomorphic at $\sigma$, or have a pole of order at most $(q-1)N_\sigma$ is $\sigma\in S$. 
Since $N_\sigma = 0$ if $\sigma\not\in S$, we have if $\sigma\in\Sigma\setminus\{\infty\}$:
$$\ord_\sigma \det( M_1) \geq \Big( \sum_{j\in J_\sigma}\ord_\sigma R(Y_j)\Big) - (q-1)\Card J_\sigma -(q-1)N_\sigma (\mu-\Card J_\sigma).$$

Now let us focus on $M_2$. Recall that all coefficients of the $\roro + \Card J_\sigma$ first rows of $P_\sigma$ have generalized order $\geq r_\sigma$ at $\sigma$, and all coefficients of the other rows are equal to 0 or 1. Therefore all coefficients of the comatrix ${\rm Com} P_\sigma$ have generalized order $\geq (\roro + \Card J_\sigma) r_\sigma$ at $\sigma$ (because we may assume that $r_\sigma\leq 0$). Since $\ord_\sigma \det( P_\sigma)\leq c(\sigma)$ and $M_2$ is a submatrix of $P_\sigma^{-1} = (\det P_\sigma)^{-1} \, \, \tra \, {\rm Com} P_\sigma$, we deduce that all coefficients of $M_2$ have generalized order $\geq (\roro + \Card J_\sigma) r_\sigma-c(\sigma)$ at $\sigma$, so that 
\begin{equation} \label{eqdetmde}
\ord_\sigma \det(M_2) \geq \mu (\roro + \Card J_\sigma) r_\sigma - \mu c(\sigma).
\end{equation}
Combining this inequality with the corresponding one on $M_1$, we obtain
\begin{equation} \label{eqdetmtr}
\ord_\sigma \det( M_0) \geq \Big( \sum_{j\in J_\sigma}\ord_\sigma R(Y_j)\Big) - c'(\sigma) 
\end{equation}
for some integer constant $c'(\sigma) $ which depends only on $A$ and $\Sigma$. Now let
$$Q_2(z) = \Big( \prod_{s\in S} (z-s)^{q(q-1)N_s} \Big) \cdot \Big( \prod_{\sigma\in\Sigma\setminus\{\infty\}}(z-\sigma)^{c'(\sigma)}\Big)$$
so that $Q_2(z) \det M(z)$ is a polynomial and vanishes at any $\sigma \in\Sigma\setminus\{\infty\}$ with order at least $ \sum_{j\in J_\sigma}\ord_\sigma R(Y_j)$. To bound from above the degree of this polynomial, we define $T_\infty$ and $P_\infty$ as above; 
 if $\infty\not\in\Sigma$ then $J_\infty=\emptyset$ and there is no row corresponding to $J_\infty$. Everything works in the same way as for $\sigma\in\C$, except that for some non-negative integer $t$ we have $R(Y_j)^{(k-1)} = O(z^{-\ord_\infty R(Y_j)}\log (z)^t)$ as $|z|\to\infty$ for any $j\in J_\infty$ and any $k\geq 1$, and $P_{k,i}(z) = O(z^{n+(q-1)d})$ for any $i\in I_\infty$ and any $k \in \unq$ (where $d$ is greater than or equal to the degree of all coefficients of $A$). Therefore we have $\ord_\infty R(Y_j)^{(k-1)} \geq \ord_\infty R(Y_j)$ for any $j\in J_\infty$ and any $k\geq 1$, and 
 $\ord_\infty P_{k,i} \geq -n-(q-1)d$ for any $i\in I_\infty$ and any $k \in \unq$. Arguing as above we obtain 
 $$
 \ord_\infty \det( M_1) \geq \Big( \sum_{j\in J_\infty}\ord_\infty R(Y_j)\Big) - (n+(q-1)d)(\mu- \Card J_\infty),$$
and Eq. \eqref{eqdetmde} remains valid with $\sigma=\infty$, so that 
 $$
 \ord_\infty \det( M_0) \geq \Big( \sum_{j\in J_\infty}\ord_\infty R(Y_j)\Big) - n(\mu- \Card J_\infty) - c'(\infty)$$
for some $c'(\infty) $ which depends only on $A$ and $\Sigma$. 
 Since $\det M_0(z)$ is a rational function this means $\det M_0(z) = O(z^u)$ as $|z| \to \infty$, with
$$u = c'(\infty) + n(q-\roro - \Card J_\infty) - \sum_{j\in J_\infty} \ord_{\infty}R(Y_j),$$
so that
$$\deg ( Q_2(z) \det M_0(z)) \leq u + \deg Q_2 \leq \tau + \cstun + \sum_{\sigma\in\Sigma\setminus \{\infty\} } \Big\lceil\, \sum_{j\in J_\sigma} \ord_{\sigma}R(Y_j)\Big\rceil $$
using Eq. \eqref{eqhypdetnn}, where $\cstun$ depends only on $A$ and $\Sigma$ (since $0\leq \Card J_\sigma \leq q$ for any $\sigma$) and $\lceil \omega \rceil$ is the least integer greater than or equal to $\omega$. 
 To sum up,
 we have found a polynomial $Q_1$ of degree at most $\tau + \cstun$ such that
$$\det M_0(z) = \frac{Q_1(z)}{Q_2(z)} \prod _{\sigma\in\Sigma\setminus \{\infty\}} (z-\sigma)^{\lceil\, \sum_{j\in J_\sigma} \ord_{\sigma}R(Y_j) \rceil }.$$
 Since $ \det M_0\neq 0$, this implies in particular $\tau\geq -\cstun$.

\bigskip

\noindent {\bf Step 3: } Evaluation at $\alpha$.

\medskip

To begin with, we denote by $\loga$ the $\C$-vector space spanned by the functions $h(z) (z-\alpha)^e (\log (z-\alpha))^i$ with $h$ holomorphic at $\alpha$, $i\in\N$, and $e\in\C$ such that either $e=0$ or $\Re(e) > 0$.

Let $q_\alpha = \Card J_\alpha$ and $q'_\alpha = q-\roro-q_\alpha$, where $J_\alpha = \emptyset $ if $\alpha\not\in\Sigma$; for simplicity we assume that $ J = \{1,\ldots,\roro\}$ and $J_\alpha = \{\roro+1,\ldots,\roro+q_\alpha\}$. Since the solutions $Y_{1}$, \ldots, $Y_{\roro+q_\alpha}$ of the differential system $Y'=AY$ are linearly independent over $\C$ (see Step 2), there exist solutions $Y_{\roro+q_\alpha+1}$, \ldots, $Y_q$ such that $(Y_1,\ldots, Y_q)$ is a local basis of solutions at $\alpha$. Let $\calY $ be the matrix with columns $Y_1$, \ldots, $Y_q$; then $\tra \, \calY M$ is the matrix 
$[R(Y_i)^{(k-1)}]_{1\leq i,k\leq q}$, and assumption $(i)$ of Theorem \ref{thshid} yields $\calY \in M_q(\loga)$. Moreover the first $\roro$ rows of $ \tra \, \calY M$ are identically zero.

\bigskip

As in Step 2 we fix a square submatrix $M_0(z)$ of $M(z)$ of size $\mu=q-\roro$, such that $\det M_0(z)\neq 0$. We also denote by $k_1 < \ldots < k_{q-\roro}$ the indices $k$ such that the $k$-th column of $M(z)$ appears in $M_0(z)$. We shall consider now the matrix $M'_0\in M_{q-\roro}(\hol)$ obtained from $ \tra \, \calY M$ by removing the first $\roro$ rows (which are identically zero) and keeping only the columns with indices $k_1$, \ldots, $k_{q-\roro}$. In other words, we have $M'_0(z) = \big(R(Y_{\roro+i})^{(k_j-1)}\big)_{1\leq i,j \leq q-\roro}$.

\bigskip

Let $Z$ denote the submatrix of $ \tra \, \calY^{-1}$ obtained by keeping only the columns with indices $\roro+1$, \ldots, $q$ and the rows with indices $i_1$, \ldots, $i_{q-\roro}$ (where $1\leq i_1 < \ldots < i_{q-\roro}\leq q$ are the indices of the rows of $M$ that appear in $M_0$). Then we have $ZM'_0 = M_0$. 
 Moreover $\det \calY $ is the wronskian of $Y_1$, \ldots, $Y_q$: it is a solution of the first order differential equation
\begin{equation} \label{eqwr}
w'(z) = w(z) {\rm trace}(A(z)) .
\eneq
Now the generalized order at $\alpha$ of any non-zero solution of Eq. \eqref{eqwr}, and in particular that of $\det \calY$, can be bounded from above in terms of $A$ only. 
On the other hand, as in Step 2, all coefficients of $\calY$ have generalized order $\geq r_\alpha$ at $\alpha$. 
Since $\tra \,\calY ^{-1} = (\det \calY)^{-1}\, \tra\, {\rm Com}(\tra\, \calY)$, all coefficients of $\tra\, \calY^{-1}$, and accordingly $\det Z$, have generalized order at $\alpha$ bounded from below in terms of $A$ only. On the other hand, Step 2 shows that $\ord_\alpha\det M_0 \leq \omega_\alpha + \tau + \cstun$, where $\omega_\alpha = \lceil \sum_{j\in J_\alpha} \ord_\alpha R(Y_j)\rceil$ if $\alpha\in\Sigma$, and $\omega_\alpha =0$ otherwise. Finally, the matrix $M'_0 = Z^{-1}M_0$ satisfies 
\begin{equation} \label{eqmajoordmpr}
\ord_\alpha\det M'_0 \leq \omega_\alpha + \tau + \cstunnv
\end{equation}
where $\cstunnv$ depends only on $A$.

\bigskip

For any subset $\AAA$ of $\unqmro$ of cardinality $q'_\alpha := q-\roro-q_\alpha$, we denote by $\Delta_\AAA$ the determinant of the submatrix of $M'_0$ obtained by considering only the rows with index $i\geq q_\alpha+1$ and the columns with index $j\in \AAA$, and by $\Deltati_\AAA$ the one 
obtained by removing these rows and columns. Then Laplace expansion by complementary minors yields
\begin{equation} \label{eqLaplace}
 \det M'_0(z) = \sum_{\AAA \subset \unqmro \atop \Card \AAA = q'_\alpha} \eps_\AAA \Delta_\AAA(z)\Deltati_\AAA(z)
\end{equation}
with $\eps_\AAA\in\{-1,1\}$.
Using Eq. \eqref{eqmajoordmpr} there exists a subset $\AAA$ such that 
\begin{equation} \label{eqmajootr}
\otr := \ord_\alpha \Delta_\AAA(z) \leq \omega_\alpha + \tau - \cstunnv - \ord_\alpha \Deltati_\AAA(z) .
\end{equation} 
Now for any $i \in J_\alpha= \{\roro+1,\ldots,\roro+q_\alpha\}$ and any $k\in\unq$ we have $\ord_\alpha R(Y_i)^{(k-1)}\geq \ord_\alpha R(Y_i) - (q-1)$ so that 
 $\ord_\alpha \Deltati_\AAA(z) \geq \omega_\alpha - q_\alpha(q-1)$. Therefore Eq. \eqref{eqmajootr} yields
$\otr \leq \tau + \csttr$ for some constant $\csttr$ depending only on $A$ and $\Sigma$. 
Using this upper bound we shall prove now that $\otr$ is a non-negative integer, and $\Delta_\AAA ^{(\otr)}(z) $ has a finite non-zero limit as $z$ tends to $\alpha$. 

\bigskip

Since $\calY \in M_q(\loga)$ and $P_{i,k}$ has no pole at $\alpha$ for $k\leq q$, we have $\Delta_\AAA (z) \in \loga $ so that 
\begin{equation} \label{eqdeltae}
\Delta_\AAA (z) = \sum_{e\in\cale} \sum_{i=0}^I \lambda_{i,e} \, h_{i,e}(z) (z-\alpha)^e ( \log(z-\alpha))^i
\end{equation}
where $h_{i,e}(z) $ is holomorphic at $\alpha$ and $\cale$ is a finite subset of $\C$ such that for any $e\in\cale$, either $e=0$ or $\Re(e)>0$. Moreover we may assume that $e-e'\not\in\Z$ for any distinct $e,e'\in\cale$, and that for any $e\in\cale$ there exists $i$ such that $\lambda_{i,e} h_{i,e}(\alpha)\neq 0$. At last, the integer $I$ can be chosen in terms of $A$ only, since the exponents of $\log(z-\alpha)$ in local solutions at $\alpha$ of $Y' = AY$ are bounded. 

We choose the constant $\cstnum$ of Theorem \ref{thshid} to be $\cstnum = \csttr + I + q+1$. For any non-negative integer $\varpi \leq \cstnum +\tau - q - 1$, the $\varpi$-th derivative $\Delta_\AAA ^{(\varpi)}(z) $ is a $\Z$-linear combination of determinants of matrices of the form
$$N_{k'_1,\ldots,k'_{q'_\alpha}} = \big(R(Y_{\roro+q_\alpha + i})^{(k'_j-1)}\big)_{1\leq i,j \leq q'_\alpha}$$
with $1\leq k'_1 < \ldots < k'_{q'_\alpha} \leq q + \varpi < \cstnum +\tau $. Since $Y_i \in M_{q,1}(\loga)$ and $P_{k,i}$ is assumed to be holomorphic at $\alpha$ for any $i$ and any $k< \tau + \cstnum$, we have $
R(Y_i)^{(k-1)} \in \loga$. Accordingly $\det N_{k'_1,\ldots,k'_{q'_\alpha}} \in \loga$, and finally $\Delta_\AAA ^{(\varpi)}(z) \in\loga$ for any non-negative integer $\varpi \leq \cstnum +\tau - \roro n - q - 1$. Therefore in the expression \eqref{eqdeltae}, all pairs $(e,i)$ such that 
$\lambda_{i,e} h_{i,e}(\alpha)\neq 0$ and 
$\Re(e)+i \leq \cstnum +\tau - q - 1$ satisfy $e\in\N$ and $i=0$. Now recall that $\otr = \ord_\alpha \Delta_\AAA(z) \leq \tau + \csttr = \cstnum +\tau - q - 1 - I$. Then there is a term $(e,i)$ in Eq. \eqref{eqdeltae} such that $\lambda_{i,e} h_{i,e}(\alpha)\neq 0$ and $\Re(e) = \otr \leq \cstnum +\tau - q - 1 - I$, and accordingly $
\Re(e)+i \leq \cstnum +\tau - q - 1$: we have $e\in\N$, $i=0$, and no other term $(e',i')$ such that $\lambda_{i',e'} h_{i',e'}(\alpha)\neq 0$ satisfies $\Re(e') = \otr$. In particular $\otr$ is a non-negative integer, and $\Delta_\AAA ^{(\otr)}(z) $ has a finite non-zero limit as $z$ tends to $\alpha$.

\bigskip

Let $\evalpha : \loga\to\C$ denote the regularized evaluation at $\alpha$, defined by $\evalpha (f) = \lambda_{0,0} \, h_{0,0}(\alpha)$ if $f(z)$ is the right hand side of Eq. \eqref{eqdeltae}, and of course $\evalpha (f) = 0$ if $0\not\in\cale$.
 The important point here is that any $e\in\cale$ satisfies either $e=0$ or $\Re(e)>0$, so that $\evalpha$ is a $\C$-algebra homomorphism; moreover $\evalpha(f)$ is equal to the limit of $f(z)$ as $z\to\alpha$ whenever this limit exists. In particular we have $
\evalpha( \Delta_\AAA ^{(\otr)})\neq 0$. Now, as above $\evalpha ( \Delta_\AAA ^{(\otr)}) $ is a $\Z$-linear combination of $\evalpha (\det N_{k'_1,\ldots,k'_{q'_\alpha}} )$ with $1\leq k'_1 < \ldots < k'_{q'_\alpha} \leq q + \otr < \cstnum +\tau $, so that $\evalpha (\det N_{k'_1,\ldots,k'_{q'_\alpha}} )\neq 0 $ for some tuple $(k'_1 , \ldots, k'_{q'_\alpha} )$. 
For this tuple we consider the equality $\tra \, \calYti \Mti = 
 N_{k_1,\ldots,k_{q'_\alpha}}$, where $\calYti \in M_{q,q'_\alpha}(\loga)$ is the matrix with columns $Y_{\roro+q_\alpha+1}$, \ldots, $Y_q$, and $\Mti = \big(P_{k'_j,i}\big)_{1\leq i \leq q, 1\leq j \leq q'_\alpha}$. The Cauchy-Binet formula yields
\begin{equation} \label{eqCB1} 
\det N_{k'_1,\ldots,k'_{q'_\alpha}} = \sum_{B \subset \unq \atop \Card B = q'_\alpha} \det {} \tra \, \calYti_B \cdot \det \Mti_B
\end{equation}
where $\calYti_B$ (resp. $\Mti_B$) is the square matrix consisting in the rows of $\calYti$ (resp. of $\Mti$) corresponding to indices in $B$.
 Extending $\evalpha$ coefficientwise to matrices, Eq. \eqref{eqCB1} yields
$$\evalpha \Big(\det N_{k'_1,\ldots,k'_{q'_\alpha}}\Big) = \sum_{B \subset \unq \atop \Card B = q'_\alpha} \evalpha \Big(\det{} \tra\, \calYti_B \Big)\cdot \evalpha \Big(\det \Mti_B\Big).$$
Now the left hand side is non-zero, so that $\evalpha (\det \Mti_B) \neq 0$ for some $B$. Since all coefficients $P_{k,i}$ are holomorphic at $\alpha$, so is $\det \Mti_B $ and therefore $\det (\Mti_B(\alpha) )= \evalpha (\det \Mti_B) \neq 0$. We have found an invertible submatrix of $M(\alpha)$ of size $q'_\alpha$, so that $\rk (M(\alpha)) \geq q'_\alpha$: this concludes the proof of Theorem \ref{thshid}.

\section{Linear independence of the linear forms} \label{secapplishid}

In this section we combine Theorems \ref{thpade} and \ref{thshid} to construct linearly independent linear forms (that will be used in \S \ref{secfin}, together with Siegel's linear independence criterion, to prove Theorem \ref{thintroun}). Our main result is Proposition \ref{propshid} that we shall state now, and prove in \S \S \ref{subsecshiddebut} and \ref{subsecshidfin} using an explicit computation of $J_f(z)$ (see \S \ref{ssec:expcompJf3110}).

\bigskip

Let 
$$I = \Big( \{1,\ldots,\lun\}\times \{1,\ldots, S\} \Big) \, \, \sqcup \, \, \{0,\ldots,\mu-1\} $$
and $q = \Card \, I = \lun S+\mu $. Elements of $I$ will be denoted by $(u,s)$ (with $1\leq u \leq \lun$ and $1\leq s \leq S$) or $u$ (with $0\leq u \leq \mu-1$). For any $n$ sufficiently large, Lemma \ref{lem600} provides a family $(\bfP_i)_{i\in I}$ of polynomials indexed by $I$, namely $\bfP_{u,s} = P_{u,s,n}$ and $\bfP_u = \Pti_{u,n}$; here the integer $n$ is omitted in the notation. 

Let us denote by $\calW$ the set of tuples
$$Y = (y_i)_{i\in I} = \Big( (y_{u,s})_{1\leq u \leq \lun, 1\leq s \leq S}, \, (\yti_u)_{0\leq u \leq \mu-1} \Big)$$
consisting in functions $y_{u,s}$ and $\yti_u$ holomorphic on $\dpr$ that obey the same differentiation rules as if they were given by $y_{u,s} = y_u^{[s]}$ and $\yti_u = \theta^u y$ with $y\in\ker L$; recall that $\dpr$ has been defined in \S \ref{subsecnotationspade}.
 In precise terms we require:
$$\left\{ 
\begin{array}{rcl}
y'_{u,s}(z) &=& \frac1{z} y_{u,s-1}(z) \mbox{ for any $1\leq u \leq \lun$ and $2\leq s \leq S$}\\
\\
y'_{u,1}(z) &=& z^{u-1} \yti_{0}(z) \mbox{ for any $1\leq u \leq \lun$} \\
\\
 \yti \hspace{0.3mm} ' _u (z) &=& \frac1{z} \yti_{u+1}(z) \mbox{ for any $0\leq u \leq \mu-2$}\\
 \\
L\yti_0 &=& 0 
\end{array}
\right.$$
Since $L$ has order $\mu$, there exist $R_0, \ldots, R_\mu\in\K(z)$ such that $L= \sum_{u=0}^{\mu } R_u(z) \theta^u$ and $R_\mu\neq 0$ ; the equation $L\yti_0 =0$ can be replaced with
$$\yti \hspace{0.3mm} ' _{\mu-1} (z) = \frac{-1}{z R_\mu(z)} \sum_{u=0}^{\mu-1} R_u(z) \yti_u(z).$$
We obtain in this way a square matrix $A$ of size $q$, with rows and columns indexed by $I$ and coefficients in $\K(z)$, such that $\calW$ is exactly the set of solutions holomorphic on $\dpr$ of the differential system $Y'=AY$.
Here and below, when $Y = (y_i)_{i\in I}$ is an element of $\calW$, we shall consider $Y$ as a column vector and let (as in \S \ref{secshid})
$$R(Y)(z) = \sum_{i\in I} \bfP_i(z) y_i(z) = 
\sum_{u=1}^{\lun}\sum_{s=1}^S P_{u,s,n}(z) y_{u,s}(z)+ \sum_{u=0}^{\mu-1} \Pti_{u,n}(z) \yti_u (z) .$$
The point is that if $y_{u,s} = y_u^{[s]}$ and $\yti_u = \theta^u y$ for some $y\in\ker L$, then $R(Y)(z)=J_y(z)$ with the notation of Theorem \ref{thpade}, and Eq. \eqref{eqderiR} yields
\begin{equation} \label{eqjder}
J_y^{(k-1)}(z) = R(Y)^{(k-1)}(z) = \sum_{i\in I} \bfP_{k,i}(z) y_i(z) \mbox{ for any $k\geq 1$ and any $z\in\dpr$.}
\end{equation}
In the following Proposition the $\bfP_{k,i}(z)\in \K(z)$ are evaluated at a point $z_0 \in\dpr\setminus\{0\}$, so that $z_0$ is not a singularity of $L$ or $A$, and accordingly not a pole of any of these rational functions (cf. Eq. \eqref{eqdefpki}).

\begin{Prop} \label{propshid} Under the assumptions of Theorem \ref{thpade}, suppose also that $r\geq 1$ and that $n$ is sufficiently large. Put $\roro = \dim( \cz\cap\ker L)$ and let $z_0 \in\dpr\setminus\{0\}$. Then:
\begin{enumerate}
\item[$(i)$] There exist pairwise distinct elements $i_1$, \ldots, $i_\roro$ of $I$ such that
$$\bfP_{k,i_t} (z_0) = \sum_{i \in I \setminus \{i_1,\ldots,i_\roro\}} \lambda_{i,t} \bfP_{k,i}(z_0) \mbox{ for any } t\in\{1,\ldots,\roro\} \mbox{ and any } k\geq 1
$$
with $\lambda_{i,t}\in\K$; here $i_1$, \ldots, $i_\roro$ and the $\lambda_{i,t}$ depend only on $L$ and $z_0$ but not on $k$.
\item[$(ii)$] There exist integers $1 \leq k_1 < k_2 < \ldots < k_{q-\roro}$, bounded from above in terms of $L$, $r$, $S$ only, such that the matrix $(\bfP _{k_j,i}(z_0))_{i\in I, \, 1\leq j \leq q-\roro}$ has rank $q-\roro$.
\end{enumerate}
\end{Prop}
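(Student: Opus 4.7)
The plan is to prove the two parts of the proposition separately: $(i)$ directly from the polynomial solutions of $L$ and Theorem~\ref{thpade}, and $(ii)$ by invoking the general Shidlovsky lemma (Theorem~\ref{thshid}) on the system $Y'=AY$.

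For $(i)$ I would fix a $\K$-basis $y^{(1)},\ldots,y^{(\roro)}$ of $\cz\cap\ker L$. Each $y^{(t)}$ being polynomial, Lemma~\ref{lemexplicitefns} (applied with $\capade{y^{(t)}}=0$ and $e=0$) shows that every iterated primitive $(y^{(t)})_u^{[s]}$ is again polynomial; hence the associated tuple $Y^{(t)}\in\calW$ has polynomial components and $J_{y^{(t)}}$ is itself a polynomial, whose degree is bounded by $n+c$ for some $c$ depending only on $L$ and $S$. Theorem~\ref{thpade}$(i)$ yields $J_{y^{(t)}}(z)=O(z^{(r+1)n+1})$, which for $n$ large and $r\geq 1$ forces $J_{y^{(t)}}\equiv 0$; equivalently $R(Y^{(t)})\equiv 0$, so Eq.~\eqref{eqjder} gives $\sum_{i\in I}\bfP_{k,i}(z)\,y^{(t)}_i(z)=0$ identically on $\dpr$, in particular at $z=z_0$. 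The $Y^{(t)}$ are $\C$-linearly independent solutions of $Y'=AY$, and since $z_0$ is not a singularity of $A$, uniqueness for first order linear ODE systems implies that the $Y^{(t)}(z_0)\in\C^q$ are themselves $\C$-linearly independent. Choosing $i_1,\ldots,i_\roro\in I$ such that $\bigl(y^{(t)}_{i_s}(z_0)\bigr)_{s,t}$ is invertible (its entries, hence those of its inverse, lie in $\K$ because the $y^{(t)}$ have $\K$-coefficients and $z_0\in\K$) and solving for $\bfP_{k,i_s}(z_0)$ yields $(i)$ with $\lambda_{i,t}\in\K$ independent of $k$.

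For $(ii)$ I would apply Theorem~\ref{thshid} at $\alpha=z_0$ with Shidlovsky set $\Sigma\cup\{0\}$, take $J=\{Y^{(1)},\ldots,Y^{(\roro)}\}$ from $(i)$, and choose remainder data as follows. At $\sigma=0$: the $\mu-\roro$ solutions $Y_y$ attached to a basis of $\ker L/(\cz\cap\ker L)$, whose remainders $J_y$ vanish to order $\geq(r+1)n+1$ or $\geq n-\kappa$ according to whether $y$ is holomorphic at $0$ (Theorem~\ref{thpade}$(i)$), together with the $(m-1)S$ ``integration constant'' solutions $e^{u,s'}$ ($1\leq u\leq m-1$, $1\leq s'\leq S$) having $y_{u,s}=(\log z)^{s-s'}/(s-s')!$ for $s\geq s'$ and all other components zero, whose remainders $R(e^{u,s'})=z^{n+1-u}\sum_{s\geq s'}c_{u,s,n}(\log z)^{s-s'}/(s-s')!$ vanish to order $\geq n+1-u$ by Lemma~\ref{lem600} and Eq.~\eqref{eqpetitdeg}. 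At $\sigma=\alpha\in\Sigma\setminus\{0\}$: the $m_\alpha$ solutions $\omal Y_y$ for $y$ in a basis of $\ker L/(\ker L\cap\hal)$, whose remainders $\omal J_y$ vanish to order $\geq(S-r)n-\kappa$ and are $\C$-linearly independent by Theorem~\ref{thpade}$(ii)$. Using $\sum_{\alpha\neq 0}m_\alpha=\ell$ and $\ell+m-1=\lun$, the total vanishing computes to $(q-\roro)n+O(1)$, so Eq.~\eqref{eqhypdetnn} holds with $\tau=O(1)$, hence $0\leq\tau\leq n-\cstun$ for $n$ large. Assumptions $(i)$ and $(iii)$ of Theorem~\ref{thshid} are immediate since $z_0$ is not a singularity of $A$, so the $P_{k,i}$ have no pole there. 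Theorem~\ref{thshid} then gives $\rk\bigl(\bfP_{k,i}(z_0)\bigr)_{i\in I,\,1\leq k<\tau+\cstnum}\geq q-\roro-\Card J_{z_0}=q-\roro$, as $z_0\notin\Sigma\cup\{0\}$; combined with the $\roro$ linear column relations of $(i)$ (forcing the rank to be at most $q-\roro$), equality holds, and any $q-\roro$ linearly independent columns of the resulting matrix produce indices $k_1<\cdots<k_{q-\roro}<\tau+\cstnum$ meeting $(ii)$.

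The delicate point is the $\C$-linear independence at $\sigma=0$ of the remainders $J_y$ and $R(e^{u,s'})$. The monomial shape $P_{u,s,n}(z)=c_{u,s,n}z^{n+1-u}$ for $u\leq m-1$ from Lemma~\ref{lem600} separates the $R(e^{u,s'})$ by their $z$-exponent and their $\log z$-degree, while the injectivity (for $n$ large) of $y\mapsto J_y$ on $\ker L/(\cz\cap\ker L)$ handles the $J_y$ side; alternatively, replacing the family by a maximal linearly independent subfamily only increases $\tau$ by a bounded amount and is thus harmless.
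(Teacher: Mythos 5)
Your treatment of part $(i)$ and the overall architecture of part $(ii)$ match the paper's proof: you derive $J_{f}\equiv 0$ for polynomial $f\in\cz\cap\ker L$ from Eq.~\eqref{eqpadehol} and Eq.~\eqref{eqJholo}, use bijectivity of evaluation at the non-singular point $z_0$ to get the $\roro$ relations of $(i)$, and you feed Theorem~\ref{thshid} at $\alpha=z_0$ with exactly the same families of remainders (the $\Yde$ at $0$, the ``integration constant'' solutions $\Yun$, the variations at each $\alpha\in\Sigma\setminus\{0\}$) and the same count giving $\tau=O(1)$. The problem is the point you yourself flag as delicate: the joint $\C$-linear independence at $\sigma=0$ of the remainders $J_{f^{\{\ppp\}}}$ ($\roro+1\leq\ppp\leq\mu$) and $R(\Yun)$. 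Your first argument does not close it. The asserted injectivity of $y\mapsto J_y$ on $\ker L/(\cz\cap\ker L)$ is precisely the hard content (in the paper it is a special case of Corollary~\ref{corbourrin}, proved via the explicit expansion of $J_f$ in Lemma~\ref{lemexplicite}), and even granting it, ``separating the $R(\Yun)$ by $z$-exponent and $\log z$-degree'' only gives independence of each family separately: for a non-holomorphic $f$ the remainder $J_f$ itself contains terms $z^{n+1-j}\log(z)^{\sigma}$ of exactly the same shape as the $R(\Yun)$, so a genuine cancellation between the two families must be excluded, and this is what Corollary~\ref{corbourrin} (applied to $J_f$ equal to a polynomial in $z$ and $\log z$ of the constrained degrees) is for. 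Without that lemma, or an equivalent analysis of the coefficients of $J_f$, the independence is unproved.

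Your fallback is not valid either. If the family at $0$ were dependent and you passed to a maximal independent subfamily, the left-hand side of Eq.~\eqref{eqhypdetnn} would drop by the order of each discarded remainder, i.e. by at least $n-\kappa$ (or $(r+1)n+1$) per remainder, so $\tau$ would have to grow like $n$, not by a bounded amount. This is fatal twice over: the hypothesis $\tau\leq n-\cstun$ of Theorem~\ref{thshid} is no longer guaranteed, and, decisively, the conclusion would only give indices $k_j<\tau+\cstnum$ of size roughly $n$, whereas Proposition~\ref{propshid}$(ii)$ requires the $k_j$ to be bounded in terms of $L$, $r$, $S$ only — a boundedness that is essential later (fixed $k$ in Proposition~\ref{prop:tan3} and in Lemmas~\ref{lemarith} and \ref{lemasy}). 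So the independence at $0$ must actually be proved, as in \S\ref{ssec:expcompJf3110} and \S\ref{subsecshidfin} of the paper, rather than circumvented.
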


Of course this proposition shows that the matrix $(\bfP _{k_j,i}(z_0))_{i\in I\setminus \{i_1,\ldots,i_\roro\}, \, 1\leq j \leq q-\roro}$
 is invertible; in the proof of Theorem \ref{thintroun} we shall apply Siegel's linear independence criterion to this matrix (see \S \ref{secfin}). 
 
 \bigskip

The rest of this section is devoted to the proof of Proposition \ref{propshid}. We begin with a technical lemma.

\subsection{Explicit computation of $J_f(z)$}\label{ssec:expcompJf3110}

In \S \ref{subsecshidfin} we shall use the following technical lemma, which gives an explicit expression of $J_f(z)$ (see Remark \ref{remsanslog} for an easier special case).

\begin{lem} \label{lemexplicite}
Let $f\in \mathcal N$ belong to the Nilsson class with rational exponents at 0; write
$$f(z) = \sum_{k\in \Q \atop k\geq \capaf} \sum_{i=0}^{e} a_{k,i} z^k \log (z)^i.$$
 Then we have
\begin{eqnarray*}
J_f(z) 
&=& \sum_{k\in \Q \atop k\geq \capaf} z^{n+1+k} \sum_{\lambda=0}^e \log (z)^\lambda \sum_{i=\lambda}^e a_{k,i} \binom{i}{\lambda} A^{(i-\lambda)}(k) \\
&+& \sum_{s=1}^S \sum_{i=0}^e \log (z)^{s+i} \sum_{j=1}^{n+1} z^{n+1-j} a_{-j,i} c_{j,s,n} \frac{i!}{(s+i)!}
\end{eqnarray*}
where $A^{(i-\lambda)}(k) $ is the $(i-\lambda)$-th derivative of $A(X) = n!^{S-r} \frac{ (X-rn+1)_{rn}}{(X+1)_{n+1}^S}$ taken at $X=k$ if $k\not\in\{-1,\ldots, -n-1\}$; the general definition of this number (valid for any $k\in\Q$) is given by
$$A^{(i-\lambda)}(k) = (i-\lambda)! \sum_{j=1 \atop j\neq -k }^{n+1} \sum_{s=1}^S c_{j,s,n} \frac{(-1)^{i-\lambda}\binom{s-1+i-\lambda}{s-1}}{(k+j)^{s+i-\lambda}}.$$
\end{lem}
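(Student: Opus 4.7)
The strategy is to substitute the explicit expression of $f_j^{[s]}(z)$ from Lemma~\ref{lemexplicitefns} into the partial-fraction representation
$$J_f(z) = \sum_{j=1}^{n+1}\sum_{s=1}^{S} c_{j,s,n}\, z^{n+1-j}\, f_j^{[s]}(z),$$
which was established in the proof of Lemma~\ref{lemvraiesol} through the identity $\widetilde{J_f} = J_f$ (the argument there being purely formal once Proposition~\ref{prop:1} is in force). Applying formula \eqref{eq:fjs3010} to each $f_j^{[s]}(z)$ splits the sum naturally into a ``singular'' contribution coming from the coefficients $a_{-j,i}$ (involving only powers of $\log(z)$) and a ``regular'' contribution coming from the coefficients $a_{k,i}$ with $k\neq -j$.

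The singular part reads
$$\sum_{j=1}^{n+1}\sum_{s=1}^{S} c_{j,s,n}\, z^{n+1-j} \sum_{i=0}^{e} a_{-j,i}\, i!\, \frac{\log(z)^{s+i}}{(s+i)!},$$
and after pulling the sums over $s$ and $i$ to the outside it matches the second line of the claimed identity verbatim.

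For the regular part I would combine $z^{n+1-j}\cdot z^{k+j}=z^{n+1+k}$ and then interchange the summations so that $k$, $\lambda$ and $i$ stand outside while $j$ and $s$ are inside. By the very definition of $A^{(i-\lambda)}(k)$ supplied in the statement, the resulting inner double sum
$$\sum_{j=1,\,j\neq -k}^{n+1}\sum_{s=1}^{S} c_{j,s,n}\, \frac{(-1)^{i-\lambda}\binom{s-1+i-\lambda}{s-1}}{(k+j)^{s+i-\lambda}}$$
equals $A^{(i-\lambda)}(k)/(i-\lambda)!$, and the identity $i!/(\lambda!(i-\lambda)!) = \binom{i}{\lambda}$ absorbs the factor $1/\lambda!$ coming from $\log(z)^\lambda/\lambda!$, yielding the first line of the claim. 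The identification of this regularized quantity with a genuine derivative, valid when $k\notin\{-1,\ldots,-n-1\}$, follows by differentiating the partial-fraction identity \eqref{eq:100} term by term $i-\lambda$ times, using $\frac{d^m}{dX^m}(X+j)^{-s} = (-1)^m s(s+1)\cdots(s+m-1)(X+j)^{-s-m}$ together with $\binom{s+m-1}{m}=\binom{s-1+m}{s-1}$.

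The only real difficulty is bookkeeping: four nested indices $(j,s,i,\lambda)$ must be interchanged cleanly, and the diagonal terms $j=-k$ must be peeled off before identifying the remaining sum with $A^{(i-\lambda)}(k)$. Everything else is a direct substitution, so I expect no conceptual obstacle beyond this combinatorial reshuffling.
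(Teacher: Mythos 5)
Your proposal is correct and follows essentially the same route as the paper: it starts from the identity $J_f(z)=\sum_{j=1}^{n+1}\sum_{s=1}^S c_{j,s,n}\,z^{n+1-j}f_j^{[s]}(z)$ obtained in the proof of Lemma \ref{lemvraiesol}, substitutes the explicit formula of Lemma \ref{lemexplicitefns}, and regroups the singular and regular contributions exactly as the paper does, the $\binom{i}{\lambda}$ and factorial bookkeeping matching the stated definition of $A^{(i-\lambda)}(k)$. The identification with a genuine derivative for $k\notin\{-1,\ldots,-n-1\}$ via termwise differentiation of \eqref{eq:100} is also the intended (and correct) justification.
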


\begin{proof}
To begin with, recall that in the proof of Lemma \ref{lemvraiesol} we have obtained the following identity:
$$
 J_f (z)=\sum_{j=1}^{n+1} \sum_{s=1}^S c_{j,s,n} z^{n+1-j}f_{j}^{[s]}( z).
$$
Using Lemma \ref{lemexplicitefns} we obtain
\begin{eqnarray*}
J_f(z) 
&=& \sum_{j=1}^{n+1} \sum_{s=1}^S c_{j,s,n} z^{n+1-j}
 \sum_{i=0}^e a_{-j, i} i! \frac{\log (z)^{s+i}}{(s+i)! } \\
&&+ \sum_{j=1}^{n+1} \sum_{s=1}^S c_{j,s,n} 
\sum_{k\in \Q\setminus\{-j\} \atop k\geq \capaf} z^{n+1+k} \sum_{\lambda=0}^{e} \frac{\log (z)^\lambda}{\lambda!} \sum_{i=\lambda}^e a_{k,i} (-1)^{i-\lambda} \frac{i! \binom{s-1+i-\lambda}{s-1} }{(k+j)^{s+i-\lambda}} \\
&=& \sum_{s=1}^S \sum_{i=0}^e \log (z)^{s+i} \sum_{j=1}^{n+1} z^{n+1-j} a_{-j, i} c_{j,s,n} \frac{i!}{(s+i)! }\\
&&+ \sum_{k\in \Q \atop k\geq \capaf} z^{n+1+k} \sum_{\lambda=0}^{e} \frac{\log (z)^\lambda}{\lambda!}\sum_{i=\lambda}^e a_{k,i} i! \sum_{j=1 \atop j\neq k }^{n+1} \sum_{s=1}^S c_{j,s,n} (-1)^{i-\lambda} \frac{\binom{s-1+i-\lambda}{s-1} }{(k+j)^{s+i-\lambda}} .
\end{eqnarray*}
This concludes the proof of Lemma \ref{lemexplicite}.
\end{proof}

\bigskip

\begin{coro} \label{corbourrin}
With the notations of Lemma \ref{lemexplicite}, assume that $S>e$ and
\begin{equation} \label{eqbourrin}
J_f(z) = \sum_{p=0}^n \sum_{s = 0}^{S-1} \alpha_{p,s} z^p \log (z)^s \mbox{ with } \alpha_{p,s} \in\C.
\end{equation} 
Then $f$ is a polynomial.
\end{coro}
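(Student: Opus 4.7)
The strategy is to substitute the explicit expression of $J_f(z)$ from Lemma~\ref{lemexplicite} into the identity \eqref{eqbourrin} and identify the coefficients of each monomial $z^\alpha\log(z)^\lambda$. Since these monomials (for $(\alpha,\lambda)\in\Q\times\N$) are linearly independent as functions on $\dpr$, we obtain a system of linear equations on the Nilsson coefficients $a_{k,i}$ of $f$. By hypothesis, the right-hand side of \eqref{eqbourrin} vanishes whenever $\alpha\notin\{0,1,\ldots,n\}\cap\Z$ or $\lambda\notin\{0,\ldots,S-1\}$.

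First I would exploit the assumption $S>e$ by examining coefficients of high powers of $\log(z)$, specifically $\lambda\in\{S,S+1,\ldots,S+e\}$. In this range only the second sum of Lemma~\ref{lemexplicite} can contribute, so for each $j\in\{1,\ldots,n+1\}$ the coefficient
$$\sum_{\substack{s+i=\lambda\\ 1\le s\le S,\,0\le i\le e}} a_{-j,i}\,c_{j,s,n}\,\frac{i!}{\lambda!}$$
of $z^{n+1-j}\log(z)^\lambda$ must vanish. Starting from $\lambda=S+e$, where only the pair $(s,i)=(S,e)$ contributes, and descending, a straightforward induction forces $a_{-j,e-t}=0$ for $t=0,1,\ldots,e$, provided $c_{j,S,n}\neq 0$. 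The latter follows by computing the residue at $k=-j$ in the partial fraction decomposition \eqref{eq:100}: its numerator is $n!^{S-r}(-j)(-j-1)\cdots(-j-rn+1)$, a nonzero product of negative integers. Hence $a_{-j,i}=0$ for all $1\le j\le n+1$ and $0\le i\le e$.

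Next I turn to the first sum of Lemma~\ref{lemexplicite}, which contributes $z^{n+1+k}\log(z)^\lambda$ with coefficient $\sum_{i=\lambda}^e a_{k,i}\binom{i}{\lambda}A^{(i-\lambda)}(k)$, and consider those $k\in\Q$, $k\ge\capaf$, such that $n+1+k$ is \emph{not} an integer in $\{0,\ldots,n\}$: that is, $k$ non-integer, or $k\in\Z$ with $k\le -n-2$, or $k\in\Z$ with $k\ge 0$. For such $(\alpha,\lambda)$ the coefficient must be zero for every $\lambda\in\{0,\ldots,e\}$. When $A(k)\neq 0$ (which holds for all these $k$ except those in $\{0,1,\ldots,rn-1\}$, since $A(X)=n!^{S-r}X(X-1)\cdots(X-rn+1)/\prod_j(X+j)^S$), a descending induction on $\lambda$ from $e$ to $0$ yields $a_{k,i}=0$ for all $i$. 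For $k\in\{0,1,\ldots,rn-1\}$ one has $A(k)=0$ but $A'(k)\neq 0$, since $k$ is a \emph{simple} root of the numerator of $A$; the same induction then yields $a_{k,i}=0$ for $i\ge 1$ only, leaving $a_{k,0}$ unconstrained.

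Combining both steps, the only possibly nonzero Nilsson coefficients of $f$ are the $a_{k,0}$ with $k\in\{0,1,\ldots,rn-1\}$, so $f(z)=\sum_{k=0}^{rn-1}a_{k,0}z^k\in\C[z]$. The main subtlety is the case $k\in\{0,\ldots,rn-1\}$ where $A(k)=0$: one must verify $A'(k)\neq 0$ and check that the induction, now anchored by $A'(k)$ rather than $A(k)$, yields $a_{k,i}=0$ exactly for $i\ge 1$ (and no constraint on $a_{k,0}$), which is precisely what the polynomial conclusion allows.
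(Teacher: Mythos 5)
Your proof is correct and follows essentially the same route as the paper: substituting the explicit expression of Lemma \ref{lemexplicite} into \eqref{eqbourrin} and identifying coefficients of $z^\tau\log(z)^\sigma$, treating separately the negative integers $k\in\{-n-1,\ldots,-1\}$ via the log-powers $\sigma\ge S$, the $k$ with $A(k)\neq 0$, and the simple zeros $k\in\{0,\ldots,rn-1\}$ of $A$ where $A'(k)\neq 0$. The only differences are the order of the three cases and your explicit verification of $c_{j,S,n}\neq 0$ and $A'(k)\neq 0$, which the paper asserts without detail.
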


\begin{Remark} Of course if $f$ is a polynomial then so is $f_j^{[s]}$ for any $j,s$, and therefore also $J_f(z)$.
\end{Remark}

\begin{proof}
In this proof we keep the notation of Lemma \ref{lemexplicite}, that we shall use repeatedly to compute the coefficient of $z^\tau \log (z)^\sigma$ in Eq. \eqref{eqbourrin} for various pairs $(\tau,\sigma)\in\Q\times\N$. The point is that this coefficient is zero if $\tau\not\in\{0,\ldots,n\}$ or $\sigma\not\in\{0,\ldots,S-1\}$.

To begin with, let $k\in\Q\setminus\{-n-1,\ldots,rn-1\}$. For any $\sigma\in\{0,\ldots,e\}$, considering the coefficient of $z^\tau \log (z)^\sigma$ with $\tau = k+n+1\not\in\{0,\ldots,n\}$ yields $\sum_{i=\sigma}^e a_{k,i} \binom{i}{\sigma} A^{(i-\sigma)}(k)=0$. Now $k \not\in\{0, \ldots,rn-1\}$ so that $A(k)\neq 0$; therefore by decreasing induction we obtain $a_{k,i}=0$ for any $i\in \{0,\ldots,e\}$.

Now let $k\in \{-n-1,\ldots,-1\}$. For any $\sigma\in\{S,\ldots,S+e\}$ we consider the coefficient of $z^\tau \log (z)^\sigma$ with $\tau = k+n+1$. Since $\sigma\geq S > e$ this yields $\sum_{i=\sigma-S}^e a_{k,i} c_{-k, \sigma-i, n} \frac{i!}{\sigma!}=0$. Using this relation with $\sigma = S+e$ we obtain $a_{k,e} = 0$ since $ c_{-k, S, n} \neq 0$. By decreasing induction we prove that $a_{k,i}=0$ for any $i\in \{0,\ldots,e\}$.

At last we take $k\in \{0,\ldots,rn-1\}$. Since $A(k)=0$, considering the coefficient of $z^{k+n+1}\log (z)^\sigma$ yields $\sum_{i=\sigma+1}^e a_{k,i} \binom{i}{\sigma} A^{(i-\sigma)}(k)=0$ for any $\sigma\in\{0,\ldots,e-1\}$. By decreasing induction this implies $a_{k,i}=0$ for any $i\in \{1,\ldots,e\}$ because $A'(k)\neq 0$.

In conclusion we have proved that $a_{k,i}=0$ for any pair $(k,i)\in \Q \times \{0,\ldots,e\}$, except maybe when $k\in \{0,\ldots,rn-1\}$ and $i=0$. Therefore $f$ is a polynomial of degree less than $rn$.
\end{proof}

\subsection{Proof of Proposition \ref{propshid} $(i)$: polynomial solutions} \label{subsecshiddebut}

In this section we focus on polynomial solutions of $L$, and prove that each one provides a solution of the differential system $Y'=AY$ such that $R(Y)=0$. This will enable us to prove part $(i)$ of Proposition \ref{propshid}.

\bigskip

Recall that $\roro = \dim (\cz\cap\ker L)$, and choose a basis $( f^{\{1\}},\ldots, f^{\{\roro\}})$ of $\cz\cap\ker L$. For any $\ppp\in\{1,\ldots,\roro\}$ we let 
$$\Yde = \Big( ( (f^{\{\ppp\}})_u^{[s]}) _{1\leq u \leq \lun, 1\leq s \leq S}, \, (\theta^u f^{\{\ppp\}}) _{0\leq u \leq \mu-1} \Big).$$
 Then we have $\Yde\in\calW$, and $R(\Yde)(z) = J_{f^{\{\ppp\}}}(z)$. Using Eq. \eqref{eqpadehol} we obtain 
 $\ord_0 R(\Yde)\geq (r+1)n +1$ for any $\ppp\in\{1,\ldots,\roro\}$. Now 
 $f^{\{\ppp\}}$ is a polynomial and its degree is bounded in terms of $L$ only. Therefore Eq. \eqref{eqJholo} shows that $R(\Yde) = J_{f^{\{\ppp\}}} $ is a polynomial of degree at most $n+c$, where $c$ depends only on $L$. Since $r\geq 1$ and $n$ is large enough, we deduce that $J_{f^{\{\ppp\}}} = R(\Yde)$ is the zero polynomial for any $\ppp\in\{1,\ldots,\roro\}$. Therefore Eq. \eqref{eqjder} 
shows that for any $k\geq 1$, the values $x_i = \bfP_{k,i}(z_0)$ make up a solution of the following linear system:
 \begin{equation} \label{eqsyslin}
 \sum_{i\in I} \petityde_i(z_0) x_i = 0 \mbox{ for any } \ppp \in \{1,\ldots, \roro\}.
 \end{equation}
Now $z_0 \in\dpr\setminus\{0\}$ so that $z_0$ is not a singularity of $L$, and not of the differential system $Y'=AY$ either. Therefore the map $\calW\to \C^I$, $Y = (y_i) \mapsto (y_i(z_0))_{i\in I}$ is bijective. Since $\Ydeun$, \ldots, $\Yderoro$ are $\C$-linearly independent (because $f^{\{1\}}$, \ldots, $f^{\{\roro\}}$ are and $\theta^0 f^{\{\ppp\}} = f^{\{\ppp\}}$ is a component of $\Yde$), we deduce that the matrix $[ \petityde_i(z_0) ]_{i\in I, 1\leq \ppp \leq \roro}$ of the linear system \eqref{eqsyslin} has rank $\roro$. This linear system can therefore be put in reduced row-echelon form as follows: there exist pairwise distinct elements $i_1$, \ldots, $i_\roro$ of $I$, and coefficients $\lambda_{i,t}\in\K$, such that this system is equivalent to 
$$ x_{i_t} = \sum_{i \in I \setminus \{i_1,\ldots,i_\roro\}} \lambda_{i,t} x_i \mbox{ for any } t \in \{1,\ldots, \roro\}.
$$
Since $x_i = \bfP_{k,i}(z_0)$ is a solution of this linear system for any $k\geq 1$, this concludes the proof of part $(i)$ of Proposition \ref{propshid}.

\subsection{Proof of Proposition \ref{propshid} $(ii)$} \label{subsecshidfin}

In this section we check the assumptions of Theorem \ref{thshid} to apply Shidlovsky's lemma to the solution of the Pad\'e approximation problem given by Theorem \ref{thpade}, thereby proving assertion $(ii)$ of Proposition \ref{propshid}. 

\bigskip

The notation of the present section is the same as those of Sections \ref{secpadestatement} and \ref{secpadepreuve}. It is consistent with the one of \S \ref{subsecsetting}, except for the following. We fix a bijective map $I \to \{1,\ldots,q\}$ so that the family $(\bfP_i)_{i\in I}$ of polynomials involved in Theorem \ref{thpade} can be written as $(P_1,\ldots,P_q)$. The integer $n$ of \S \ref{subsecsetting}, which is an upper bound on $\deg P_i$, is taken equal to $ n+1+S(\ell-1)$. The finite subset denoted by $\Sigma$ in 
\S \ref{subsecsetting} is $\Sigma\cup\{0\}$, where $\Sigma$ is the set of finite singularities of $L$. The family of solutions of the differential system $Y'=AY$ associated with each element of $\Sigma\cup\{0\}$ will be defined below. We let $J = \{1,\ldots,\roro\}$, where $\roro = \dim(\cz\cap\ker L)$. We have constructed in \S \ref{subsecshiddebut} linearly independent solutions $\Yde$ of the differential system $Y'=AY$, for $1 \leq \ppp \leq \roro$, such that $R(\Yde)=0$ for any $\ppp \in \{1,\ldots,\roro\}$.
 We shall apply Theorem \ref{thshid} with $\alpha = z_0$. Since $z_0 \in\dpr\setminus \{0\}$, it is not a singularity of the differential system $Y'=AY$ so that assumptions $(i)$ and $(iii)$ of Theorem~\ref{thshid} hold immediately. Moreover the conclusion of Theorem \ref{thshid} is exactly that of part $(ii)$ of Proposition \ref{propshid}. Therefore to conclude the proof it is enough to check that assumption $(ii)$ of Theorem \ref{thshid} holds; this is what we shall do now. We refer to \S \ref{subseccompte} for a more or less informal presentation of the following ideas.

\bigskip

To begin with, let us consider the vanishing conditions at a non-zero singularity $\alpha\in\Sigma\setminus\{0\}$.
Recall from \S \ref{subseccompte} that $m_\alpha = \dim ( \ker L / ( \hal \cap \ker L))$ is the multiplicity of $\alpha$ as a singularity of $L$. Let $(f^{\{\alpha, 1\}}, \ldots, f^{\{\alpha, m_\alpha\}} )$ be a basis of $ \ker L / ( \hal \cap \ker L)$. For any $1\leq \ppp\leq m_\alpha $, let 
$$Y_{\alpha, \ppp} = \Big( ( \omal ( (f^{\{\alpha, \ppp\}}) ^{[s]}_{u}))_{1\leq u \leq \lun, 1\leq s \leq S}, \, ( \omal( \theta^u f^{\{\alpha, \ppp\}}))_{0\leq u \leq \mu-1} \Big) \in \calW; $$
in other words, $( Y_{\alpha, \ppp} )_{u,s} = \omal ( (f^{\{\alpha, \ppp\}}) ^{[s]}_{u}) $ for any $u,s$, and $( Y_{\alpha, \ppp} )_{u} = \omal( \theta^u f^{\{\alpha, \ppp\}})$ for any $u$. Then Eq. \eqref{eqpadealpha} of Theorem \ref{thpade} reads $R( Y_{\alpha, \ppp} )(z) = O((z-\alpha)^{(S-r)n-\kappa})$ as $z\to \alpha$, that is 
\begin{equation} \label{eqannulalp} 
\ord_{\alpha} R( Y_{\alpha, \ppp} ) \geq (S-r)n-\kappa \mbox{ for any } 1\leq \ppp \leq m_\alpha.
 \end{equation}

Let us prove that $R( Y_{\alpha, 1})$, \ldots, $ R( Y_{\alpha, m_\alpha})$ are linearly independent over $\C$. Let $\lambda_1$, \ldots, $\lambda_{m_\alpha} \in \C$ be such that $\lambda_1 R( Y_{\alpha, 1}) + \ldots + \lambda_{m_\alpha} R( Y_{\alpha, m_\alpha}) = 0$, and put $f = \lambda_1 f^{\{\alpha, 1\}} + \ldots + \lambda_{m_\alpha} f^{\{\alpha, m_\alpha\} }$. Then we have
$$\omal(J_f)(z) = \sum_{u=1}^{\lun}\sum_{s=1}^S P_{u,s,n}(z) \omal ( f ^{[s]}_{u})(z)+ \sum_{u=0}^{\mu-1} \Pti_{u,n}(z) \omal( \theta^u f)(z) =0 .$$
 As asserted in Theorem~\ref{thpade}~$(ii)$, this implies $f$ holomorphic at $\alpha$, i.e. $f=0$ in the quotient space $ \ker L / ( \hal \cap \ker L)$, 
 so that $\lambda_1 = \ldots = \lambda_{m_\alpha} = 0$. This concludes the proof that $R( Y_{\alpha, 1})$, \ldots, $ R( Y_{\alpha, m_\alpha})$ are linearly independent over $\C$. 

\bigskip

Let us move now to the conditions around $z=0$, namely parts $(i)$ and $(iii)$ of Theorem~\ref{thpade}, starting with $(iii)$. Given $u_0 \in \{1,\ldots,m-1\}$ and $s_0 \in \unS$ we define $\Yun$ by
$$\Yun_{u_0,s} (z)= \frac{\log (z)^{s-s_0}}{(s-s_0)!} \mbox{ for } s\in \{s_0,\ldots,S\}$$
and $\Yun_i (z)= 0$ for all other $i\in I$, namely $i = (u_0,s)$ with $s<s_0$, $i = (u,s) $ with $u\neq u_0$, or $i = u \in \{0,\ldots,\mu-1\}$. Then we have $\Yun \in \calW$ and
$$R(\Yun) (z)= \sum_{s=s_0} ^S P_{u_0,s,n} (z) \frac{\log (z)^{s-s_0}}{(s-s_0)!} .$$
For any $s\in \{s_0,\ldots,S\}$ Eq. \eqref{eqpetitdeg} yields $ P_{u_0,s,n} = O(z^{n+1-u_0})$ as $z\to 0$, so that
\begin{equation} \label{eqordiv}
\ord_0 R(\Yun)\geq n+1-u_0.
 \end{equation}
For any fixed $u_0 \in \{1,\ldots,m-1\}$ we have obtained $S$ vanishing conditions \eqref{eqordiv} (namely for $1\leq s_0 \leq S$), with non-holomorphic remainders; they correspond to the $S$ equations \eqref{eqpetitdeg} (for $1\leq s \leq S$) in which no logarithm appears, but no solution of the differential system $Y'=AY$ either. This is an illustration of the phenomenon explained in \S \ref{subsecnonholopade}: to apply Shidlovsky's lemma we may have to translate some vanishing conditions in order to express them in terms of solutions of the differential system. 

\bigskip

Let us deal now with assertion $(i)$ of Theorem \ref{thpade}. Recall that $\roro = \dim (\cz\cap\ker L)$, $\ell = \dim (\frac{\hz\cap\ker L}{\cz\cap\ker L}) $, and that in \S \ref{subsecshiddebut} we have chosen a basis $( f^{\{1\}},\ldots, f^{\{\roro\}})$ of $\cz\cap\ker L$. Let $f^{\{\roro+1\}},\ldots, f^{\{\roro+\ell\} }$ be such that $( f^{\{1\}},\ldots, f^{\{\roro+\ell\}})$ is a basis of $\hz\cap\ker L$. At last, choose $ f^{\{\roro+\ell+1\}},\ldots, f^{\{\mu\}} $ such that $( f^{\{1\}},\ldots, f^{\{\mu\}})$ is a basis of $\ker L$. For any $\ppp\in\unmu$ we let 
$$\Yde = \Big( ( (f^{\{\ppp\}})_u^{[s]}) _{1\leq u \leq \lun, 1\leq s \leq S}, \, (\theta^u f^{\{\ppp\}}) _{0\leq u \leq \mu-1} \Big),$$
which is consistent with the notation $\Yde$ introduced in \S \ref{subsecshiddebut} for $\ppp \in \{1, \ldots, \roro\}$. 
 Then we have $\Yde\in\calW$, and $R(\Yde)(z) = J_{f^{\{\ppp\}}}(z)$ for any $\ppp \in \{1,\ldots,\mu\}$. Using Eqns. \eqref{eqpadehol} and \eqref{eqpadenonhol} we obtain 
 \begin{equation} \label{eqordii}
\ord_0 R(\Yde)\geq \left\{ \begin{array}{l} (r+1)n +1 \mbox{ for any } \ppp \in\unrhoplusell , \\
n-\kappa \mbox{ for any } \ppp \in \{\roro+\ell+1,\ldots,\mu\}.
\end{array} \right.
 \end{equation}

 \bigskip

We have proved in \S \ref{subsecshiddebut} that 
 \begin{equation} \label{eqzerorho}
R(\Yde) \mbox{ is identically zero for }1\leq \ppp\leq \roro.
 \end{equation}
 This condition ``replaces'' the vanishing condition \eqref{eqordii} for these values of $\ppp$. 
 
 \bigskip
 
 In order to apply Theorem \ref{thshid} we still have to prove some results of linear independence. To begin with, as noticed in \S \ref{subsecshiddebut}, 
 the functions $ f^{\{1\}},\ldots, f^{\{\roro\}}$ are linearly independent over $\C$, so that the vectors $\Yde$ (for $1\leq \ppp \leq \roro$) involved in Eq. \eqref{eqzerorho} are also linearly independent (recall that $f^{\{\ppp\}} = \theta^0 f^{\{\ppp\}} $ is a component of $\Yde$). 

Let us prove now that the functions $R(\Yun)$ and $R(\Yde)$ involved in Eqns. \eqref{eqordiv} and \eqref{eqordii} (with $\roro+1\leq \ppp\leq\mu$) are linearly independent over $\C$. Let $\lamde$ (for $\roro+1\leq \ppp \leq \mu$) and $\lamun$ (for $1\leq u_0 \leq m-1$ and $1\leq s_0\leq S$) be complex numbers such that
$$ \sum_{\ppp=\roro+1}^\mu \lamde R(\Yde) + \sum_{u_0=1}^{m-1} \sum_{s_0 = 1}^S \lamun R(\Yun) =0.$$
Letting $f = \sum_{\ppp=\roro+1}^\mu \lamde f^{\{\ppp\}}\in\ker L$ we have using Lemma \ref{lemvraiesol}:
\begin{equation} \label{eqintermedde}
J_f(z) + \sum_{u_0=1}^{m-1} \sum_{s_0 = 1}^S \lamun \sum_{s=s_0} ^S P_{u_0,s,n}(z) \frac{\log (z)^{s-s_0}}{(s-s_0)!} = 0 .
 \end{equation}
Now $f $ belongs to the Nilsson class with rational exponents at 0; assume that $f\neq 0$. Then we may write 
$$f(z) = \sum_{k\in \Q \atop k\geq \capaf} \sum_{i=0}^{e} a_{k,i} z^k \log (z)^i $$
with $e$ bounded in terms of $L$ only. Therefore we may assume $S>e$, and Corollary \ref{corbourrin} yields $f\in\cz$ using Eq. \eqref{eqintermedde}. By construction of $f^{\{\roro+1\}}$, \ldots, $f^{\{\mu\}}$ this implies $f=0$. Since $f^{\{\roro+1\}} $, \ldots,$f^{\{\mu\}} $ are linearly independent over $\C$ we deduce that $ \lamde =0$ for any $\ppp\in\{ \roro+ 1, \ldots, \mu \}$. 

Moreover, recall from the proof of Lemma \ref{lem600} that $P_{u_0,s,n} (z) = c_{u_0,s,n} z^{n+1-u_0}$ for any $u_0\in \{1,\ldots,m-1\}$ and any $s\in \{1,\ldots,S\}$. Since $f=0$, Eq. \eqref{eqintermedde} reads
$$\sum_{u_0 =1}^{m-1} z^{n+1-u_0} \sum_{\sigma=0}^{S-1} \frac{\log (z)^\sigma}{\sigma!} \sum_{s_0 =1}^{S-\sigma} \lamun c_{u_0,\sigma+s_0, n} = 0$$
for any $z\in\dpr$, so that $ \sum_{s_0 =1}^{S-\sigma} \lamun c_{u_0,\sigma+s_0, n} = 0$ for any $u_0$ and any $\sigma$. With $\sigma=S-1$ we obtain $\lamunun = 0$ since $ c_{u_0,S, n} \neq 0$; by induction on $s_0$ it follows in the same way that $\lamun=0$ for any $u_0$ and any $s_0$. Finally, the functions $R(\Yde)$ and $R(\Yun)$ involved in Eqns. \eqref{eqordiv} and \eqref{eqordii} (with $\roro+1\leq \ppp \leq \mu$) are linearly independent over $\C$.

\bigskip

Combining these results of linear independence with Eqns. \eqref{eqannulalp}, \eqref{eqordiv}, \eqref{eqordii} and \eqref{eqzerorho},
 we have checked assumption $(ii)$ of Theorem \ref{thshid} with $\tau$ independent from $n$, since $\sum_{\alpha\in\Sigma\setminus\{0\}}m_\alpha = \ell$ (see \S \ref{subseccompte}) and $q = \lun S+\mu$. As explained at the beginning of \S \ref{subsecshidfin}, assertion $(ii)$ of Proposition \ref{propshid} follows.

\section{A Siegel-type linear independence criterion}\label{sec:critere}

The following criterion is based on Siegel's ideas (see for instance \cite[pp. 81--82 and 215--216]{EMS}, \cite[\S 3]{Matala-Aho}, \cite[\S 4.6]{SFcaract} or \cite[Proposition 4.1]{Marcovecchio}). 

\bigskip

Let $\K$ be a number field embedded in $\C$; denote by $\OK$ its ring of integers. We fix an embedding of $\K$ in a Galois closure $\L$, and of $\L$ in $\C$, so that Galois conjugates of elements of $\K$ can be seen as complex numbers. 
Given $\xi\in\K$, we denote by $\house{\xi}$ the house of $\xi$, i.e. the maximum modulus of the Galois conjugates of $\xi$.

\begin{Th}\label{th:siegel} 
Let $(Q_n)$ be an increasing sequence of positive real numbers, with limit $+\infty$.

Consider $N$ numbers $\vartheta_1, \ldots, \vartheta_N\in\C$. Assume that for some $\tau > 0$ there exist $N^2$ sequences $(p_{i,n}^{(j)})_{n\ge 0}$, $i,j=1, \ldots, N$, such that:
\begin{itemize}
\item For any $i$, $j$ and $n$, we have $p_{i,n}^{(j)}\in \OK$.
\item For any $i$, $j$,we have $\house{ p_{i,n}^{(j)} } \leq Q_n^{1+o(1)} $ as $n\to\infty$.
\item For any $j$ we have, as $n\to\infty$:
$$ \bigg| \sum_{i=1}^N p_{i,n}^{(j)}\, \vartheta_i \bigg| \leq Q_n^{-\tau+o(1)}.$$
\item For any $n$ sufficiently large the matrix $[ p_{i,n}^{(j)} ]_{1\leq i,j\leq N}$ is invertible.
\end{itemize}
Then 
$$
\dim_\K \textup{Span}_{\K} (\vartheta_1, \ldots, \vartheta_N) \geq \frac{\tau+1}{[\K:\Q]},
$$
and this lower bound can be refined to $\frac{2(\tau+1)}{[\K:\Q]}$ if $\K$ (seen as a subset of $\C$) is not contained in $\R$.
\end{Th}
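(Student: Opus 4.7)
The plan is a standard Siegel-style determinant argument. Set $d = [\K:\Q]$ and $r := \dim_\K \textup{Span}_\K(\vartheta_1,\ldots,\vartheta_N)$; I will prove $r \ge (\tau+1)/d$, with improvement to $r \ge 2(\tau+1)/d$ when $\K \not\subseteq \R$. First I would reduce to an $r \times r$ situation: choose $\vartheta_{i_1},\ldots,\vartheta_{i_r}$ forming a $\K$-basis of the span, write $\vartheta_i = \sum_{k=1}^r c_{i,k}\vartheta_{i_k}$ with $c_{i,k} \in \K$, and let $D \in \OK \setminus \{0\}$ clear the denominators of the $c_{i,k}$. Substituting into $L_j^{(n)} := \sum_i p_{i,n}^{(j)} \vartheta_i$ yields
$$D\, L_j^{(n)} = \sum_{k=1}^r q_{k,n}^{(j)} \vartheta_{i_k}, \qquad q_{k,n}^{(j)} := D \sum_i p_{i,n}^{(j)} c_{i,k} \in \OK,$$
with $\house{q_{k,n}^{(j)}} \le Q_n^{1+o(1)}$ (the $n$-independent constants coming from $D$ and from the Galois conjugates of the $c_{i,k}$ being absorbed into the $o(1)$). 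The $N\times r$ matrix $(q_{k,n}^{(j)})$ factors as $D\, P_n C$ with $C = (c_{i,k})_{i,k}$; since the identities $\vartheta_{i_\ell} = \sum_k c_{i_\ell,k}\vartheta_{i_k}$ force the rows of $C$ indexed by $i_1,\ldots,i_r$ to be the standard basis, $C$ has rank $r$, so invertibility of $P_n$ makes $(q_{k,n}^{(j)})$ of rank $r$. For each $n$, pick an $r$-element set of row indices yielding a nonsingular $r\times r$ minor; by pigeonhole a fixed set works for infinitely many $n$. Restricting to that subsequence and relabelling, I may assume it is $\{1,\ldots,r\}$ and denote the corresponding invertible matrix by $\tilde Q_n$.

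The main estimate then comes from Cramer's rule. Set $\Delta_n := \det \tilde Q_n \in \OK \setminus \{0\}$. Solving $\tilde Q_n \cdot (\vartheta_{i_1},\ldots,\vartheta_{i_r})^T = D \cdot (L_1^{(n)},\ldots,L_r^{(n)})^T$ expresses $\vartheta_{i_k} \Delta_n$ as the determinant obtained by replacing the $k$-th column of $\tilde Q_n$ with $D\,(L_1^{(n)},\ldots,L_r^{(n)})^T$; expanding along that column gives a signed sum of $r$ products of an $(r-1)\times(r-1)$ minor of $\tilde Q_n$ (of modulus $\le Q_n^{(r-1)(1+o(1))}$) with some $L_j^{(n)}$ (of modulus $\le Q_n^{-\tau+o(1)}$). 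Hence, for any $k$ with $\vartheta_{i_k}\neq 0$ (such a $k$ exists unless all $\vartheta_i$ vanish, in which case $\Delta_n = 1$ by the empty-minor convention contradicts the lower bound below),
$$|\Delta_n| \le Q_n^{\,r - 1 - \tau + o(1)}.$$

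For the matching lower bound I would invoke the product formula: since $\Delta_n$ is a nonzero algebraic integer, $\prod_{\sigma} |\sigma(\Delta_n)| \ge 1$ over the $d$ embeddings $\sigma : \K \hookrightarrow \C$. The house bounds on the entries of $\tilde Q_n$ give $|\sigma(\Delta_n)| \le Q_n^{\,r(1+o(1))}$ for every $\sigma$, so isolating the given embedding yields $|\Delta_n| \ge Q_n^{\,-r(d-1) - o(1)}$. Combining with the previous step gives $-r(d-1) \le r - 1 - \tau + o(1)$, hence $rd \ge \tau + 1$. When $\K \not\subseteq \R$, the composition $\bar\iota$ of the given embedding $\iota$ with complex conjugation is a further distinct embedding satisfying $|\bar\iota(\Delta_n)| = |\Delta_n|$; isolating both $\iota$ and $\bar\iota$ from the product sharpens the lower bound to $|\Delta_n|^2 \ge Q_n^{\,-r(d-2) - o(1)}$, whence $rd \ge 2(\tau+1)$.

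The only non-routine step is the pigeonhole selection of a common $r$-set of row indices working for infinitely many $n$; the rest is bookkeeping to absorb all $n$-independent constants (houses of $D$ and of the $c_{i,k}$, combinatorial factorials, the integer $r$) into the $o(1)$ exponents of $Q_n$.
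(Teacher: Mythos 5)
Your proposal is correct and is essentially the paper's own argument: both proofs produce an invertible matrix over $\OK$ whose determinant is a nonzero algebraic integer, bound its norm below by $1$ and above by combining the house bounds $Q_n^{1+o(1)}$ at the other embeddings with the smallness $Q_n^{-\tau+o(1)}$ of the linear forms at the given embedding (and at its complex conjugate when $\K\not\subset\R$), yielding exactly the same exponent count $\dim\cdot[\K:\Q]\geq \tau+1$ (resp. $2(\tau+1)$). The only difference is bookkeeping: you reduce to an $r\times r$ determinant by substituting a basis of the span and applying Cramer's rule, whereas the paper keeps an $N\times N$ determinant by appending a fixed integral relation matrix $A$ with $A\Theta=0$; both implicitly assume (as one must, since the statement is vacuously false otherwise) that not all $\vartheta_i$ vanish.
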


Given real numbers $0 <a_0 < 1 <b$, this theorem can be applied when
 $\house{ p_{i,n}^{(j)} } \leq b^{n(1+o(1))}$ and 
$ \Big| \sum_{i=1}^N p_{i,n}^{(j)} \vartheta_i \Big| \leq a_0^{n(1+o(1))}$; then
$$
\dim_\K \textup{Span}_{\K} (\vartheta_1, \ldots, \vartheta_N)\ge \frac{1}{[\K:\Q]} \Big( 1 - \frac{\log(a_0)}{\log(b)}\Big) 
$$
with the right hand side multiplied by 2 if $\K\not\subset\R$.

\bigskip

\begin{proof}The proof is very classical, and similar (for instance) to that of \cite[Proposition 4.1]{Marcovecchio}. We sketch it for the convenience of the reader. Let $\Theta \in M_{N,1}( \C )$ denote the column matrix $\tra\, [\vartheta_1 \ldots \vartheta_N]$, and $\delta = \dim_\K \textup{Span}_{\K} (\vartheta_1, \ldots, \vartheta_N) $. There exists a matrix $A \in M_{N-\delta,N}(\OK)$ of rank $N-\delta$ such that $A\Theta=0$. Let $n\geq 0$. Since $P_n := [ p_{i,n}^{(j)} ]_{1\leq i,j\leq N}$ is invertible, we may assume (up to a permutation of the indices $j$) that the matrix
$$B := \left( \begin{array}{ccc}
 p_{1,n}^{(1)}& \ldots & p_{N,n}^{(1)} \\ 
\vdots & & \vdots \\
 p_{1,n}^{(\delta)}& \ldots & p_{N,n}^{(\delta)} \\ 
 \\
& A & \\ \mbox{ } 
\end{array}\right) \in M_N(\OK)$$
is invertible. Then $N_{\K/\Q}(\det B)$ is a non-zero rational integer; here $N_{\K/\Q}(x)=\prod_{\sigma} \sigma(x)$ is the norm of $x\in\K$, and $\sigma$ ranges through the set of embeddings $\K\to\C$. Accordingly $|N_{\K/\Q}(\det B)|\geq 1$. 

Now this positive integer can be bounded from above as follows. Denoting by $L_i$ the $i$-th column of $B$ and considering the embedding $\sigma = \Id$ (recall that $\K$ is seen as a subset of $\C$ from the beginning), the first $\delta$ coefficients of $\sum_{i=1}^N \vartheta_i L_i$ have modulus less than $ Q_n^{-\tau+o(1)}$, while the last $N-\delta$ coefficients are zero. Since $A$ does not depend on $n$, this leads to the following inequality as $n\to\infty$ (applying trivial bounds for $\sigma\neq \Id$):
$$1 \leq |N_{\K/\Q}(\det B)| \leq Q_n^{-\tau+o(1)} Q_n^{\delta-1 +o(1)} \prod_{\sigma\neq\Id} Q_n^{\delta +o(1)}.$$
Since $Q_n\to+\infty$ this implies $\delta \geq \frac{\tau+1}{[\K:\Q]}$. If $\K\not\subset\R$ the non-trivial upper bound on $\sigma(\det B)$ can be used not only for $\sigma=\Id$, but also for the complex conjugation; this yields $\delta \geq \frac{2(\tau+1)}{[\K:\Q]}$ and concludes the proof of Theorem \ref{th:siegel}.
\end{proof}

\section{Analytic and arithmetic estimates} \label{sec:estimates}

This section is devoted to technical estimates that will be used in \S \ref{secfin} to conclude the proof of Theorem \ref{thintroun}. In \S \ref{subsec:ubJF} we obtain an upper bound for $\big\vert J_F^{(k-1)}(z)\big\vert$. The important point is that we {\em do not} assume $z$ to be in the disk of convergence of the local expansion of $F$ at 0; we use analytic continuation and an integral representation. In \S \ref{subsecautres} we estimate the denominators and the size of the coefficients of the linear forms.

\subsection{An upper bound for $\big\vert J_F^{(k-1)}(z)\big\vert$} \label{subsec:ubJF}

The $G$-function $F$ of Theorem \ref{thintroun} can be analytically continued 
%outside its disk of convergence $\vert z\vert < R$ as follows. Consider a singularity $\alpha\in \Sigma_F$ of $F(z)$; it is either a pole or a branch point. As in the introduction we define $\Deltapr_\alpha:=\alpha +e^{i\arg(\alpha)} \mathbb R^+$, the straight line ``from $\alpha$ to $\infty$'' whose direction goes through 0 but $0\notin \Deltapr_{\alpha}$. Then the function $F$ can be analytically continued 
to the domain $\mathcal{D}_F$, which is star-shaped at $0$, as explained in the introduction. Recall from \S \ref{subsecnotationspade} that 
$$
J_F(z)=n!^{s-r}\sum_{k=0}^\infty \frac{k(k-1)\cdots (k-rn+1)}{(k+1)^S(k+2)^S\cdots (k+n+1)^S} A_k z^{k+n+1}
$$
for $|z|<R$, where $R$ is the radius of convergence of the local expansion $\sum_{k=0}^\infty A_k z^k$ of $F(z)$ around 0. 
By Proposition 3 in \cite{fns}, for any $z$ such that $\vert z\vert <R$, we have
\begin{equation}\label{eq:tan1}
J_F(z)=\frac{z^{(r+1)n+1}}{n!^r} \int_{[0,1]^{S}} F^{(rn)}(zt_1\cdots t_S)\prod_{j=1}^S t_j^{rn}(1-t_j)^{n} dt_1\cdots d t_S.
\end{equation}
Now, using the continuation of $F$ to $\mathcal{D}_F$, we see from \eqref{eq:tan1} that $J_F$ can be analytically continued to $\mathcal{D}_F$ as well; indeed for $z\in \mathcal{D}_F$ and $t\in[0,1]$, we have $zt\in\mathcal{D}_F$. Observe that $\dpr\subsetneq \mathcal{D}_F$ because the definition of $\dpr$ involves a half-line starting at 0, and possibly half-lines starting at singularities of $L$ at which $F$ is holomorphic; in the previous sections, $J_F$ was analytically continued to $\dpr$ only. 

\bigskip

We now fix an integer $k \geq 1$, that will be fixed even as $n\to\infty$. By \eqref{eq:tan1}, we have
\begin{multline} \label{eq:tan11}
J_F^{(k-1)}(z)= \sum_{i=0}^{k-1} \binom{k-1}{i}((r+1)n+i-k+3)_{k-i-1} \\
\times \frac{z^{(r+1)n+i-k+2}}{n!^r}\int_{[0,1]^{S}} F^{(rn+i)}(zt_1\cdots t_S)\prod_{j=1}^S t_j^{rn+i}(1-t_j)^{n} dt_1\cdots d t_S.
\end{multline}

Let us fix $z\in \mathcal{D}_F$. We can find a simple smooth direct contour $\mathcal{C}_z \subset \mathcal{D}_F$ such that for any $t\in[0,1]$, the segment $[0, zt]$ is at positive distance inside $\mathcal{C}_z$. By Cauchy formula,
$$
F^{(rn+i)}(zt)=\frac{(rn+i)!}{2i\pi}\int_{\mathcal{C}_z} \frac{F(x)}{(x-zt)^{rn+i+1}} dx.
$$
Since the functions 
$$
g(z):=\max\Big(1,\max_{x\in \mathcal{C}_z, t\in [0,1]} \frac{1}{\vert x-zt\vert}\Big)
$$
and $$h(z):=\frac{1}{2\pi}\textup{length}(\mathcal{C}_z)\max_{x\in \mathcal{C}_z}\vert F(x)\vert$$ 
are well defined and finite for any $z\in \mathcal{D}_F$, 
%we then deduce the existence of a function $g(z) : \mathcal{D}\to \mathbb R^+$, finite for any $z\in \mathcal{D}$, such that for any $t\in [0,1]$, 
%\begin{equation}\label{eq:tan2}
%\vert F^{(rn)}(zt) \vert \le (rn)! g(z)^{rn+1}.
%\end{equation}
we thus deduce for any $t\in[0,1]$ and any $0\le i \le k-1$:
\begin{equation}\label{eq:tan2}
\vert F^{(rn+i)}(zt) \vert \le (rn+k-1)! h(z) g(z)^{rn+k}.
\end{equation}

We can now give an upper bound for $\big\vert J_F^{(k-1)}(z)\big\vert$.

\begin{Prop}\label{prop:tan3} For any integers $S \ge r \ge 0$ and $k\geq 1$, and any $z\in \mathcal{D}_F$, we have
\begin{equation} \label{eq:tan4}
\limsup_{n\to +\infty} \big\vert J_F^{(k-1)}(z)\big\vert^{1/n} 
\le \frac{\max(1,\vert z\vert)^{r+1} g(z)^r}{(r+1)^{S-r}}.
\end{equation}
\end{Prop}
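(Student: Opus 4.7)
The plan is to estimate each of the $k$ terms in the finite sum \eqref{eq:tan11} separately, since $k$ is fixed as $n\to\infty$; for each $i\in\{0,\ldots,k-1\}$ the prefactor $\binom{k-1}{i}((r+1)n+i-k+3)_{k-i-1}$ is polynomial in $n$ and so contributes $1$ to the $(1/n)$-th root. Since the sum has finitely many terms, the bound on the limsup of $|J_F^{(k-1)}(z)|^{1/n}$ is just the maximum over $i$ of the individual bounds.

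First I would insert the pointwise estimate \eqref{eq:tan2} into the integrand, reducing the $S$-fold integral to a product of one-dimensional Beta integrals:
$$\int_0^1 t^{rn+i}(1-t)^n\,dt = \frac{(rn+i)!\,n!}{((r+1)n+i+1)!}.$$
Together with the elementary bound $|z|^{(r+1)n+i-k+2}\le \max(1,|z|)^{(r+1)n+k}$ and the factor $g(z)^{rn+k}$, this yields, for each $i$,
$$|J_F^{(k-1)}(z)| \le p_n(z)\cdot \max(1,|z|)^{(r+1)n+k}\cdot g(z)^{rn+k}\cdot \frac{(rn+k-1)!}{n!^r}\cdot \left(\frac{(rn+i)!\,n!}{((r+1)n+i+1)!}\right)^S,$$
where $p_n(z)$ depends only on $z$ and $k$ and grows at most polynomially in $n$.

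Next, applying Stirling's formula to the two ratios of factorials (using the convention $0^0=1$ to cover $r=0$) gives
$$\Big(\frac{(rn+k-1)!}{n!^r}\Big)^{1/n} \longrightarrow r^r \quad\text{and}\quad \Big(\frac{(rn+i)!\,n!}{((r+1)n+i+1)!}\Big)^{1/n} \longrightarrow \frac{r^r}{(r+1)^{r+1}}.$$
Multiplying all the factors and taking the $(1/n)$-th root yields
$$\limsup_{n\to\infty}|J_F^{(k-1)}(z)|^{1/n} \le \max(1,|z|)^{r+1}\,g(z)^r\cdot r^r\cdot \Big(\frac{r^r}{(r+1)^{r+1}}\Big)^S = \max(1,|z|)^{r+1}\,g(z)^r\cdot \frac{r^{r(S+1)}}{(r+1)^{(r+1)S}}.$$

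Finally, the trivial inequality $r^{r(S+1)}\le (r+1)^{r(S+1)}$ gives
$$\frac{r^{r(S+1)}}{(r+1)^{(r+1)S}} \le \frac{(r+1)^{r(S+1)}}{(r+1)^{(r+1)S}} = \frac{1}{(r+1)^{(r+1)S-r(S+1)}} = \frac{1}{(r+1)^{S-r}},$$
which is exactly the bound claimed in \eqref{eq:tan4}. The main obstacle is purely bookkeeping: tracking the polynomial-in-$n$ prefactors carefully so that they really disappear upon taking the $(1/n)$-th root, and checking that the degenerate case $r=0$ is consistent. There is no conceptual difficulty, since the integral representation \eqref{eq:tan1} (valid on all of $\mathcal{D}_F$ by analytic continuation) has already reduced the problem to the asymptotic evaluation of standard ratios of factorials.
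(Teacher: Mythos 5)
Your proposal is correct and follows essentially the same route as the paper: plug the Cauchy-formula bound \eqref{eq:tan2} into \eqref{eq:tan11}, evaluate the Beta integrals, apply Stirling to get the rate $\max(1,|z|)^{r+1}g(z)^r\,r^{r(S+1)}/(r+1)^{(r+1)S}$, and finish with $r^{r(S+1)}\le (r+1)^{r(S+1)}$. The only (immaterial) differences are that you treat the $k$ terms of the sum individually and keep the exact Beta integral with exponent $rn+i$, whereas the paper bounds $t_j^{rn+i}\le t_j^{rn}$ and absorbs the sum into a polynomial factor $k2^{k-1}((r+1)n+2)^{k-1}$.
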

\begin{proof} In the end we shall make $n\to +\infty$ while keeping the other parameters fixed. We can thus assume that $n\ge k-1$ without loss of generality, so that $0\le (r+1)n+i-k+2\le (r+1)n+1$ for $0\le i\le k-1$. 
We set $\widetilde{z}$ for $\max(1, \vert z\vert)$.
We use \eqref{eq:tan2} in \eqref{eq:tan11} with $t=t_1t_2\cdots t_S$ and get 
\begin{align*}
\vert J_F^{(k-1)}(z) \vert
& \le 
\frac{(rn+k-1)!h(z) g(z)^{rn+k} \widetilde{z}^{(r+1)n+1}}{n!^r}
\\
& \qquad \times \sum_{i=0}^{k-1} 
\binom{k-1}{i}((r+1)n+i-k+3)_{k-i-1} 
 \int_{[0,1]^{S}} \prod_{j=1}^S t_j^{rn+i}(1-t_j)^{n} dt_1\cdots d t_S
\\
& \le k 2^{k-1} ((r+1)n+2)^{k-1} \frac{(rn+k-1)!h(z) g(z)^{rn+k} \widetilde{z}^{(r+1)n+1} }{n!^r}\left(\int_0^1 t^{rn}(1-t)^n dt \right)^{S}
\\
& = k 2^{k-1} ((r+1)n+2)^{k-1} h(z) g(z)^{rn+k} \widetilde{z}^{(r+1)n+1} \cdot \frac{(rn+k-1)!}{n!^r}
\cdot \left(\frac{n!(rn)!}{((r+1)n+1)!}\right)^S.
\end{align*}
Now by Stirling's formula (see \cite{JMPA} for a similar computation), we readily obtain 
$$
\limsup_{n\to +\infty} \big\vert J_F^{(k-1)}(z)\big\vert^{1/n} 
\le \widetilde{z}^{r+1} g(z)^r \frac{r^{r(S+1)}}{(r+1)^{S(r+1)}}
\le \frac{\widetilde{z}^{r+1} g(z)^r}{(r+1)^{S-r}},
$$
as expected.
\end{proof}

\subsection{Denominators and size of the coefficients} \label{subsecautres}

In this section we prove the last estimates to be used in the proof of Theorem \ref{thintroun}, namely those on the denominators and the size of the coefficients of the linear forms. As in \S \ref{sec:critere} we denote by $\OK$ the ring of integers of $\K$; we consider the rational functions $ \bfP_{k,i}\in\K(z)$ (see the beginning of \S \ref{secapplishid}). Recall that $ \bfP_{k,i}$ depends also on $n$, and that the set $\Sigma$ of finite singularities of $L$ contains all poles of the $ \bfP_{k,i}$.

\begin{lem}\label{lemarith}
 Let $z_0\in\K\setminus\Sigma$ and $v\in\N\etoile$ be such that $vz\in\OK$; let $K\geq 1$. Then there exists a sequence $(\delta_{n,K})_{n\geq 1}$ of positive rational integers 
 such that for any $i\in I$ and any $k\in \{1,\ldots,K\}$: 
$$
\delta_{n,K} \bfP_{k,i}(z_0) \in \OK \quad \textup{for any $n$, and} \quad 
\lim_{n\to +\infty} \delta_{n,K}^{1/n} = v C_2^S e^{S}.
$$
\end{lem}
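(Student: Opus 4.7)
The proof plan is to combine the explicit formulas \eqref{eq:101}--\eqref{eq:102} for the polynomials $P_{u,s,n}$ and $\widetilde{P}_{u,n}$ (from Lemma~\ref{lem600}) with the arithmetic bounds of Proposition~\ref{prop:1}$(iii)$ and with a denominator estimate for the partial fraction coefficients $c_{j,\sigma,n}$ of \eqref{eq:100}. These three ingredients will contribute the factors $C_2^S$, $e^S$ and $v$ in the target limit $\lim_n \delta_{n,K}^{1/n} = vC_2^Se^S$.

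First I would treat the case $k=1$. Here $\bfP_{1,i}=\bfP_i$ equals either $P_{u,s,n}$ or $\widetilde P_{u,n}$, and is a polynomial in $z$ of degree at most $N_n:=n+1+S(\ell-1)$. By \eqref{eq:101}--\eqref{eq:102}, every coefficient of $\bfP_i(z)$ is a $\Z$-linear combination of partial fraction coefficients $c_{j,\sigma,n}$ multiplied by the numbers $\ppP_{u,s,\sigma,j}$ or by coefficients of $\qqQ_{u,\sigma,j}(z)$ from Proposition~\ref{prop:1}. By part~$(iii)$ of that proposition, a single positive integer $D(S,n)$ clears the denominators of all these $\ppP$'s and $\qqQ$-coefficients, with $\limsup_n D(S,n)^{1/n}\le C_2^S$. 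On the other hand, a direct residue computation in \eqref{eq:100} produces a positive integer $\Delta_n$ with $\Delta_n c_{j,\sigma,n}\in\Z$ for all $1\le j\le n+1$ and $1\le\sigma\le S$, and $\lim_n\Delta_n^{1/n}=e^S$. Hence every coefficient of $\bfP_i(z)$ lies in $\frac{1}{D(S,n)\Delta_n}\OK$. Since $vz_0\in\OK$ we have $v^{N_n}z_0^j\in\OK$ for every $0\le j\le N_n$, so that $D(S,n)\Delta_n v^{N_n}\bfP_i(z_0)\in\OK$ for all $i\in I$ and all $n$, with exponential rate $vC_2^Se^S$.

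For $k\ge 2$ I would invoke Eq.~\eqref{eqdefpki}: $(\bfP_{k,i})_{i\in I}$ is obtained from $(\bfP_i)_{i\in I}$ by $k-1$ applications of the operator $\frac{\dd}{\dd z}+\tra A$. Since the entries of $A\in M_q(\K(z))$ have poles only at points of $\Sigma$ and $z_0\notin\Sigma$, one can fix a polynomial $Q\in\OK[z]$, with $Q(z_0)\neq 0$, that clears all denominators of $A$. Each application of $\frac{\dd}{\dd z}+\tra A$ multiplies the common denominator of the current rational vector by at most one extra power of $Q$; after $k-1\le K-1$ steps and evaluation at $z_0$, this contributes only a multiplicative factor whose denominator is bounded by a positive integer $\gamma_K$ depending on $L$, $K$, $z_0$ and $\K$ but not on $n$. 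Setting
$$\delta_{n,K} := \gamma_K\,D(S,n)\,\Delta_n\,v^{N_n},$$
I obtain $\delta_{n,K}\bfP_{k,i}(z_0)\in\OK$ for every $i\in I$, every $k\in\{1,\ldots,K\}$ and every $n\ge 1$. Since $\gamma_K$ is a constant and $N_n=n+O(1)$, taking $n$th roots yields $\lim_n\delta_{n,K}^{1/n}=vC_2^Se^S$.

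The hard part is the sharp denominator estimate $\Delta_n c_{j,\sigma,n}\in\Z$ with $\lim_n\Delta_n^{1/n}=e^S$. A naive application of Leibniz' rule to $\psi_j(k):=(k+j)^S A(k)$, where $A(k)=n!^{S-r}(k-rn+1)_{rn}/(k+1)_{n+1}^S$, around $k=-j$ produces reciprocals $1/m$ for integers $m$ up to $(r+1)n$, which would only give the weaker rate $e^{(r+1)(S-1)}$. To reduce this to a power of $d_{n+1}$ one must exploit the integrality of the top-order residue $c_{j,S,n}$ (which follows from the integrality of the multinomial coefficient $(rn)!/n!^r$ and of $\binom{n}{j-1}^S$) together with a telescoping argument on the logarithmic derivatives of $\psi_j$ at $-j$ that keeps the occurring denominators within $d_{n+1}$.
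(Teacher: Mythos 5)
Your plan follows essentially the same route as the paper's proof: clear the denominators of the $\ppP_{u,t,s,j}$ and of the coefficients of the $\qqQ_{j,s,n}$ via Proposition~\ref{prop:1}$(iii)$ (rate $C_2^S$), clear the denominators of the partial-fraction coefficients $c_{j,s,n}$ of \eqref{eq:100} by a power of $d_n=\textup{lcm}\{1,\ldots,n\}$ (rate $e^S$), use $vz_0\in\OK$ to absorb the powers of $z_0$ up to degree $n+1+S(\ell-1)+O(1)$ (rate $v$), and, for $2\le k\le K$, clear the poles of $A$ by a fixed $T\in\OK[z]$ with $T(z_0)\neq 0$ (possible since $z_0\notin\Sigma$), whose contribution is $n$-independent because $K$ is fixed. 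This is exactly the paper's argument, with $D(S,n+1)$ rather than $D(S,n)$ (the indices $j$ in \eqref{eq:101}--\eqref{eq:102} run up to $n+1$), a harmless change.

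The one genuine weak spot is the step you call ``the hard part''. The bound $d_n^{S}c_{j,s,n}\in\Z$ is not reproved in the paper: it is quoted from~\cite[Lemme 5]{ribordeaux}, as was already done in~\cite{fns}, so you could simply cite it. Your proposed derivation, however, is doubtful as sketched: logarithmic differentiation of $\psi_j(k)=(k+j)^SA(k)$ is precisely where the reciprocals of integers of size up to $(r+1)n$ enter (the numerator $(k-rn+1)_{rn}$ contributes terms $1/(k+j)$ evaluated near the roots $0,\ldots,rn-1$), and integrality of the top residue $c_{j,S,n}$ does not propagate to the lower-order coefficients by ``telescoping''. The standard proof instead factors $(k+j)^SA(k)=\prod_{a=1}^r\frac{(k-an+1)_n}{n!}\cdot\bigl(\frac{n!\,(k+j)}{(k+1)_{n+1}}\bigr)^S$ and shows that for each of these $r+S$ factors, $d_n^{\lambda}$ times its $\lambda$-th Taylor coefficient at $k=-j$ is a rational integer (for the binomial-type factors this is the classical lemma on integer-valued polynomials; for the others it follows from the integral partial fractions of $\frac{n!}{(k+1)_{n+1}}$), after which Leibniz gives $d_n^{S-s}c_{j,s,n}\in\Z$. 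Two further small points are needed to match the statement exactly: Proposition~\ref{prop:1}$(iii)$ only gives a $\limsup$, so to obtain the asserted limit $vC_2^Se^S$ one should enlarge $D(S,n)$ (the paper forces $D(S,n)\ge C_2^{Sn}/2$); and your constant $\gamma_K$ must be a rational integer, which requires the norm trick the paper uses, namely multiplying by $N_{\K/\Q}\bigl(v^{K\deg T}T(z_0)^K\bigr)$ and using that $N_{\K/\Q}(x)/x\in\OK$ for $x\in\OK\setminus\{0\}$ — your assertion that the denominator of $T(z_0)^{-(K-1)}$ ``is bounded by a positive integer'' is exactly this fact and should be justified.
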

\begin{proof} Let $d_n=\textup{lcm}\{1,2,\ldots,n\}$. As in \cite{fns}, the proof of~\cite[Lemme 5]{ribordeaux} shows that $d_n^S c_{j,s,n}\in \mathbb Z$ for all $j,s,n$; recall that $\lim_n d_n^{1/n}=e$. Upon multiplying $D(S,n)$ with a suitable positive integer, we may assume in 
Proposition~\ref{prop:1} $(iii)$ that $D(S,n)\geq C_2^{Sn}/2$, so that $\lim_n D(S,n)^{1/n} = C_2^{S}$. 
Moreover Proposition~\ref{prop:1} and Eqns. \eqref{eq:101} and \eqref{eq:102} yield
 $ d_n^S D(S,n+1) \bfP_i \in \OK[z]$ for any $i \in I$. Now let $T\in\OK[z]$ be such that $TA\in M_q(\OK[z])$, where $A\in M_q(\K(z))$ is the matrix of the differential system (see \S \ref{secapplishid}); we may assume that all roots of $T$ are poles of coefficients of $A$, so that $T(z_0)\neq 0$ since $z_0 \notin \Sigma$. Then Eq. \eqref{eqdefpki} yields $d_n^S D(S,n+1) T(z)^k \bfP_{k,i}(z) \in \OK[z]$ for any $i\in I$, by induction on $k\geq 1$. Since $\deg (T^k \bfP_{k,i}) \leq k \deg T + n+1+S(\ell-1)$ and $k\leq K$, we obtain $\delta'_{n,K} P_{k,i}(z_0) \in \OK$ by letting 
 $$
 \delta'_{n,K} = v^{ K \deg T + n+1+S(\ell-1)} d_n^S D(S,n+1) T(z_0)^K \in \OK.
$$

Now let $N_{\K/\Q}$ denote the norm relative to the extension $\K/\Q$, as in the proof of Theorem \ref{th:siegel} (see \S \ref{sec:critere}). Since $v^{K\deg T} T(z_0)^K \in \OK\setminus\{0\}$ we have $N_{\K/\Q} ( v^{K\deg T} T(z_0)^K) \in\N\etoile$ and $\frac{ N_{\K/\Q} ( v^{K\deg T} T(z_0)^K) }{ v^{K\deg T} T(z_0)^K} \in \OK$. Therefore letting 
$$
 \delta_{n,K} = v^{ n+1+S(\ell-1)} d_n^S D(S,n+1) \, N_{\K/\Q} ( v^{K\deg T} T(z_0)^K) \in \N\etoile 
$$
concludes the proof of Lemma \ref{lemarith}.
\end{proof}

Given $\xi\in\Qbar$, recall from \S \ref{sec:critere} that $\house{\xi}$ is the house of $\xi$, i.e. the maximum modulus of the Galois conjugates of $\xi$.

\begin{lem}\label{lemasy} Let $z_0\in\K\setminus\Sigma$ and $K\geq 1$. Then we have for any $i\in I$:
$$
\limsup_{n\to +\infty} \big(\max_{1\leq k \leq K }\house{ \bfP_{k,i}(z_0)}\big)^{1/n}\le C_1^Sr^r2^{S+r+1}\max(1, \house{z_0}).
$$
\end{lem}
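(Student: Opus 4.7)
The plan is to split the argument into two parts: first obtain the asserted bound for $k=1$ by bounding the houses of the coefficients of the polynomials $\bfP_i = \bfP_{1,i}$, and then extend to $k \in \{1,\ldots,K\}$ by iterating the recursion \eqref{eqdefpki}.

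For $k=1$, I would start from the explicit expressions \eqref{eq:101}--\eqref{eq:102}: each coefficient of $\bfP_i$ is a sum of $O(n^2)$ products of the form $c_{j,\sigma,n}\,\xi$, where $c_{j,\sigma,n} \in \Q$ is a partial fraction coefficient from \eqref{eq:100} and $\xi$ is either some $\ppP_{u,s,\sigma,j}$ or a coefficient of the polynomial $\qqQ_{u,s,j}$. Proposition~\ref{prop:1}(ii) gives $\house{\xi} \le H(S,n)$ with $\limsup H(S,n)^{1/n} \le C_1^S$. For the rational numbers $c_{j,\sigma,n}$, applying Cauchy's formula to $(k+j)^{\sigma-1} A(k)$ on the circle $|k+j| = \tfrac{1}{2}$ (which isolates the pole at $-j$ from the other poles of $A(k) = n!^{S-r}(k-rn+1)_{rn}/(k+1)^S_{n+1}$), combined with Stirling estimates, should yield $\limsup_n \max_{j,\sigma} |c_{j,\sigma,n}|^{1/n} \le r^r 2^{S+r}$. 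Since $\deg \bfP_i \le n + 1 + S(\ell-1)$, the triangle inequality applied to $\bfP_i(z_0)$ produces a factor $\max(1,\house{z_0})^{n+O(1)}$, leading to
\[\limsup_n \house{\bfP_{1,i}(z_0)}^{1/n} \le C_1^S \, r^r \, 2^{S+r} \, \max(1,\house{z_0}).\]

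To pass from $k=1$ to general $k\in\{1,\ldots,K\}$, I would iterate \eqref{eqdefpki}, i.e. $\bfP_{k+1,i} = \bfP_{k,i}' + \sum_{j=1}^q A_{j,i}\,\bfP_{k,j}$. Choose $T \in \OK[z]$ such that $TA \in M_q(\OK[z])$ and whose roots all lie in $\Sigma$, ensuring $T(z_0) \ne 0$. Setting $Q_{k,i} := T^{k-1}\bfP_{k,i}$, one checks by induction that $Q_{k,i} \in \K[z]$ has degree $\le n + O_K(1)$ and satisfies
\[Q_{k+1,i} = T \, Q_{k,i}' - (k-1)\,T'\,Q_{k,i} + \sum_{j=1}^q (TA_{j,i})\,Q_{k,j}.\]
Each recursion step multiplies the maximal house of the coefficients by at most a polynomial factor in $n$ (from the derivative) times an $n$-independent constant (from the houses of the coefficients of $T$ and $TA_{j,i}$). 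Since $K$ is fixed, after $K-1$ iterations the maximal house of the coefficients of $Q_{K,i}$ (and hence $\house{Q_{K,i}(z_0)}$) differs from the $k=1$ bound by at most a polynomial-in-$n$ factor times a constant depending only on $K$, $T$, $z_0$. Dividing by $T(z_0)^{K-1}$ and taking $n$-th roots and lim sup absorbs all these factors, leaving the rate $C_1^S r^r 2^{S+r}\max(1,\house{z_0})$; the extra factor of $2$ in $2^{S+r+1}$ provides comfortable slack.

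The main technical obstacle will be the partial-fraction estimate $\limsup_n \max_{j,\sigma} |c_{j,\sigma,n}|^{1/n} \le r^r 2^{S+r}$. Since $c_{j,\sigma,n}$ is, up to a factor, the $(S-\sigma)$-th derivative of $(k+j)^S A(k)$ at $k = -j$, and the value at $k = -j$ itself equals $\pm\, n!^{S-r}(j+rn-1)!/((j-1)!^{S+1}(n-j+1)!^S)$, the uniform growth rate across all $j \in \{1,\ldots,n+1\}$ and all $\sigma \in \{1,\ldots,S\}$ must be extracted by a careful Stirling analysis (the critical regimes being $j=1$, $j=n+1$, and intermediate $j/n = \alpha \in (0,1)$); this type of estimate is in the spirit of analogous computations in \cite{fns} and \cite{JMPA}.
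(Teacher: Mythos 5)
Your overall architecture is the same as the paper's: bound the houses of the coefficients of the $\bfP_i$ via the explicit formulas \eqref{eq:101}--\eqref{eq:102}, Proposition~\ref{prop:1}$(ii)$ and a bound on the partial-fraction coefficients $c_{j,s,n}$ of \eqref{eq:100}, then pass to $k\leq K$ by multiplying by a power of a polynomial $T$ with $TA\in M_q(\OK[z])$ and iterating \eqref{eqdefpki} (the paper uses $\Pi_{k,i}=T^k\bfP_{k,i}$, you use $T^{k-1}\bfP_{k,i}$; this part of your plan is correct and, as you say, costs only factors polynomial in $n$ for fixed $K$). The genuine gap is in the key quantitative claim: the estimate $\limsup_n\max_{j,\sigma}|c_{j,\sigma,n}|^{1/n}\leq r^r2^{S+r}$ is false in general, so no Cauchy/Stirling argument can prove it. Already for $r=S=1$ one has the exact value $|c_{j,1,n}|=\binom{n+j-1}{j-1}\binom{n}{j-1}$, whose maximum over $j$ is attained near $j-1\approx n/\sqrt{2}$ (not at $j=n+1$) and grows like $c^n$ with $c=\max_{0\le x\le1}\exp\bigl((1+x)\log(1+x)-2x\log x-(1-x)\log(1-x)\bigr)\approx 5.8$, which exceeds $r^r2^{S+r}=4$; the correct uniform bound is the one the paper imports from \cite[Lemme 4]{ribordeaux}, namely $|c_{j,s,n}|\leq (rn+1)2^S\,(r^r2^{S+r+1})^n$. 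Your ``comfortable slack'' is also misallocated: the extra factor $2$ in the constant $C_1^Sr^r2^{S+r+1}$ of the lemma is needed precisely to accommodate the true size of the $c_{j,s,n}$, not the passage from $k=1$ to $k\leq K$, which (as your own recursion shows) contributes nothing to the limsup of the $n$-th roots.

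The repair is straightforward and brings you back to the paper's proof: replace your claimed estimate by the bound of \cite[Lemme 4]{ribordeaux} (or reprove a bound of that strength), plug it into \eqref{eq:101}--\eqref{eq:102} together with Proposition~\ref{prop:1}$(ii)$ to get $H(\bfP_i)\leq H_n$ with $\lim H_n^{1/n}=C_1^Sr^r2^{S+r+1}$, and then run your $T$-iteration and evaluation at $z_0$ exactly as outlined; this yields the stated constant with no slack needed. If you do want to derive the $c_{j,s,n}$ bound yourself, be aware that your intended critical regimes ($j=1$, $j=n+1$) are not where the maximum occurs: the intermediate regime $j\asymp xn$ dominates, and any correct analysis must maximize over $x\in(0,1)$ and verify the resulting rate against $r^r2^{S+r+1}$.
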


\begin{Remark} The upper bounds in Lemmas \ref{lemarith} and \ref{lemasy} do not depend on $K$; they are the same as in the corresponding lemmas in \cite{fns}. Actually the important point in our application is that $K$ will be independent from $n$.
\end{Remark}

\begin{proof}
Given $P \in \K[z]$, we denote by $H(P)$ the maximum of the houses of its coefficients.
 In~\cite[Lemma 4]{ribordeaux}, it is proved that the coefficients $c_{j,s,n}$ in \eqref{eq:100} satisfy
$$
\vert c_{j,s,n}\vert \le (rn+1)2^S (r^r 2^{S+r+1})^n
$$
for all $j,s,n$. 
(Our $c_{j,s,n}$ are noted $c_{s,j-1,n}$ in \cite{ribordeaux}). 
Using this bound in \eqref{eq:101} and 
\eqref{eq:102} together with Proposition~\ref{prop:1}$(ii)$ we obtain $H( \bfP_i)\leq H_n$ for any $i$, with $\lim H_n^{1/n} = C_1^Sr^r2^{S+r+1}$. Now choose $T$ as in the proof of Lemma \ref{lemarith}, and let $\Pi_{k,i} = T^k \bfP_{k,i} \in \K[z]$ for any $k,i$. Then Eq. \eqref{eqdefpki} yields $\deg \Pi_{k,i} \leq c_A k+n$ by induction on $k$, and then $H(\Pi_{k+1,i})\leq (c'_Ak+n) H(\Pi_{k,i} )$, where $c_A$ and $c'_A$ depend only on $A$. For $k\leq K$ we deduce $\house{\Pi_{k,i} (z_0)}\leq (c_AK+n)H(\Pi_{k,i} ) \max(1, \house{z_0})^{c_AK+n}$ with $ H(\Pi_{k,i} ) \leq (c'_AK+n)^K H_n$. This enables us to conclude the proof of Lemma \ref{lemasy}.
\end{proof}

\section{Proof of Theorem \ref{thintroun}} \label{secfin}

In this section we prove Theorem \ref{thintroun} by combining the results obtained in previous sections.

\begin{proof}
Let $F(z) = \sum_{k=0}^\infty A_k z^k $ be a $G$-function, with $A_k \in\K$ and $F(z)\notin\cz$. As in the introduction we denote by 
$\Sigma_F$ the set of finite singularities of $F$
and for $\alpha\in \Sigma_F$ we define $\Deltapr_\alpha:=\alpha +e^{i\arg(\alpha)} \mathbb R^+$; we let 
 $\mathcal{D}_F:=\mathbb C \setminus (\cup_{\alpha \in \Sigma_F} \Deltapr_\alpha)$. Let $L_F$ and $z_0$ be as in Theorem \ref{thintroun}.

\bigskip

As in \S \ref{subsecnotationspade} we consider the $G$-operator $L$ obtained from $L_F$ by Lemma \ref{lemopdiff}, and define $\mu$, $\Sigma$, $\ell$ and $\dpr$ in terms of $L$. In defining $\dpr $ we choose a half-line $\Deltapr_0$ such that $z_0 \notin\Deltapr_0$, so that $z_0\in\dpr$. We also consider $\capazero$, $m$ and $\lun$ and in \S \ref{subseclienfns}, and integer parameters $S\geq r \geq 1$ with $S$ large enough in terms of $L$.

As in \S \ref{secapplishid} we let $$I = \Big( \{1,\ldots,\lun\}\times \{1,\ldots, S\} \Big) \, \, \sqcup \, \, \{0,\ldots,\mu-1\} $$
and $q = \Card \, I = \lun S+\mu $. Elements of $I$ are denoted by $(u,s)$ (with $1\leq u \leq \lun$ and $1\leq s \leq S$) or $u$ (with $0\leq u \leq \mu-1$). For any $n$ sufficiently large, Lemma \ref{lem600} provides a family $(\bfP_i)_{i\in I}$ of polynomials indexed by $I$, namely $\bfP_{u,s} = P_{u,s,n}$ and $\bfP_u = \Pti_{u,n}$; here the integer $n$ is omitted in the notation. 

\bigskip

For any $k\geq 1$ we are interested in the following linear form:
$$
J_F^{(k-1)}(z_0)=\sum_{u=1}^{\lun}\sum_{s=1}^S \bfP_{k,u,s}(z_0) F_{u}^{[s]}(z_0)+ \sum_{u=0}^{\mu-1} \bfP_{k,u}(z_0) (\theta^uF)(z_0) 
$$
obtained by taking the $(k-1)$-th derivative of the formula given by Lemma \ref{lem600}; here the rational functions $ \bfP_{k,i}$, for $i\in I$, are given by Eq. \eqref{eqdefpki} (see the beginning of \S \ref{secapplishid}). This formula can be written in a more compact way:
$$
J_F^{(k-1)}(z_0)=\sum_{i\in I} \bfP_{k,i}(z_0) y_i(z_0) 
$$
by letting $y_{u,s} = F_{u}^{[s]}$ and $y_u = \theta^uF$. Now Proposition \ref{propshid} provides elements $i_1$, \ldots, $i_\roro$ of $I$ and coefficients $\lambda_{i,t}\in\K$; assertion $(i)$ of this Proposition yields
\begin{equation} \label{eqJFun}
J_F^{(k-1)}(z_0)=\sum_{i\in I \setminus \{i_1,\ldots,i_\roro\}} \bfP_{k,i}(z_0) \Big( y_i(z_0) + \sum_{t=1}^\roro \lambda_{i,t} y_{i_t}(z_0) \Big).
\end{equation}

\bigskip

Let $N = q-\roro = \Card (I \setminus \{i_1,\ldots,i_\roro\})$, and choose a bijective map $\psi : I \setminus \{i_1,\ldots,i_\roro\} \to \{1,\ldots,N\}$. For any $i\in I \setminus \{i_1,\ldots,i_\roro\}$ and any $j\in \{1,\ldots,N\}$, let 
$$\vartheta_{\psi(i)} = y_i(z_0) + \sum_{t=1}^\roro \lambda_{i,t} y_{i_t}(z_0) 
\mbox{ and }
p_{\psi(i),n}^{(j)} = \delta_{n,K} \bfP_{k_j,i}(z_0) $$
where $k_1$, \ldots, $k_N$ are the integers provided by Proposition \ref{propshid} $(ii)$, $K$ is an upper bound on them, and $\delta_{n,K} $ is defined by Lemma \ref{lemarith}. The important point, here and below, is that $K$ depends only on $L$, $r$, $S$, but not on $n$ (eventhough the integers $k_1$, \ldots, $k_N$ depend on $n$). Then Eq. \eqref{eqJFun} yields
$$
J_F^{(k_j-1)}(z_0)=\sum_{i=1}^N p_{i,n}^{(j)} \, \vartheta_i \mbox{ for any } j\in \{1,\ldots,N\}.$$
Lemma \ref{lemarith} shows that all coefficients $p_{i,n}^{(j)} $ belong to $\OK$; Lemma \ref{lemasy} provides an upper bound on the moduli of their Galois conjugates. At last, Proposition \ref{propshid} asserts that the matrix $[ p_{i,n}^{(j)} ]_{1\leq i,j \leq N} $ is invertible for any $n$ sufficiently large. Therefore Siegel's linear independence criterion (namely Theorem \ref{th:siegel}) applied with $Q_n = b^n$ yields
\begin{equation} \label{eqminodim}
\dim_\K \textup{Span}_{\K} (\vartheta_1, \ldots, \vartheta_N) \geq \frac{\tau+1}{[\K:\Q]} 
\end{equation}
where 
$$\tau := \frac{- \log(a_0)}{\log(b)}, \quad
a_0 := v C_2^S e^{S} \frac{\max(1,\vert z\vert)^{r+1} g(z)^r}{(r+1)^{S-r}}, \quad
b:=v C_1^SC_2^Se^S r^r2^{S+r+1}\max(1, \house{z_0})$$
using Proposition \ref{prop:tan3} and Lemmas \ref{lemarith} and \ref{lemasy} (of which we keep the notation). 
Now for any $i$ the number $\vartheta_i$ belongs to the $\K$-vector space spanned by the numbers $F_{u}^{[s]}(z_0) $ and $(\theta^uF)(z_0) $, so that the lower bound \eqref{eqminodim} holds also with the dimension of this vector space in the left handside. We obtain therefore
\begin{equation} \label{eqminodimde}
\dim_\K \textup{Span}_{\K} \{ F_{u}^{[s]}(z_0) , \, 1\leq u \leq \lun, \, 1\leq s \leq S\} \geq \frac{\tau+1}{[\K:\Q]} - \mu.
\end{equation}
Taking for $r$ the integer part of $S/ \log(S)^2$, and letting $S$ tend to infinity, we deduce Theorem \ref{thintroun} with $C(F) = \log(2eC_1C_2)$. Observe that $C(F)$ depends only on $L$, and that this part of the computation is exactly the same as in \cite[\S 6.4]{fns}: $b$ is the same, and even though $a_0$ is slightly different the main term as $S\to\infty$ (with $r = \lfloor S/ \log (S)^2\rfloor$) is the same.
This concludes the proof of Theorem \ref{thintroun}.
\end{proof}

\bigskip

\noindent S. Fischler, 
Laboratoire de Math\'ematiques d'Orsay, Univ. Paris-Sud, CNRS, Universit\'e Paris-Saclay, 91405 Orsay, France.

\medskip

\noindent T. Rivoal, Institut Fourier, CNRS et Universit\'e Grenoble Alpes, CS 40700, 
 38058 Grenoble cedex 9, France

\bigskip

\noindent {\em Key words and phrases.} $G$-functions, $G$-operators, Pad\'e approximation, Siegel's linear independence criterion, Shidlovsky's lemma.

\medskip

\noindent {\em 2010 Mathematics Subject Classification.} Primary 11J72, 11J92, Secondary 34M35, 41A60.


\begin{thebibliography}{10}
\bibitem{ar} B. Adamczewski and T. Rivoal, {\em Exceptional values of E-functions at algebraic points}, Bull. London Math. Soc. {\bf 50}.4 (2018), 697--908.


\bibitem{andre}
Y. Andr\'e, {\em S\'eries {G}evrey de type arithm\'etique {I}.
 {T}h\'eor\`emes de puret\'e et de dualit\'e}, Annals of Math. {\bf 151} (2000), 705--740.


\bibitem{BR} K. M. Ball and T. Rivoal, {\em Irrationalit\'e 
d'une infinit\'e de valeurs de la fonction z\^eta aux entiers impairs}, Invent. Math. {\bf 146}.1 (2001), 193--207.


\bibitem{DBShid} D.~Bertrand, {\em Le th\'eor\`eme de {S}iegel-{S}hidlovsky
 revisit\'e}, in: Number theory, Analysis and Geometry: in memory of
 Serge Lang (D. Goldfeld {\em et al.}, eds), Springer, 2012,
 51--67.
\bibitem{BB}
 D.~Bertrand and F.~Beukers, {\em 
 \'{E}quations diff\'erentielles lin\'eaires et majorations de
 multiplicit\'es}, Ann. Sci. \'Ecole Norm. Sup. {\bf 18}.1
 (1985), 181--192.
 \bibitem{BeukersAperyPade} F.~Beukers, {\em Pad\'e-approximations in number theory}, in: Pad\'e Approximation and its Applications,
Amsterdam, 1980, Lecture Notes in Math., vol. 888, Springer-Verlag, 1981, 90--99. 
 \bibitem{EMS}
N.I. Fel'dman and Yu.V. Nesterenko, {\em Number Theory {IV}, Transcendental
 Numbers}, Encyclopaedia of Mathematical Sciences, vol. 44, A.N. Parshin
 and I.R. Shafarevich, eds., Springer, 1998.

 \bibitem{JMPA} S. Fischler and T. Rivoal, {\em Approximants de Pad\'e et s\'eries hyperg\'eom\'etriques \'equilibr\'ees},
 J. Math. Pures et Appliqu\'ees {\bf 82} (2003), 1369--1394. 
\bibitem{gvalues} S. Fischler and T. Rivoal, {\em On the values of $G$-functions}, 
Commentarii Math. Helv. {\bf 29}.2 (2014), 313--341. 
\bibitem{Pise} S. Fischler and T. Rivoal, {\em Multiple zeta values, Pad\'e approximation and Vasilyev's conjecture}, Annali Scuola Norm. Sup. Pisa {\bf 15} (2016), 1--24. 
\bibitem{ateo} S. Fischler and T. Rivoal, {\em Arithmetic theory of $E$-operators}, J. \'Ecole polytechnique - Math\'ematiques {\bf 3} (2016), 31--65.
\bibitem{fuchsien} S. Fischler and T. Rivoal, {\em Microsolutions of differential operators and values of arithmetic Gevrey series}, American J. of Math. {\bf 140}.2 (2018), 317--348. 
\bibitem{fns} S. Fischler and T. Rivoal, {\em Linear independence of values of G-functions}, preprint arxiv 1701.09051 [math.NT], 46 pages, J. Europ. Math. Soc., to appear.
\bibitem{SFcaract} S. Fischler, {\em Shidlovsky's multiplicity estimate and irrationality of zeta values}, J. Austral. Math. Soc. {\bf 105}.2 (2018), 145--172.
\bibitem{Bible} P. Flajolet and R. Sedgewick, {\em Analytic combinatorics}, 
Cambridge University Press, 2009.
\bibitem{Huttner} M. Huttner, {\em Constructible sets of linear differential equations and effective rational approximations of polylogarithmic functions}, Israel J. Math. {\bf 153} (2006), 1--43.
\bibitem{Kashiwara}
 M.~Kashiwara, {\em Systems of microdifferential equations},
 Progress in Mathematics, no.~34, Birkh\"auser, 1983.

\bibitem{Marcovecchio} R. Marcovecchio, {\em Linear independence of linear forms in polylogarithms}, 
Ann. Sc. Norm. Super. Pisa, Cl. Sci. (5) {\bf 5}.1 (2006), 1--11.

\bibitem{Matala-Aho}
T.~Matala-aho, {\em On {D}iophantine approximations of the solutions of
 $q$-functional equations}, Proc. Roy. Soc. Edinburgh Sect. A {\bf 132}
 (2002), 639--659.

\bibitem{nest} Yu. V. Nesterenko, {\em On the linear independence of numbers}, Mosc. Univ. Math. Bull.
{\bf 40}.1 (1985), 69--74; in russian in Vest. Mosk. Univ., Ser. I, no. 1, 108 (1985), 46--49.
\bibitem{norlund} N. E. N{\o}rlund, {\em Hypergeometric functions}, Acta Math. {\bf 94} (1955), 289--349.
\bibitem{oesterle} J. Oesterl\'e, {\em Polylogarithmes}, S\'eminaire Bourbaki : volume 1992/93, expos\'es 760-774, Ast\'erisque {\bf 216} (1992-1993), Expos\'e no. 762, 49--67. 
\bibitem{Pham}
 F. Pham, {\em Transform\'ees de Laplace des microsolutions de syst\`emes holonomes}, Enseign. Math. {\bf 30} (1984), 57--84.

\bibitem{rivoalcras} T. Rivoal, {\em La fonction z\^eta de Riemann prend une infinit\'e de valeurs irrationnelles aux entiers impairs}, 
CRAS Paris S\'er. I Math. {\bf 331}.4 (2000), 267--270.


\bibitem{ribordeaux} T. Rivoal, {\em Ind\'ependance lin\'eaire de valeurs des polylogarithmes}, J. Th\'eorie des Nombres de Bordeaux {\bf 15}.2 (2003), 551--559.

\bibitem{Shidlovski}
 A.~B. Shidlovsky, {\em Transcendental numbers}, de Gruyter
 Studies in Math., no.~12, de Gruyter, Berlin, 1989.
 \bibitem{siegel} C. Siegel, {\em \"Uber einige Anwendungen 
diophantischer Approximationen}, Abhandlungen Akad. Berlin 1929, no. 1, 70 S.
 \bibitem{Sorokincyclic} V. N. Sorokin, {\em Cyclic graphs and Ap\'ery's theorem}, Russian Math. Surveys {\bf 57}.3 (2002), 535--571; in 
 russian in Uspekhi Mat. Nauk {\bf 57}.3 (2002), 99--134.
\end{thebibliography}
\end{document}